\newcommand{\parval}{\lambda} 
\newcommand{\leftint}{a}
\newcommand{\rightint}{b}
\newcommand{\numT}{M_T} 
\newcommand{\inter}{\textrm{int}\;}
\newcommand{\plen}{\ell}
\newcommand{\dtuple}{\alpha}
\newcommand{\wmac}{h}
\newcommand{\dimR}{d}
\newcommand{\dvar}{\beta} 
\newcommand{\findex}{{i}} 
\newcommand{\dimindex}{{j}} 
\newcommand{\Tindex}{{m}} 
\newcommand{\Tindexd}{{\mathbf{m}}} 
\newcommand{\LipK}{K} 
\newcommand{\M}{M} 
\newcommand{\Midpt}{\widehat{m}}
\setlist[enumerate]{leftmargin=.5in}
\setlist[itemize]{leftmargin=.5in}
\crefname{hypothesis}{Hypothesis}{Hypotheses}
\title{Convergence of Markov Chains for Constant Step-size Stochastic Gradient Descent with Separable Functions\thanks{Submitted to the editors \today.
}}
\author{David Shirokoff\thanks{Department of Mathematical Sciences, New Jersey Institute of Technology, Newark, NJ
  (\email{shirokof@njit.edu}).} 
\and Philip Zaleski\thanks{Department of Mathematical Sciences, New Jersey Institute of Technology, Newark, NJ
  (\email{pz85@njit.edu}).}
}
\newcommand*{\addFileDependency}[1]{
  \typeout{(#1)}
  \@addtofilelist{#1}
  \IfFileExists{#1}{}{\typeout{No file #1.}}
}
\newtheorem{example}{Example}
\pgfplotsset{compat=1.18}
\begin{document}
\maketitle

\begin{abstract}
Stochastic gradient descent (SGD) is a popular algorithm for minimizing objective functions that arise in machine learning.  For constant step-sized SGD, the iterates form a Markov chain on a general state space.  Focusing on a class of separable (non-convex) objective functions, we establish a ``Doeblin-type decomposition,'' in that the state space decomposes into a uniformly transient set and a disjoint union of absorbing sets.  Each of the absorbing sets contains a unique invariant measure, with the set of all invariant measures being the convex hull. Moreover the set of invariant measures are shown to be global attractors to the Markov chain with a geometric convergence rate.  The theory is highlighted with examples that show: (1) the failure of the diffusion approximation to characterize the long-time dynamics of SGD; (2) the global minimum of an objective function may lie outside the support of the invariant measures (i.e., even if initialized at the global minimum, SGD iterates will leave); and (3) bifurcations may enable the SGD iterates to transition between two local minima.  Key ingredients in the theory involve viewing the SGD dynamics as a monotone iterated function system and establishing a ``splitting condition'' of Dubins and Freedman 1966 and Bhattacharya and Lee 1988.        
\end{abstract}

\begin{keywords}
    Stochastic gradient descent, Diffusion approximation, Doeblin-type decomposition, Markov chains, Spectral gap, Constant step-size, Bifurcations, Iterated function systems
\end{keywords}

\begin{AMS}
  68W20, 68W40, 37A30, 60J20
\end{AMS}

\section{Introduction}\label{Sec:Introduction}
In recent years, stochastic gradient descent (SGD) \cite{RobbinsMonro1951} has become an immensely popular algorithm for minimizing objective functions $F: \mathbb{R}^\dimR \xrightarrow{} \mathbb{R}$ of the form 
\begin{equation}
\label{F}
    F(x)= \frac{1}{n} \sum_{\findex=1}^n f_\findex(x) \, , \qquad \textrm{where} \qquad 
    f_\findex : \mathbb{R}^\dimR \xrightarrow{} \mathbb{R} \, \qquad (f_\findex \neq 0)\, .
\end{equation}
Define the maps $\varphi_\findex : \mathbb{R}^\dimR \xrightarrow{} \mathbb{R}^\dimR$ as
\begin{equation}\label{Eq:SGD_Maps}
    \varphi_\findex(x) :=x - \eta \nabla f_{\findex} (x) \qquad (1 \leq \findex \leq n) \, ,
\end{equation}
where the parameter $\eta > 0$ is the \emph{step-size} (or \emph{learning rate}). 

In its simplest form, the constant step-size SGD generates a random sequence of iterates 
$\{X_0, X_1, X_2, \ldots \}$ via the following dynamics:

Given $X_0 \in \mathbb{R}^\dimR$, sampled from an initial distribution $\mu_0$, update
\begin{align}\label{Eq:SGDIterates}    
    X_{k+1} &= \varphi_{\findex_k}(X_k)     \, \qquad \textrm{where} \qquad \findex_k \in \{ 1,2, 3, \ldots, n\} =: [n] 
\end{align}
is drawn independently and identically from a uniform distribution (i.i.d.); each map $\varphi_i$ has probability $1/n$ of being applied in \eqref{Eq:SGDIterates}. 

Since the values of $\findex_k$ are i.i.d., the random variable $X_{k+1}$ satisfies the Markov property and the sequence $\{X_k\}_{k \geq 0}$ forms a Markov chain taking on values in the \emph{general state-space} $\mathbb{R}^\dimR$ (see \cite{Hairer2021, Meyn_Tweedie} for background on Markov chains in general state spaces). 

Associated to each $X_k$ is the corresponding law of probability $\mu_k \in \mathscr{P}(\mathbb{R}^{\dimR})$ characterizing the probability distribution of $X_k$
\begin{align*}
    \mu_k(A)  := \textrm{Prob}(X_k \in A)  \qquad (A \in \mathcal{B}(\mathbb{R}^\dimR)) \, . 
\end{align*}
Here $\mathscr{P}(\mathbb{R}^{\dimR})$ is the space of probability measures over the Borel $\sigma$-algebra $\mathcal{B}(\mathbb{R}^\dimR)$.

The probability laws for a Markov chain then evolve via deterministic linear dynamics according to a Markov operator 
\begin{align}\label{Eq:MarkovOperatorDynamics}
    \mu_{k+1} = \mathcal{P} \mu_k \, .
\end{align}
For general state spaces, the Markov operator is defined as
\begin{equation}\label{Eq:KernelEvolution}
    (\mathcal{P} \mu)(A) := \int_{\mathbb{R}^\dimR} p(x, A ) \, d \mu(x)   \qquad (A \in \mathcal{B}(\mathbb{R}^\dimR))
\end{equation}
where $p: \mathbb{R}^\dimR \times \mathcal{B} (\mathbb{R}^\dimR) \xrightarrow{} [0,1]$ is the \emph{transitional kernel}
\begin{align*}
    p(x,A) := \textrm{Prob}\left( X_{k+1} \in A  \; | \;  X_k = x \right) \, . 
\end{align*}    
For each $x \in \mathbb{R}^{\dimR}$, $p(x, \cdot)$ is a probability measure, while for each Borel set $A$, $p(\cdot, A)$ is a measureable function so that  $\mathcal{P}$ is well defined to act on probability measures (or more generally finite measures).

Intuitively, the value of $p(x,A)$ measures the probability that the Markov chain transitions from a point $x$ into the set $A$ (which is the infinite dimensional analogue to matrix elements of a Markov matrix).  Hence, for the SGD Markov chain \eqref{Eq:SGDIterates}, $p$ is the fraction of maps $\varphi_\findex$ that map $x$ into $A$
\begin{equation}\label{Def:TransitionKernel}
    p(x,A) = \frac{1}{n} \sum_{\findex=1}^n \chi_{A}\big( \varphi_\findex(x) \big) \, ,    
\end{equation}
where $\chi_A$ is the characteristic function of the set $A$. 

For SGD \eqref{Eq:SGDIterates}, $\mathcal{P}$ then takes the form
\begin{align}\label{Eq:SGD_Markov_Operators}
    (  \mathcal{P} \mu)(A) = \frac{1}{n} \sum_{\findex= 1}^n \mu\left( \varphi_\findex^{-1}(A) \right) \, , 
\end{align}
where $\varphi_\findex(x)$ are defined in \eqref{Eq:SGD_Maps} and $\varphi_\findex^{-1}(A)$ is the preimage of $A$. 

A probability measure $\mu^\star$ is invariant (or stationary) with respect to the operator $\mathcal{P}$ if 
\begin{equation}
      \mathcal{P} \mu^\star = \mu^\star \, .
\end{equation}

The purpose of this work is to establish the convergence of the probability measures $\mu_k$ for constant step-size SGD.  We restrict our attention to separable (but non-convex) objective functions $F$ and $f_\findex$. This has the advantage of enabling a simple, yet relatively complete, characterization of the how the probability measures $\mu_k$ converge to a convex combination of the extreme points of the invariant measures.  Our results make use of techniques from the theory of \emph{iterated function systems} for monotone maps. A technical part of the proofs involves verifying a splitting condition \cite{Dubins66, Bhattacharya88} associated to Markov operators for these maps.

Analyzing the exact dynamics defined by the operator $\mathcal{P}$ then enables a rigorous study of bifurcations.  In particular, we provide an exact bifurcation study of whether iterates of SGD escape a local minima of $F$ --- contradicting predictions inferred by the diffusion approximation.

\subsection{Background on SGD with Vanishing \texorpdfstring{$\eta_k \rightarrow 0$}{Lg} 
 } 
An intuitive motivation for the dynamics \eqref{Eq:SGDIterates} is that the expectation of $X_{k+1}$ is a gradient step of $F$ evaluated at $X_k$, i.e., 
{\begin{align}\label{Eq:ExpStep}
    \mathbb{E} (X_{k+1}|X_k) = X_k - \eta \nabla F(X_k) \, .    
\end{align}}
Equation \eqref{Eq:ExpStep} implies that on average, one expects the iterates $X_k$ to move towards local minima of $F(x)$.  This observation can be made precise and has lead to a significant body of work on SGD in the small $\eta \ll 1$ step-size limit.  When the step-size $\eta$ in \eqref{Eq:SGDIterates} is {vanishing}, that is, $\eta$ is replaced with $\eta_k$ with $\eta_k \rightarrow 0$ as $k \rightarrow \infty$, it is well-established that the iterates $X_k$ converge the dynamics of the ordinary differential equation 
\begin{align}\label{Eq:ODE}
    \dot{x} = -\nabla F(x) \, . 
\end{align}
For instance, see \cite{Duflo1996}, \cite[Proposition 4.1--4.2]{Benaim2006} and \cite[Chapter 2]{Borkar2008} (and references within). Closely related to the asymptotic trajectory \eqref{Eq:ODE} are a range of results establishing that $X_k$ converges (almost surely) to minimizers of $F$, e.g. \cite{Wojtowytsch2023a, DereichKassing2024} for functions $F$ satisfying a Łojasiewicz inequality in lieu of convexity.


\subsection{Background for SGD with Constant \texorpdfstring{$\eta$}{Lg}}\label{Subsec:SGD_Background_ConstEta}
While much is known about SGD with {vanishing} step-sizes {($\eta_k \rightarrow 0$)}, {there are still many open questions} 
in the constant step-size setting when $F$ is non-convex.  In particular, regarding \eqref{Eq:SGDIterates}: Does the Markov chain $X_k$ remain trapped in an energy well of $F$? Or explore all minima of $F$? And if so, over what time-scales?  Answers to these questions may help to provide insight into the {early phase of SGD with vanishing step-sizes.} 

The difficulty in establishing a general theory for either the Markov chain $X_k$, or the associated probability laws $\mu_k$, is highlighted by the generality of SGD.  For instance, the dynamics \eqref{Eq:SGDIterates} include, as special cases:
\begin{itemize}
    \item All continuous $1$-dimensional deterministic iterative maps. This includes both the Logistic map and Tent map. For instance, take $n=1$ and $F(x) = -\frac{3}{2}x^2 + \frac{4}{3}x^3$ so that $X_{k+1} = 4 X_k (1- X_k)$ when $\eta = 1$. Varying $\eta \in (0, 1]$, the iterates $X_k$ exhibit the classic behavior of period doubling and the emergence of chaos;
    \item Random walks, e.g., set $F(x) = 0$ with $f_1(x) = x$ and $f_2(x) = -x$;
    \item Infinite Bernoulli convolutions, which up to a linear change of variables, have the form $F(x) = x^2$, with $f_1(x) = (x-1)^2$, $f_2(x) = (x + 1)^2$.  Erd\H{o}s \cite{Erdos1939} showed that in this quadratic setting the corresponding invariant measures may be singular.  Quadratic models have also found recent applications in biological settings \cite{CountermanLawley2021}, and remain an area of research in dynamical systems \cite{Feng2003, Benjamini2009, Jordan2011, Kempton2015, Bandt2018, Batsis2021}.    
    \item {Deterministic gradient descent with constant step-size. This includes settings where the objective function may have multiples length scales \cite{KongTao2020}: If the smallest length scale is under-resolved (i.e., much smaller than $\eta$), the dynamics can be approximated as gradient descent with stochastic forcing and exhibit chaos.}
\end{itemize}
In light of the (extremely) broad class of dynamics given by \eqref{Eq:SGDIterates}, we focus on separable $f_\findex$.

Continuous time partial differential equations (PDEs) provide one approach for approximating the evolution \eqref{Eq:MarkovOperatorDynamics}. Treating $\eta \ll 1$ as a small parameter, \eqref{Eq:ODE} can be viewed as a leading order approximation to the dynamics for $X_k$; this also yields a corresponding advection PDE approximation for \eqref{Eq:MarkovOperatorDynamics}.  Truncating formal expansions of the operator $\mathcal{P}$ at progressively higher orders in the asymptotic parameter $\eta$ \cite{LiTaiE2017} then yield successive improvements. For instance, a second order expansions in $\eta$ to $\mathcal{P}$ yields a \emph{diffusion approximation} to \eqref{Eq:MarkovOperatorDynamics} (see \S\ref{sec:DiffApprox}). The diffusion approximation is known to accurately describes the evolution of $\mu_k$ for finite time \cite{LiTaiE2017, HuLiLiLiu2017, FengLiLiu2018}, and infinite time when $F$ is convex \cite{FengGaoLiLiuLu2020}.  The diffusion approximation has also been used to gain insight into SGD dynamics and (in the regime for which it is valid) can estimate the probability distribution of $X_k$ near the minimum of $F$ (cf. \cite{NIPS2014_f29c21d4} for estimates and convergence rates when $f_\findex$ are convex).  As we discuss below, the diffusion approximation can fail significantly to capture the correct long-time dynamics such as the number of, and regularity of, invariant measures (cf. \cite{McCann2021}). 

Variants of the diffusion equation have also been used to model SGD dynamics \cite{mandt2015continuous,chaudhari2018deep}. Rigorous PDE theory, such as the existence and geometric convergence to the invariant measure, have also been established for diffusion equation models \cite{Wojtowytsch2023b}.  Partial differential equation models can also arise from SGD as limiting dynamics where the asymptotic parameter is the dimension (not $\eta$!) \cite{ArousGheissariJagannath2022}. 

One approach to establish convergence of $\mathcal{P}^k \mu$ to a unique invariant measure (on a state space $X$), is through a Doeblin condition of the form
\begin{align}\label{Eq:Doeblin}
    p(x, A) \geq \epsilon \nu(A) \qquad \qquad \textrm{for all} \; A \in \mathscr{B}(X), \; x \in X
\end{align}
holds for some $\nu \in \mathscr{P}(X)$, $\epsilon > 0$. 
Some algorithmic variants of SGD, such as those that add random noise at each step (e.g., \emph{stochastic gradient Langevin dynamics}), or make strong assumptions on $f_i$ which mimic random noise (e.g., \cite{NEURIPS2021_21ce6891}), closely resemble a stochastic ODE. In these cases, conditions such as \eqref{Eq:Doeblin} may be verified to prove convergence to a unique invariant measure.  The version of SGD \eqref{Eq:SGDIterates}, in general, does not satisfy \eqref{Eq:Doeblin} (even when $X$ is restricted to the absorbing sets).


Our main result establishes the convergence of the exact Markov chain dynamics \eqref{Eq:MarkovOperatorDynamics}. We emphasize that we do not make use of concepts that often arise in Markov chains with random noise, e.g., $\varphi$-irreducibility (which is a notion of irreducibility for general state spaces), detailed balance/reversibility, or \eqref{Eq:Doeblin}. We also make no continuous time approximation. 
 Rather, the starting point is to view \eqref{Eq:SGDIterates} as a random \emph{iterated function system} (IFS).  We then show that the SGD dynamics \eqref{Eq:SGDIterates} satisfy the \emph{splitting} conditions \cite{Dubins66, Bhattacharya88} that guarantees convergence to an invariant measure for IFS with monotone maps.  

\subsection{Background on Iterated Function Systems}
There is a long history of iterated function systems theory in dynamical systems --- notably as means to construct fractals.  Two broad approaches to establish existence and convergence to invariant measures in IFS are: (i) to show that the maps $\varphi_\findex$ satisfy a contraction condition, or (ii), which is the approach we adopt, to show that the maps are monotone and satisfy a splitting condition.  For example, when $\varphi_\findex$ are contractions (which can occur when $f_\findex$ is convex and $\eta$ is constant but small enough), a result by Hutchinson \cite[Section 4.4, Theorem 1]{Hutchinson81} shows that $\mathcal{P}$ is also a contraction on the space of probabilities and $\mu_k$ converges geometrically to a unique invariant measure.  

Convergence results for IFS have also been generalized to allow for weaker notions of contractivity \cite{Diaconis99, Gelfert2021, Herkenrath2007, Myjak2003, Steinsaltz99, Stenflo2012, Wu2004}. Contraction conditions have recently been used to establish convergence in specific instances (e.g., when $f_\findex$ are convex) of SGD \cite{GuptaHaskell2021, GuptaChenPiTendolkar2020}.  Unfortunately, contractivity conditions can be difficult to verify (or possibly fail) in practical settings. For instance, when $f_\findex$ is non-convex, the map $\varphi_\findex$ (for small $\eta$) is never contractive in the standard Euclidean norm.  Since we are particularly interested  in settings where $f_\findex$ and $F$ are non-convex we avoid the use of contractions and instead make use of monotone maps \cite{Dubins66, Bhattacharya88,Yahav1975,Bhattacharya99}.

\subsection{Contributions and Organization of the Paper} 
For constant step-size SGD with separable (non-convex) functions, the main contributions of the paper are: 
\begin{itemize}
    \item A decomposition of the Markov chain state space for \eqref{Eq:MarkovOperatorDynamics} into a uniformly transient set and disjoint union of absorbing sets --- each supporting a unique invariant measure (these are the extreme points in the set of invariant measures);
    \item The set of invariant measures are attractors with $\mathcal{P}^k\mu$ converging  geometrically to a convex combination of the extreme points; 
    \item Bounds on the number and support of the invariant measures;
    \item Examples demonstrating the failure of the diffusion approximation; that the invariant measures may be supported outside a neighborhood of the global minima of $F$; and a rigorous bifurcation study of the dynamics \eqref{Eq:MarkovOperatorDynamics}.
\end{itemize}
The manuscript starts in \S\ref{Sec:MainResult} by introducing the assumptions on $f_i$ and presenting the main result. Examples are then provided in \S\ref{Sec:Examples1d}.  The remainder of the paper is devoted to establishing the main result: \S\ref{Sec:MathBackground} introduces the required mathematical notation while \S\ref{Sec:MarkovCvg} reviews convergence theory for Markov operators with monotone maps $\varphi_i$. Several theoretical results for SGD in one dimension are established in \S\ref{Sec:Lemmas1d_SGD}, which are then expanded on in a non-trivial way to prove the main result in \S\ref{Sec:ProofsMainResult}.


\section{Main Result}\label{Sec:MainResult} 
In this section we introduce the assumptions and main result.

\subsection{Assumptions and Problem Setting}
\label{subsec:Assumptions}
Throughout, we assume that each of the functions $f_\findex$ (and consequently $F$) are \emph{separable}, that is, they are the sum of single variable functions $f_\findex^{(\dimindex)} : \mathbb{R} \rightarrow \mathbb{R}$ 
\begin{align}\label{Def:Separable}
    f_\findex(x) = \sum_{\dimindex=1}^{\dimR} f_{\findex}^{(\dimindex)}(x_\dimindex) 
    \qquad \textrm{where} \qquad 
    x = (x_1, \, x_2, \, \ldots, \, x_{\dimR} ) \in \mathbb{R}^{\dimR} \, .
\end{align}
With this convention, $f_\findex^{(\dimindex)}$ is allowed to be $0$, however to avoid trivialities $f_\findex \neq 0$. 

For functions of the form \eqref{Def:Separable}, the maps $\varphi_\findex: \mathbb{R}^d \rightarrow \mathbb{R}^d$ become
\begin{equation}\label{Eq:SepVarphi}
   \varphi_\findex(x) = \begin{pmatrix}  \varphi_\findex^{(1)}(x_1), &  \varphi_\findex^{(2)}(x_2), & \ldots\;, & \varphi_\findex^{(d)} (x_d)
    \end{pmatrix}     
\end{equation}
where each of the components of $\varphi_\findex$ is a single variable function of $x_\dimindex$
\begin{align*}    
    \varphi_\findex^{(\dimindex)}(s) = s - \eta \frac{d}{ds}f_\findex^{(\dimindex)}(s) \qquad (1 \leq \dimindex \leq \dimR) \, . 
\end{align*}

For all non-zero $f_\findex^{(\dimindex)}$ $(\findex = 1, \ldots, n$;  $\dimindex = 1, \ldots, \dimR)$ we assume that
\begin{enumerate}[itemsep=5pt, topsep=5pt, label=(A{\arabic*})]
    \item $f_\findex^{(\dimindex)}$ is continuously differentiable, i.e., $f_\findex^{(\dimindex)} \in C^{1}(\mathbb{R})$. \label{A1}     
    \item $f_\findex^{(\dimindex)}$ has a finite number of critical points. \label{A2} 
    \item $f_\findex^{(\dimindex)}(x) \rightarrow \infty $ as $|x|\rightarrow \infty$. \label{A3}
\end{enumerate}
For each family of functions $\{f_\findex^{(\dimindex)}\}_{\findex=1}^{n}$ the set of critical points 
\begin{align*}
    \mathcal{C}^{(\dimindex)} = \bigcup_{\findex=1}^n \left\{ x \in \mathbb{R} \; : \; \frac{d}{dx}f_\findex^{(\dimindex)}(x) = 0, \; f_\findex^{(\dimindex)} \neq 0 \right\}   
\end{align*}
is then finite and contained in an interval of the form
\begin{align}
\label{I_state_space}
    I^{(\dimindex)} := [ \leftint, \rightint ] \, ,
\end{align}
where we define $\leftint$ and $\rightint$ to be the smallest and largest elements of $\mathcal{C}^{(\dimindex)}$.  With this notation, the general \emph{state space} will be
\begin{align}\label{Def:Statespace}
    I := I^{(1)} \times I^{(2)} \times \ldots \times I^{(d)} \subset \mathbb{R}^{\dimR} \,.
\end{align}
We further assume that
\begin{enumerate}[itemsep=5pt, topsep=5pt, label=(A{\arabic*})]
    \setcounter{enumi}{3}
    \item\label{A4} The derivatives of $f_\findex^{(\dimindex)}$ are $\LipK$-Lipschitz on $I^{(\dimindex)}$, i.e., for some $K > 0$
    \begin{align*}
        \left|\dfrac{d}{dx}f_\findex^{(\dimindex)}(x) - \dfrac{d}{dy}f_\findex^{(\dimindex)}(y) \right| \leq \LipK \,  |x - y| \qquad \textrm{for} \quad x,y \in I^{(\dimindex)}\; \textrm{and} \; 1 \leq \findex \leq n \, . 
    \end{align*}    
    \item\label{A5} (\emph{Inconsistent optimization}) For each $1 \leq \dimindex \leq \dimR$ the functions $f_\findex^{(\dimindex)}$ share no common critical point, i.e., 
        \begin{align*}
            \bigcap_{\findex = 1}^n \left\{x_\dimindex \in \mathbb{R} \: : \;\frac{d}{dx}f_\findex^{(\dimindex)}(x_\dimindex) = 0 \right\} = \phi \, . 
        \end{align*}
\end{enumerate}

The assumptions \ref{A1}--\ref{A4} (or minor variations of) are standard in the gradient descent optimization literature. Together they ensure that when $\eta < \LipK^{-1}$ each $\varphi_\findex^{(\dimindex)}$ is an increasing function --- a property we use in the proofs.  Condition \ref{A2} is made for simplicity to rule out complications when $f_\findex$ admits an infinite number of critical points, however the condition can be relaxed, for instance some of the theory we present generalizes to allow for some $f_\findex$ to be constant on an interval.

Assumption \ref{A5} is sometimes referred to as \emph{inconsistent optimization} since it implies the $f_\findex$ do not share a common minimizer.  Condition \ref{A5} also implies that for each $\dimindex$ at least two functions in the set $\{ f_\findex^{(\dimindex)} \}_{\findex=1}^{n}$ are non-zero (otherwise \ref{A5} fails trivially).

While it may appear that \ref{A5} is overly simplifying, it is necessary to establish both convergence in a (strong) metric, and  uniform geometric convergence rates in \cref{Main_thm_basin}.  When Assumption \ref{A5} is removed, the convergence theory we build on from \cite{Bhattacharya88, Dubins66} no longer applies as stated.

The necessity of Assumption \ref{A5} in the main result is highlighted with the simple example: set $n = 1, \dimR = 1$ and $F(x) = x^2$. Taking $\eta = \frac{1}{2}$, the SGD dynamics revert to deterministic gradient descent $X_{k+1} = \frac{1}{2} X_k$.  If $X_0 = 1$, so that $\mu_0 = \delta_{1}$ is a Dirac mass, then $\mu_k \rightarrow \delta_0$ converges weakly as $k \rightarrow 0$, but does not converge in the metric used in the main result.

A practically relevant setting where convergence is established without the assumption \ref{A5} occurs when each $f_\findex$ is convex (not necessarily separable) and shares a simultaneous critical point $x^*$. Then $\delta_{x^*}$ is an invariant measure of $\mathcal{P}$, and an application of Hutchinson \cite{Hutchinson81} can directly be used to show an initial measure $\mu_0$ converges geometrically to $\delta_{x^*}$ in the Wasserstein metric.  If $x^*$ is a local minimum for some $f_\findex$ and a saddle or local maximum for others, then general sufficient conditions for convergence of $\mu_k$ to $\delta_{x^*}$ are more subtle. 

\subsection{Main Result}\label{sec:MainResult}
In this section we outline our main result; some technical definitions are deferred to \S\ref{Sec:MathBackground}.

Our starting point is to define formulas for disjoint closed rectangles $T_{\Tindexd}$ that, as we will show, are \emph{absorbing sets}. With positive probability the SGD dynamics $X_k$ will reach one of these sets within a finite number of steps. Each $T_\Tindexd$ will contain exactly one invariant measure for the SGD Markov chain. 

The first step is to define disjoint closed intervals $T_\Tindex$ in dimension $\dimR = 1$ in terms of sets $L$ and $R$ characterizing left and right moving dynamics. For notational brevity we drop the superscripts in $f_\findex^{(1)}$ and $\varphi_\findex^{(1)}$ in this $\dimR = 1$ setting.

When $d=1$, the maps $\varphi_\findex$ have the property that, for all $\eta > 0$,
\begin{align*}
    \varphi_\findex(x) > x \quad \textrm{if} \quad f_\findex'(x) < 0 \, 
    \qquad \textrm{and} \qquad 
    \varphi_\findex(x) < x \quad \textrm{if} \quad f_\findex'(x) > 0 \, .
\end{align*}
Hence, $\varphi_\findex$ maps the point $x$ to the left when $f_\findex'(x) > 0$ and to the right when $f_\findex'(x) < 0$; $\varphi_\findex(x) = x$ is a fixed point when $f_\findex'(x) = 0$.  This motivates defining the following \emph{left} and \emph{right} sets as
\begin{equation}
\label{L_def}
    L := \bigcup_{\findex=1}^n \left\{ x \in \mathbb{R} \; : \; f'_\findex(x) > 0 \right\} \, , 
\end{equation}
and 
\begin{equation}
\label{R_def}
    R := \bigcup_{\findex=1}^n \left\{ x \in \mathbb{R} \; : \; f'_\findex(x) < 0 \right\} \, . 
\end{equation}
Note that if $X_k \in L$ then there is a non-zero probability that the SGD iterate can move to the left, e.g., $X_{k+1} <  X_k$ with positive probability (an analogous result holds for $X_k \in R$).  The sets $L$ and $R$ also characterize when the SGD dynamics move to the left or right with probability one:  A point $x \notin L$ if and only if 
\begin{align}\label{not_inL}
    \varphi_\findex(x) \geq x \, , \qquad \textrm{for all }  1 \leq \findex \leq n \, ,
\end{align}
and $x \notin R$, if and only if 
\begin{align}\label{not_inR}
    \varphi_\findex(x) \leq x \, , \qquad \textrm{for all }  1 \leq \findex \leq n \, .
\end{align}
Several properties of $L$ and $R$ are established in \S~\ref{Sec:PropertiesLRT}.  We now define the sets $T_\Tindex$. 

\begin{definition}[The sets $T_\Tindex$]\label{Def:SetsTj} For a collection of functions $\{f_i\}_{i=1}^n$ in dimension $\dimR = 1$ with $L$ and $R$ given in \eqref{L_def}--\eqref{R_def}, define the sets $T_\Tindex$ to be closed intervals $[l, r]$ ($l < r$) satisfying
    \begin{align*}
        (l, r) \subset L \cap R \, 
    \end{align*}   
    where $l \in \partial L$, $r \in \partial R$.     
    Let $\numT$ be the number of such sets and denumerate them as 
    \begin{align*}
        T_\Tindex = [l_\Tindex, r_\Tindex] \qquad \Tindex = 1, \ldots, \numT \, .
    \end{align*}        
\end{definition}
 
Intuitively, the $T_\Tindex$'s are constructed first by taking the intersection $L$ with $R$ and keeping only the intervals for which $l \in \partial L$ and $r \in \partial R$.  In the subsequent theorem, the definition of $T_\Tindex$ as the closure of $(l, r)$ accounts for cases where $f^\prime_\findex$ may fail to change sign on either side of a critical point. 

\cref{Prop:FiniteTs} in \S\ref{Sec:PropertiesLRT} will establish that the sets $T_\Tindex$ exist $(\numT \geq 1)$, are disjoint, and without loss of generality can be ordered, e.g., $r_\Tindex < l_{\Tindex+1}$ for $1 \leq \Tindex \leq \numT - 1$.

Building on the one-dimensional setting, we extend the definition of the sets $T_\Tindex$ to the multivariate case of separable functions as follows. 
 For each $1 \leq \dimindex \leq \dimR$, define the sets 
\begin{align*}
     T_1^{(\dimindex)} \, , \; T_2^{(\dimindex)} \, , \; \ldots \,, \; T_{\M_\dimindex}^{(\dimindex)} \,, 
\end{align*}
by applying the \cref{Def:SetsTj} to the family of functions $\left\{f_\findex^{(\dimindex)} \right\}_{\findex=1}^n$, where now we denote the number of such sets by $\M_\dimindex$ (instead of $\numT$).

Let $\mathcal{{\M}}:= [\M_1] \times  [\M_2] \hdots \times [\M_d]$ be a subset of integer tuples.  For each \newline $\Tindexd=(\Tindex_1,\Tindex_2, \hdots, \Tindex_d)  \in \mathcal{\M}$ we then define the rectangles $T_{\Tindexd}$ (see \cref{Fig_absorbing_rect}) which are the multivariate generalizations of $T_\Tindex$ as
\begin{equation}\label{Eq:DefT_Rd}
    T_{\Tindexd} :=  T_{\Tindex_1}^{(1)} \times T_{\Tindex_2}^{(2)} \hdots \times T_{\Tindex_d}^{(d)} \subset \mathbb{R}^{\dimR} \, .
\end{equation}

    The union of all such sets is
    \begin{equation}\label{Eq:T_Union}
        T :=  \bigcup_{ \Tindexd \in \mathcal{\M}}   T_{\Tindexd} \subset \mathbb{R}^{\dimR} \,. 
    \end{equation}    
\begin{figure}[htb!]
      \centering
  \includegraphics[scale=.7]{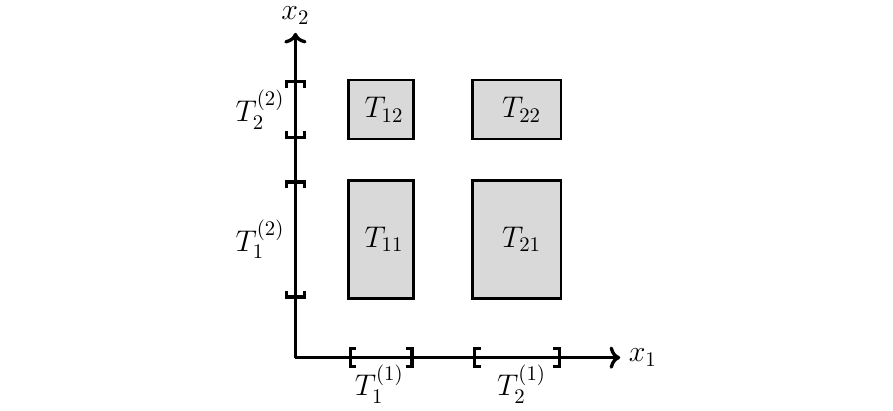}
    \caption{Sketch of the rectangles $T_{\boldsymbol{m}}$.}\label{Fig_absorbing_rect}
 \end{figure}
 
In the subsequent proofs, it will also be useful to define the following subsets in $\mathbb{R}$ for each variable $x_\dimindex$
\begin{equation*}
     T^{(\dimindex)} :=  \bigcup_{\Tindex=1}^{\M_\dimindex} T_\Tindex^{(\dimindex)}  \subset \mathbb{R} \, ,
\end{equation*}
With this notation, the set $T \in \mathbb{R}^d$ admits an alternative form
\begin{equation}
    \label{T_Rd}
    T= T^{(1)} \times T^{(2)} \hdots \times T^{(\dimR)}.
\end{equation}

We can now state the main result. 
\begin{theorem}[Main Result]\label{Main_thm_basin} Given the Markov chain \eqref{Eq:MarkovOperatorDynamics} and \eqref{Eq:SGD_Markov_Operators} corresponding to the SGD dynamics \eqref{F}--\eqref{Eq:SGDIterates}, assume \ref{A1}--\ref{A5} hold. Let $I$ and $T_{\Tindexd}$ be defined as in \eqref{Def:Statespace} and \eqref{Eq:DefT_Rd}, and let $\eta$ be any value $0 < \eta < 1/\LipK$.  Then,
\begin{enumerate}[label=(\alph*) ]
    \item \label{Main_a} The statespace $I$ is positive invariant and decomposes into the disjoint union 
        \begin{align*}
            I = B \cup \left( \bigcup_{ \Tindexd \in \mathcal{\M}} T_{\Tindexd} \right)  \, \qquad (B := I \setminus T) \, ,
        \end{align*}
        where 
        \begin{enumerate}[label=(\roman*)]
            \item \label{Main_a_i} $T_\Tindexd$ ($\Tindexd \in \mathcal{\M}$) is positive invariant/absorbing, and contains at least one local minimizer of $F$.  The number of $T_{\Tindexd}$ is at most the number of local minima of $F$.
            \item \label{Main_a_ii} $T$ is non-empty and there exists $\ell_0 = \ell_0(\eta)$ such that for every $x \in I$ there is a path $\vec{p}$ of length $\ell_0$ satisfying $\varphi_{\vec{p}\,}(x) \in T$.
            %
            \item \label{Main_a_iii} $B$ is uniformly transient with
            \begin{align}\label{Eq:Mass_On_B}
                ({\mathcal{P}}^{\ell_0} \mu) (B) \leq \left( 1 - \frac{1}{n^{\ell_0}} \right) \, \mu(B) \, ,      
            \end{align}         
            for any probability measure $\mu \in \mathscr{P}(I)$. 
      \end{enumerate}
    \item \label{Main_b} For each $\Tindexd \in \mathcal{\M}$, there exists a unique invariant measure $\mu_\Tindexd^\star$ with support contained in $T_\Tindexd$.
    Moreover, there exist an $\ell_{\Tindexd} \in \mathbb{N}$, {$\ell_{\Tindexd} \geq \dimR$} and $\alpha_{\Tindexd} \in \{+1, -1\}^{\dimR}$ such that for any $\mu$ with support contained in $T_\Tindexd$, we have 
    \begin{align} 
        \label{Con_Tr_Rd}
            \begin{split}
                d_{\dtuple_{\Tindexd}} \left( {\mathcal{P}}^{k} \mu, \mu_{\Tindexd}^\star \right) 
                 \leq \left( 1- \frac{1}{n^{\ell_\Tindexd}}\right)^{\lfloor k/\ell_\Tindexd \rfloor} \qquad\qquad k > 0 \, . 
        \end{split}
    \end{align}       
    In the case of $\dimR = 1$, the convergence \eqref{Con_Tr_Rd} is a (stronger) contraction
    \begin{align}\label{Eq:CvgInTj}
            \begin{split}
                d_F \left( {\mathcal{P}}^{\ell_{\Tindex} } \mu, \mu_\Tindex^\star \right) 
                \leq \left( 1- \frac{1}{n^{\ell_\Tindex}}\right) \, d_F(\mu, \mu_\Tindex^\star)     \, .
        \end{split}
    \end{align}
    Here the metrics $ d_{\dtuple_{\Tindexd}} $ and $d_F$ are defined in \cref{Sec:MathBackground} and $\lfloor \cdot \rfloor$ is the greatest integer (floor) function.
    The measures $\mu_{\Tindexd}^\star$ are the only invariant measures supported in $I$, and the number $\numT$ is bounded by the number of local minima of $F$. 


     \item \label{Main_result_d} For any probably measure $\mu_0 \in \mathscr{P}(I)$, $\mu_k := {\mathcal{P}}^{k} \mu_0$ converges geometrically to the invariant measure {
    \begin{align*}
        \mu^{\star} = \sum_{\Tindexd \in \mathcal{\M}} c_{\Tindexd}(\mu_0) \, \mu_{\Tindexd}^{\star} \qquad
        \textrm{where} \qquad c_{\Tindexd}(\mu_0) = \int_{I} g_{\Tindexd}(x) \, \textrm{d}\mu_0(x) \,. 
    \end{align*}    
    Here $\{ g_{\Tindexd}\}_{\Tindexd \in \mathcal{M}}$ exist and are the unique set of functions satisfying the following:    
    \begin{enumerate}[label=(\roman*)]
        \item\label{Main_d_i} Each $g_{\Tindexd} : I \rightarrow \mathbb{R}$ is a left eigenvector of $\mathcal{P}$, i.e., $\mathcal{P}^{\star} g_{\Tindexd} = g_{\Tindexd}$.
        \item\label{Main_d_ii} They form a partition of unity for the set $I$, i.e., the $g_{\Tindexd}$ are continuous, non-negative functions with $\sum_{\Tindexd \in \mathcal{M}} g_{\Tindexd}(x) = 1$ for each $x \in I$.
        \item\label{Main_d_iii} The functions satisfy $g_{\Tindexd}(x) = 1$ for all $x \in T_{\Tindexd}$, and zero on $T_{\Tindexd'}$ for $\Tindexd' \neq \Tindexd$. In particular, $\int_{I} g_{\Tindexd}(x) \, \textrm{d}\mu^{\star}_{\Tindexd'}(x) = \delta_{\Tindexd \Tindexd'}$. 
    \end{enumerate}}
    Convergence of $\mu_k \rightarrow \mu^{\star}$ is in the sense that    
    \begin{equation}
    \label{Eq:MainCvgMetric}
        \Tilde{d}(\mu_k, \mu^\star) \leq 3 \left( 1- \frac{1}{n^{\ell}}\right)^{ \lfloor k/\ell \rfloor}  \qquad\qquad k > 0 \, .
    \end{equation}
    Here $\ell := 2 \, ( \ell_0 \vee \max_{\Tindexd \in \mathcal{\M}} \ell_\Tindexd) \, {\geq \dimR}$, and the metric $\tilde{d}$ is defined on probability measures $\mu, \nu \in \mathscr{P}(I)$ as 
            \begin{align} 
 \label{d_big_def}
           \tilde{d}(\mu,\nu) :=   
                     d_{\rm TV}\big( \mu|_B, \nu|_B\big) +  \sum_{\Tindexd \in \mathcal{\M} }  
                    d_{\alpha_{\Tindexd}}\big(   \mu|_{T_{\Tindexd}},    
                \,  \nu|_{T_{\Tindexd}}\big)  \, 
                  \, ,
        \end{align}    
       where $\nu\big|_{A}$ is the restriction of the finite measure $\nu$ to a Borel set $A$. 
    Lastly, in the case where $d=1$, \eqref{Eq:MainCvgMetric} becomes 
    \begin{equation*}
        d_F(\mu_{k }, \mu^\star) \leq 3 \left( 1- \frac{1}{n^{\ell}}\right)^{\lfloor k/\ell \rfloor}  \qquad\qquad k > 0 \, .
    \end{equation*}
    \end{enumerate}    
\end{theorem}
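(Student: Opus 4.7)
My plan is to build the three parts in sequence, reducing everything to the one-dimensional monotone IFS framework and then combining the pieces through the product structure of separable problems.

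For part \ref{Main_a}, I would first work in dimension $\dimR=1$, establishing elementary geometric properties of $L$, $R$, and the intervals $T_\Tindex=[l_\Tindex,r_\Tindex]$: the conditions $l_\Tindex\in\partial L$ and $r_\Tindex\in\partial R$ together with \eqref{not_inL}--\eqref{not_inR} force $\varphi_\findex(l_\Tindex)\ge l_\Tindex$ and $\varphi_\findex(r_\Tindex)\le r_\Tindex$ for every $\findex$, so combined with monotonicity of $\varphi_\findex$ (which is where $\eta<1/\LipK$ enters) each $T_\Tindex$ is absorbing. Inconsistent optimization \ref{A5} guarantees $\numT\ge 1$ and that each $T_\Tindex$ contains a local minimum of the one-dimensional partial sum. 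The multidimensional statement then follows immediately from \eqref{Eq:SepVarphi}: every $T_\Tindexd$ is a product of absorbing one-dimensional sets, hence absorbing. For \ref{Main_a_ii}, I would show by a greedy construction that from any point $x\in I$ one can concatenate maps $\varphi_{\findex_1},\ldots,\varphi_{\findex_{\ell_0}}$ whose composition pushes $x$ monotonically (coordinatewise) into $T$: in each coordinate, use \eqref{L_def}--\eqref{R_def} to always select a map that moves that coordinate toward $T^{(\dimindex)}$, and quantify the number of steps using Lipschitzness and compactness of $I$. The bound \eqref{Eq:Mass_On_B} in \ref{Main_a_iii} then follows because each path of length $\ell_0$ is chosen with probability at least $n^{-\ell_0}$, giving a uniform lower bound on the mass that exits $B$ in $\ell_0$ steps.

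For part \ref{Main_b}, the natural tool is the splitting condition of Dubins--Freedman and Bhattacharya--Lee for monotone IFS: if one can find an integer $\ell_\Tindexd$ and a point $y_\Tindexd\in T_\Tindexd$ such that, with positive probability, every trajectory starting in $T_\Tindexd$ crosses $y_\Tindexd$ in a common direction within $\ell_\Tindexd$ steps, then a contraction in the Kolmogorov/Fortet--Mourier metric follows. In one dimension, the splitting point can be taken inside $T_\Tindex$ and the splitting probability bounded from below by $n^{-\ell_\Tindex}$, yielding \eqref{Eq:CvgInTj}. In higher dimensions I would exploit the product structure: choose a splitting point coordinatewise and observe that, because the coordinates are coupled through the index $\findex_k$ (all coordinates are updated by the same $\findex_k$), the correct formulation of splitting is with respect to the partial order induced by a signature $\alpha_\Tindexd\in\{\pm 1\}^\dimR$. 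This is precisely why the metric $d_{\alpha_\Tindexd}$ appears: one picks $\alpha_\Tindexd$ so that the orbit compatible with the splitting point is monotone in the $\alpha_\Tindexd$-order, and the length $\ell_\Tindexd\ge\dimR$ reflects that at least $\dimR$ steps may be needed to align all coordinates. This coordinate-coupling issue is the main technical obstacle and is where the earlier one-dimensional lemmas of \S\ref{Sec:Lemmas1d_SGD} must be upgraded nontrivially.

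For part \ref{Main_result_d}, I would decompose $\mu_k$ into its restrictions to $B$ and to each $T_\Tindexd$ and track each piece through $\mathcal{P}$. The restriction to $B$ bleeds mass into $T$ at the geometric rate from \eqref{Eq:Mass_On_B}; the restriction to each $T_\Tindexd$ stays inside $T_\Tindexd$ by the absorbing property and contracts toward its total mass times $\mu_\Tindexd^\star$ by part \ref{Main_b}. To identify the limiting weights, I would construct $g_\Tindexd$ as the probability of eventual absorption in $T_\Tindexd$: set $g_\Tindexd(x):=\lim_{k\to\infty}(\mathcal{P}^{*k}\chi_{T_\Tindexd})(x)$, so by construction $g_\Tindexd=1$ on $T_\Tindexd$ and $g_\Tindexd=0$ on $T_{\Tindexd'}$ for $\Tindexd'\ne\Tindexd$, the $g_\Tindexd$ sum to $1$ by \ref{Main_a_ii}--\ref{Main_a_iii}, and each is a left eigenfunction of $\mathcal{P}$; continuity would come from a uniform-in-$x$ convergence estimate combined with the Lipschitz behavior of the $\varphi_\findex$. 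Taking $\ell:=2(\ell_0\vee\max_\Tindexd\ell_\Tindexd)$ is a common multiple ensuring both the transient decay and the contractions on each $T_\Tindexd$ run simultaneously; a triangle-inequality bookkeeping in the metric $\tilde d$ of \eqref{d_big_def} then yields \eqref{Eq:MainCvgMetric} with the constant $3$ absorbing the decomposition of errors across $B$ and the $T_\Tindexd$. The hardest bookkeeping step will be controlling the cross-term between mass newly arrived from $B$ (which may not yet be close to $\mu_\Tindexd^\star$) and mass already equilibrating inside $T_\Tindexd$; this is handled cleanly by restarting the contraction estimate \eqref{Con_Tr_Rd} at each absorption time and using that the total mass entering $T_\Tindexd$ is bounded by one.
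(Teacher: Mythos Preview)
Your overall architecture matches the paper's proof closely: one-dimensional lemmas on $L$, $R$, $T_\Tindex$ and monotonicity, then coordinatewise concatenation for part~\ref{Main_a}; the Dubins--Freedman/Bhattacharya--Lee splitting for part~\ref{Main_b}; and a two-stage decomposition (first let mass exit $B$, then equilibrate inside each $T_\Tindexd$) for part~\ref{Main_result_d}. Two spots deserve sharpening. First, in part~\ref{Main_b} you correctly flag the coordinate coupling as the obstacle but stop short of a construction; ``choose a splitting point coordinatewise'' does not work because the paths that split coordinate $j$ need not split coordinate $j'$. The paper resolves this by induction on the dimension: assuming paths $\vec p_1,\vec p_2$ split the first $d-1$ coordinates for some $\hat\alpha$, one checks whether they already split the $d$th coordinate (take $\alpha_d=+1$) or, if not, pre-composes with auxiliary paths $\vec q_1,\vec q_2$ that first compress the $d$th coordinate into an $\varepsilon$-interval, after which the Lipschitz bound forces $\vec p_1\circ\vec q_1$ and $\vec p_2\circ\vec q_2$ to split with $\alpha_d=-1$. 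Second, in part~\ref{Main_result_d} your definition $g_\Tindexd=\lim_k\mathcal P^{*k}\chi_{T_\Tindexd}$ is the right object, but each $\mathcal P^{*k}\chi_{T_\Tindexd}$ is discontinuous (it jumps where some $\varphi_{\vec i}(x)$ crosses $\partial T_\Tindexd$), so uniform convergence alone does not give continuity; the paper instead starts from any continuous $\psi_0$ with $\psi_0\equiv 1$ on $T_\Tindexd$ and $\psi_0\equiv 0$ on the other $T_{\Tindexd'}$, uses the Feller property to keep $\psi_k=\mathcal P^{*k}\psi_0$ continuous, and shows $\psi_k\to g_\Tindexd$ uniformly via the transience estimate \eqref{Eq:Mass_On_B}. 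Finally, the paper's bookkeeping for \eqref{Eq:MainCvgMetric} avoids a renewal argument: it simply writes $\mathcal P^{2\ell k}\mu_0=\mathcal P^{\ell k}(\mathcal P^{\ell k}\mu_0)$, splits the inner measure over $B$ and the $T_\Tindexd$, and bounds three terms by $\gamma^k$ each, which is where the factor $2$ in $\ell$ and the constant $3$ come from.
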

{\begin{corollary}[Unique invariant measure]\label{Coro_unique}
    Given the same assumptions as \cref{Main_thm_basin}, assume further that for each dimension $j \in [\dimR]$ there exists at least one $f_{i}^{(j)}$ that has a single critical point.  Then, there is exactly one absorbing set $T_{\mathbf{m}}$, and the Markov operator $\mathcal{P}$ has a unique invariant measure $\mu^\star \in \mathscr{P}(I)$.
\end{corollary}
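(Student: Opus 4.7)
The strategy is to reduce the corollary to showing $\M_j=1$ for every dimension $j\in[\dimR]$. Once this holds, $\mathcal{\M}=[\M_1]\times\cdots\times[\M_\dimR]$ is a singleton $\{\Tindexd\}$, so there is a unique absorbing rectangle $T_\Tindexd$, and \cref{Main_thm_basin} (parts (b) and (d)) then yields a unique invariant measure $\mu^\star=\mu^\star_\Tindexd\in\mathscr{P}(I)$; the convex combination in part (d) degenerates to a single term with $c_\Tindexd(\mu_0)=1$.

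Fix $j\in[\dimR]$ and let $f^\star:=f_{i_0}^{(j)}$ be the hypothesized non-zero function with unique critical point $x^\star$. By \ref{A3}, $(f^\star)'<0$ on $(-\infty,x^\star)$ and $(f^\star)'>0$ on $(x^\star,\infty)$. Consider the one-dimensional map $\varphi^\star(s):=s-\eta (f^\star)'(s)$. Assumption \ref{A4} with $\eta<1/\LipK$ makes $\varphi^\star$ strictly increasing on $\mathbb{R}$, with unique fixed point $x^\star$. Hence for every $s$ the orbit $\{(\varphi^\star)^k(s)\}_{k\geq 0}$ stays on the same side of $x^\star$ as $s$, is monotone (decreasing if $s>x^\star$, increasing if $s<x^\star$), and bounded; by continuity its limit is a fixed point of $\varphi^\star$, hence equals $x^\star$. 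So $x^\star$ globally attracts the deterministic iteration under $\varphi^\star$.

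Now fix any absorbing rectangle $T_\Tindexd$ from \cref{Main_thm_basin}. For $s\in T_{m_j}^{(j)}$, choose any $x\in T_\Tindexd$ with $x_j=s$ (possible since each factor $T_{m_k}^{(k)}$ is a non-degenerate interval); invariance of $T_\Tindexd$ under $\varphi_{i_0}$ together with the coordinate-wise action of $\varphi_{i_0}$ gives $\varphi^\star(s)=(\varphi_{i_0}(x))_j\in T_{m_j}^{(j)}$, so $T_{m_j}^{(j)}$ is invariant under $\varphi^\star$. Because $T_{m_j}^{(j)}$ is closed, the orbit $(\varphi^\star)^k(s)$ stays in $T_{m_j}^{(j)}$ and so does its limit $x^\star$. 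Thus every absorbing factor $T_{m_j}^{(j)}$ contains $x^\star$, and since the intervals $\{T_m^{(j)}\}_{m=1}^{\M_j}$ are pairwise disjoint by \cref{Prop:FiniteTs}, at most one can, giving $\M_j=1$.

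The main obstacle is the bridge from the global attraction of $\varphi^\star$ to the absorbing property of $T_{m_j}^{(j)}$. The argument itself is short, but it is the only place the single-critical-point hypothesis enters, and it depends crucially on \ref{A4} with $\eta<1/\LipK$ (so that $\varphi^\star$ is monotone and its only fixed point is $x^\star$) together with \ref{A3} (to pin down the signs of $(f^\star)'$ globally on each side of $x^\star$).
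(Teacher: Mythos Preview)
Your proof is correct, but it follows a genuinely different route from the paper's. You argue dynamically: the single-critical-point map $\varphi^\star$ has $x^\star$ as a global attractor on $I^{(j)}$, and since each one-dimensional factor $T_{m_j}^{(j)}$ is closed and positive invariant under $\varphi^\star$, it must contain the limit $x^\star$; disjointness then forces $\M_j=1$. The paper instead argues structurally via the sets $L$ and $R$: the single critical point $x_0$ of $f_{i^\star}$ forces $(x_0,\infty)\subset L$ and $(-\infty,x_0)\subset R$, so any $T_m=[l,r]$ not containing $x_0$ would lie entirely in $L$ or entirely in $R$, contradicting $l\in\partial L$ and $r\in\partial R$. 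The paper's argument is shorter and needs neither the step-size restriction $\eta<1/\LipK$ nor any iteration of $\varphi^\star$; your argument trades brevity for a clear dynamical picture (every absorbing interval must trap the attractor of any one of the maps). One minor slip: you assert $\varphi^\star$ is strictly increasing on $\mathbb{R}$, but \ref{A4} only gives the Lipschitz bound on $I^{(j)}$. This does not affect the argument, since the orbits you use start in $T_{m_j}^{(j)}\subset I^{(j)}$ and stay there by invariance, and $x^\star\in I^{(j)}$ as a critical point; it would be cleaner to state monotonicity only on $I^{(j)}$. You could also cite \cref{Prop:1d_Tj_PosInv} directly for the invariance of $T_{m_j}^{(j)}$ under $\varphi^\star$, rather than routing through the $d$-dimensional rectangle.
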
}

A few remarks are in order:
\begin{itemize}     
    \item The decomposition of $I$ in \cref{Main_thm_basin}(a) is in the spirit of a ``Doeblin decomposition'' (cf. \cite{MeynTweedie1993}); 
    \item The theorem bounds the support of $\mu_{\Tindexd}^{\star}$ to lie in $T_\Tindexd$ and does not imply the support is equal to $T_\Tindexd$. For instance, when $F$ is quadratic the invariant measure may be supported on a set of Lebesgue measure zero (cf. infinite Bernoulli convolutions \S\ref{Subsec:SGD_Background_ConstEta});
    \item If we let $W$ be the set of invariant measures of $\mathcal{P}$ with support in $I$, then the extreme points of $W$ are exactly $\mu_\Tindexd^\star$ ($\Tindexd \in \mathcal{\M}$);
    \item Since the $T_{\Tindexd}$ are disjoint closed rectangles, \cref{Main_thm_basin} implies the supports of every pair of distinct invariant measures may be separated by a hyperplane, i.e., the supports of any pair of invariant measures are not ``interlaced;''    
    \item While the number of sets $T_{\Tindexd}$ (and invariant measures) is bounded in terms of $F$, the definition of $T_\Tindexd$ depends on how $F$ is split into the functions $f_\findex$;
    \item The number of invariant measures does not depend on $\eta$ provided $\eta < 1/\LipK$, and the associated Markov chain for SGD exhibits no bifurcations as a function of $\eta$. The rate of convergence to equilibrium does however depend on $\eta$;
    \item The SGD iterates $X_k$ may traverse between two local minima of $F$ only if both minima are contained in the same $T_{\Tindexd}$ (somewhat akin to a ``mountain pass'' theorem);
    \item Somewhat surprisingly, the converse to \cref{Main_thm_basin}(a)(i) need not hold: the local (and even global) minima of $F$ do not need to be in $T$ (see \S\ref{Sec:Examples1d}). In other words, there are instances of SGD where even if the iterates $X_k$ are initialized to lie in a neighborhood of the global minimum of $F$, they will eventually leave (with probability $1$);
    \item The contraction estimate \eqref{Eq:CvgInTj} is sometimes referred to as a spectral gap estimate (in the metric $d_F$);
    \item When $\dimR > 1$, the invariant measures $\mu_{\Tindexd}^\star$ are not necessarily product measures. In addition, the main result in dimension $\dimR > 1$ does not follow as a corollary of the one dimensional case;
    %
    \item {The existence proof of $\ell$ in \cref{Main_thm_basin} yields a lower bound of $\ell \geq d$ (where $d$ is the dimension);} it also yields an upper bound in terms of $f_i^{(j)}$, e.g., see \S\ref{Sec:Examples1d}. {Note that the geometric convergence rate in the right hand side of \eqref{Con_Tr_Rd} is no better than $(1 - \frac{1}{n^{d}})^{\lfloor k/d\rfloor }$, since larger $\ell$ values yield slower bounds.}
\end{itemize}

\section{Examples in 1D}\label{Sec:Examples1d}
This section provides examples highlighting \cref{Main_thm_basin} in $\dimR = 1$. In each example, we analyze an objective function $F$ with the specific splitting 
\begin{equation}
\label{F_splitting_ex}
    F(x) = \frac{1}{2} \big(f_1(x) + f_2(x) \big) \, ,
\end{equation}
where 
\begin{equation}
\label{f_1f_2}
    f_1(x) := F(x) + \parval x \, ,  \qquad \textrm{and} \qquad f_2(x) :=F(x) - \parval x \, .
\end{equation}
Here $\parval > 0$ is a free parameter that modifies the functions $f_1, f_2$ in the decomposition. In this case, the maps $\varphi_1$ and $\varphi_2$ become 
\begin{equation*}
    \varphi_1(x) =x- \eta F^\prime (x) - \parval \eta \, ,  \qquad \textrm{and}  \qquad \varphi_2(x) :=x- \eta F^\prime (x) + \parval \eta  \, .
\end{equation*}
Notice that for this splitting, the SGD update becomes gradient descent plus an additional random walk with step-size $\lambda \eta$, and is also the one dimensional analog of the algorithm proposed in \cite{Jin2007}.

\subsection{Diffusion Approximation Background}\label{sec:DiffApprox}
    We collect here several basic facts regarding the diffusion approximation in dimension $\dimR = 1$ as we reference it in subsequent examples.
        
    The diffusion approximation is a variable coefficient advection-diffusion equation of the form
    \begin{align}\label{Eq:TimeEvolutionPDE1d}
        \frac{\partial \rho}{\partial t} 
        &= \frac{\partial}{\partial x}\big( u(x) \rho \big) + \frac{\eta}{2} \frac{\partial^2}{\partial x^2}\Big( D(x) \rho \Big) \, 
        \qquad \textrm{in} \quad (x,t) \in \mathbb{R} \times (0, T] \, ,
    \end{align}
    with initial data $\rho(x,0) = \mu_0$, where $\mu_0$ is the SGD initialization distribution. 
    
    The velocity $u(x)$ and diffusion coefficient $D(x)$ in \eqref{Eq:TimeEvolutionPDE1d} are given in terms of the SGD functions $f_\findex(x)$ and $F(x)$ as
    \begin{align*}          
        u(x) := \frac{d}{dx} \Phi(x) \qquad \textrm{where} \qquad \Phi(x) := F(x) + \frac{\eta}{4}(F'(x))^2 \, , 
    \end{align*}
    and
    \begin{align*}       
        D(x) &:= \frac{1}{n} \sum_{\findex=1}^n \Big( f_\findex^{\prime}(x) - F^{\prime}(x) \Big)^2 
        = \frac{1}{n} \sum_{\findex=1}^n \big(f_\findex^\prime(x) \big)^2 - \big(F^{\prime}(x)\big)^2 \geq 0 \, .
    \end{align*}
    Intuitively, the advective term in \eqref{Eq:TimeEvolutionPDE1d} evolves the probability $\rho$ towards minizers of $F$, while the diffusion term arises from the stochastic terms $f_\findex$ in SGD. For instance, formally setting $\eta = 0$ in \eqref{Eq:TimeEvolutionPDE1d} results in an advection equation for $\rho$, with \emph{characteristics} defined by the gradient flow $\dot{x} = -F^\prime(x)$.
    
    The equation \eqref{Eq:TimeEvolutionPDE1d} arises as a formal asymptotic approximation to the (exact) discrete-in-time Markov evolution $\mu_{j+1} = \mathcal{P} \mu_j$ in the small parameter $\eta \ll 1$ by matching terms up order $\mathcal{O}(\eta)$. When $\eta \ll 1$, $\rho(x,t)$ $(t = \eta j)$ approximates the SGD probability evolution $\mu_j$ for finite times (cf. \cite{LiTaiE2017, FengLiLiu2018, FengGaoLiLiuLu2020, HuLiLiLiu2017}).
    
    The stationary solutions of \eqref{Eq:TimeEvolutionPDE1d} satisfy
    \begin{align}\label{Eq:SteadyStatePDE1d}
        \frac{d}{dx}\big( u(x) \rho \big) + \frac{\eta}{2} \frac{d^2}{dx^2} \Big( D(x) \rho \Big) = 0 \, .
    \end{align}
    Note that \eqref{Eq:SteadyStatePDE1d} is a singular ordinary differential equation whenever the diffusion coefficient $D(x)$ vanishes at a point $x^*$, namely
    \begin{align}\label{Eq:VanishingD}
        D(x^*) = 0 \qquad \Longleftrightarrow \qquad f_1^{\prime}(x^*) = f_2^{\prime}(x^*) = \ldots = f_n^{\prime}(x^*) = F^{\prime}(x^*) \, . 
    \end{align}
        
   If \eqref{Eq:VanishingD} holds nowhere, then  equation \eqref{Eq:SteadyStatePDE1d} admits the following unique solution in the space of probabilities densities 
    \begin{align}
    \label{density_DA}
        \rho^*(x) &= Z^{-1} \, \textrm{exp}\Big( - \frac{2}{\eta} V(x) \Big) \,  ,
    \end{align}
    where
    \begin{align*}        
        V(x) &:= \int^x D^{-1}(x) \frac{d}{dx}\Big( \Phi(x) + \frac{2}{\eta} D(x) \Big) \, dx \, ,  
    \end{align*}
    provided 
    \begin{align*}
        Z := \int_{-\infty}^{\infty} \textrm{exp}\Big(- \frac{2}{\eta} V(x) \Big) \, dx  < \infty \, .
    \end{align*}             
    For the splitting given in \eqref{F_splitting_ex} and \eqref{f_1f_2}, the stationary density of the diffusion approximation given in \eqref{density_DA} is simplifies to
\begin{equation}
\label{Rho_examples}
    \rho^\star(x) \propto \textrm{exp} \left( -\frac{2}{\eta \lambda^2} \Phi(x) \right) =  \textrm{exp} \left( -\frac{2}{\eta \lambda^2} \left( F(x) + \frac{\eta}{4}(F'(x))^2 \right) \right) \, .
\end{equation}
In the following examples, we will compare the approximation \eqref{Rho_examples} to the true invariant measures of the SGD Markov operator $\mathcal{P}$. We will see that for larger values of $\lambda$ the Markov operator $\mathcal{P}$ has a unique invariant measure $\mu^\star$ which is approximated by the density $\rho^\star(x)$. However, as $\lambda$ decreases, the Markov operator $\mathcal{P}$ may have multiple invariant measures that differ from the diffusion approximation. 
\subsection{SGD on a One Dimensional Double Well}
\label{Subsec:DoubleWell}
This first example demonstrates \cref{Main_thm_basin} with SGD applied to the double-well objective function
\begin{align}
\label{DW}
        F(x) := \frac{1}{4}  (1-x^2)^2 \, ,
\end{align}
with the splitting given in \eqref{F_splitting_ex} and \eqref{f_1f_2}.
The free parameter $\parval > 0$ modifies the functions $f_1, f_2$ in the decomposition (see  \cref{DW_plot}) and will lead to a bifurcation in the invariant measures of the associated Markov operator.

\begin{figure}
\centering
\begin{minipage}[b]{.9\textwidth}
\includegraphics[trim={1.2cm 0 0 0},clip,width=\textwidth]{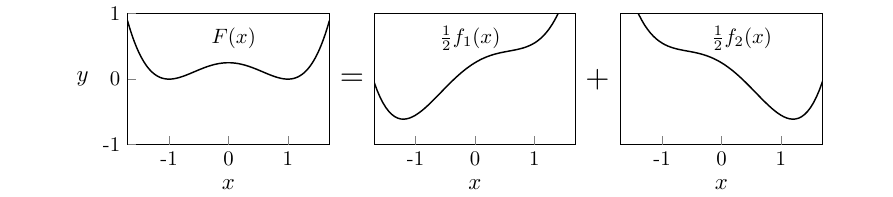}
\end{minipage}
\begin{minipage}[b]{.9\textwidth}
\includegraphics[trim={1.2cm 0 0 0},clip,width=\textwidth]{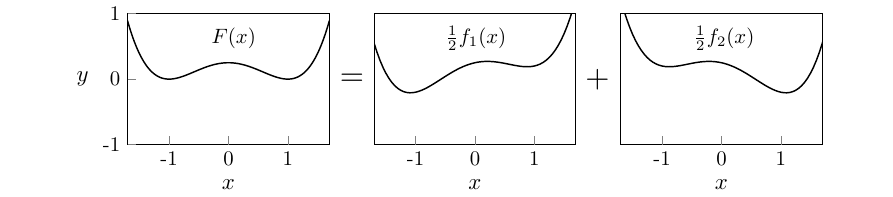}
\end{minipage}
 
\caption{Visualization of the SGD model problem given by \eqref{F_splitting_ex}--\eqref{f_1f_2} and \eqref{DW} for values (Top) $\lambda=.55> \lambda_c$, and (Bottom) $\lambda=.2 < \lambda_c$.  When $\lambda > \lambda_c$, the SGD iterates can cross over the barrier of $F$ and there is a unique invariant measure. When $\lambda < \lambda_c$ the SGD iterates cannot cross over the barrier of $F$ and there are two invariant measures. \label{DW_plot}  }
\end{figure} 

We first establish several observations to invoke \cref{Main_thm_basin}.  The critical points of $f_2$, and by symmetry $f_1$ (letting $x \mapsto -x$), are solutions to the cubic equation
\begin{align*}    
    x^3 - x - \lambda = 0 \, .
\end{align*}
Denote $x_0 = x_0(\lambda) > 0$ as the largest solution; thus $I = [-x_0, x_0]$.  

The number of critical points in each of $f_1, f_2$ change as a function of $\lambda$ from $1$ to $3$ (see \cref{Fig:DW_left_right_sets}) with the critical value being
\begin{equation*}
\label{L_c}
    \parval_c := \frac{2}{3 \sqrt{3}} \, . 
\end{equation*}
Note also that for all $\lambda > 0$ the critical points of $f_1$ and $f_2$ are distinct.

The Lipschitz constant of $f_\findex^{\prime}$ on $I$ is:
\begin{align}
    \textrm{Lip} \; f_\findex^{\prime} = 3 x_0^2 - 1 \, \qquad (\findex = 1, 2)\, ,
\end{align}
which sets the upper bound on $\eta$ (see \cref{Supp_DW_fig}) given in \cref{Main_thm_basin} as
\begin{align}
\label{Etabbd}
    \eta_0(\lambda) = \big( 3  x_0^2 - 1 \big)^{-1}  \,. 
\end{align}
Thus, \cref{Main_thm_basin} applies for all $\lambda > 0$ and $0 < \eta < \eta_0(\lambda)$. 
\begin{figure}[htb!]
      \centering
  \includegraphics[scale=.9]{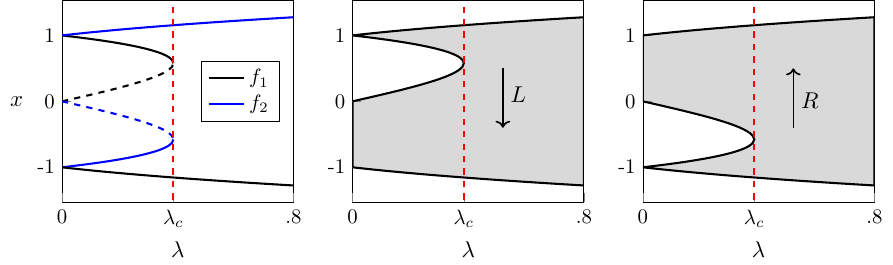}
\caption{Model problem \eqref{DW}.  Left: the critical points of the functions $f_1$ (black) and $f_2$ (blue) are plotted as a function of $\lambda$.  Solid curves represent local minima and dashed curves represent local maxima. Middle: for each $\lambda$ the vertical cross section of the filled in region represents the left moving set $L$. Right: for each $\lambda$ the vertical cross section of the filled in region represents the right moving set $R$.      }\label{Fig:DW_left_right_sets}
 \end{figure}
 
\begin{figure}[htb!]
      \centering
  \includegraphics[scale=.9]{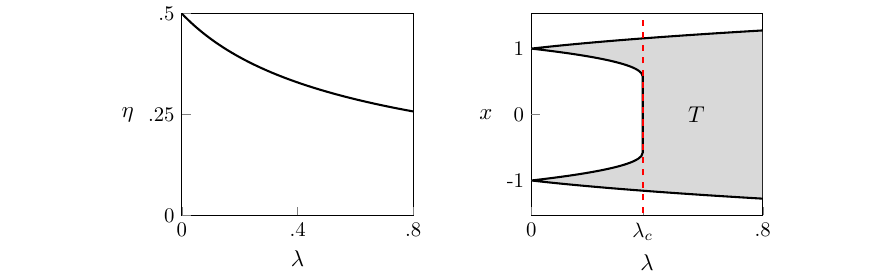}
  \caption{Model problem \eqref{DW}. Left: the bound on $\eta$ for the validity of \cref{Main_thm_basin} given in \eqref{Etabbd} is plotted as a function of $\lambda$. Right: for each $\lambda$ the vertical cross section of the filled in region represents the absorbing set $T$. For $\lambda> \lambda_c$ we have one absorbing interval and for $\lambda \leq \lambda_c$ we have two absorbing intervals.  }
  \label{Supp_DW_fig}
 \end{figure} 

\medskip

\noindent\fbox{Case 1: $\lambda > \lambda_c$.} The functions $f_1, f_2$ have a unique critical point given by 
    \begin{align*}
        f_1^{\prime}(-x_0) = 0 \qquad\qquad f_2^{\prime}(x_0) = 0 \, ,
    \end{align*}
    and a single $T_1 = I = [-x_0, x_0]$ (see \cref{Supp_DW_fig}) defined by the sets $L$ and $R$ presented in \cref{Fig:DW_left_right_sets}.  By \cref{Main_thm_basin} the operator $\mathcal{P}$ has a unique invariant measure $\mu^\star$ with support contained in $T_1$.  \cref{DW_Mu1a} visualizes the crude agreement between the diffusion approximation $\rho^\star(x)$ defined by \eqref{Rho_examples}--\eqref{DW} and a numerical approximation to $\mu^{\star}$. 

    \par 
    In addition, from \cref{Main_thm_basin} any initial $\mu_0$ supported on $I=T_1$ converges via
    \begin{equation}\label{Eq:SpectralGap}
        d_F(\mu_{k }, \mu^\star) \leq  \left(1-\frac{1}{2^{\ell_1}} \right)^{ \lfloor k/\ell_1 \rfloor } d_F(\mu_0, \mu^\star) \, .
    \end{equation}
    
    A bound on $\ell_1$, and hence the convergence rate, can be obtained from \cref{Thm:DB_Path} with the two paths satisfying \eqref{Path_condition} taken to be $\vec{i}$ (resp. $\vec{j}$) as the $\ell_1$ consecutive compositions of $\varphi_2$ (resp. $\varphi_1$).  For all $x \in [-x_0, 0]$ the map $\varphi_2$ moves the point $x$ to the right by the amount  
    \begin{align*}
       \varphi_2(x) - x &= \eta \big(x-x^3+ \parval \big) \\
       &\geq \eta \big( \parval - \parval_c \big) \, , 
   \end{align*}
    which follows by substituting the minimum $x=-\frac{1}{\sqrt{3}}$.
    
    Hence, by symmetry of the maps, condition \eqref{Path_condition} is satisfied as 
    \begin{align*}
        \varphi_{\vec{i}\,}(-x_0) \geq 0 \geq \varphi_{\vec{j}\,}(x_0) \, ,
    \end{align*}
    by taking  
    \begin{equation}\label{Eq:BoundM1}
        \ell_1 = 1 + \left \lfloor  \frac{x_0}{\eta ( \parval - \parval_c )} \right \rfloor 
        \leq 1 + \left \lfloor  \frac{1 + \sqrt[3]{\parval}}{\eta ( \parval - \parval_c ) } \right \rfloor \, .
   \end{equation}     
    Notice the upper bound on $\ell_1$ given in \eqref{Eq:BoundM1} approaches $\infty$ as $\lambda \rightarrow \lambda_c$.  While \eqref{Eq:SpectralGap} together with \eqref{Eq:BoundM1} is only a lower bound on the spectral gap, the critical slow down in convergence rate as $\lambda \rightarrow \lambda_c$ is also observed numerically.

\begin{figure}[htb!]
      \centering
  \includegraphics[scale=.9]{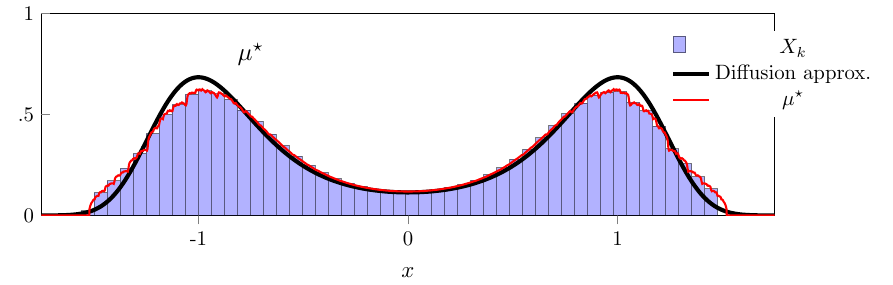}
  \caption{Model problem \eqref{DW}. A comparison of the exact unique invariant measure $\mu^\star$ (red) and diffusion approximation \eqref{Rho_examples}--\eqref{DW} (black) for parameter values $\parval=2 > \parval_c$, $\eta=.0698$ (satisfying \eqref{Etabbd}). Here Ulam's method \cite{Ulam_method} is used to numerically compute the exact invariant measure (red). For comparison a time histogram of the iterates $X_k$ are plotted (blue) showing good agreement with $\mu^\star$. Note that the lack of smoothness in $\mu^\star$ (red) is a property of the invariant measure and not a result of under resolved computations.}
   \label{DW_Mu1a}
 \end{figure}

\medskip
\noindent\fbox{Case 2: $\lambda \leq \lambda_c$.} The functions $f_1, f_2$ each have three critical points (see \cref{Fig:DW_left_right_sets}) given by    
    \begin{align*}
        f_2^{\prime}(x_j) = 0 \qquad j = 0,  1,  2 \, \qquad  \textrm{where} \qquad x_2  \leq x_1 < 0 < x_0 \, ,        
    \end{align*}
    and by symmetry $f_1^{\prime}(-x_j) = 0$ for $j = 0, 1, 2$. When $\lambda = \lambda_c$ the two critical points $x_1 = x_2$, while $x_2 < x_1$ when $\lambda < \lambda_c$. In addition,  \cref{Fig:DW_left_right_sets} shows the construction of the sets $L$ and $R$, which leads to the absorbing sets displayed in \cref{Supp_DW_fig}. In this case, there are two absorbing intervals given by
    \begin{align*}
        T_1 = [-x_0, x_2] \qquad \qquad T_2 = [-x_2, x_0] \, . 
    \end{align*}
    
   By \cref{Main_thm_basin} the operator $\mathcal{P}$ has two invariant measures $\mu_1^\star$ and $\mu_2^\star$ (see \cref{DW_mu1_mu2}) supported on $T_1$ and $T_2$ respectively.  Furthermore, {any initial measure $\mu_0$ supported on $I$ converges as
    \begin{equation*}
        d _F\Big( \mu_{k } , \,  c_1 (\mu_0) \mu_1^\star+    c_2(\mu_0) \mu_2^\star \Big)  \leq 3 \left(1 - \dfrac{1}{2^{\ell}}\right)^{\lfloor k/\ell \rfloor} \, ,
    \end{equation*}
where 
\begin{equation}
\label{g_12_example}
    c_1 (\mu_0) : = \int_I g_1(x) \mu_0 (dx) \qquad \textrm{and} \qquad  c_2 (\mu_0) : = \int_I g_2(x) \mu_0 (dx) \, .
\end{equation}
For a specific choice of parameters $\lambda$ and $\eta$, the functions $g_1$ and $g_2$ are plotted in \cref{DW_mu1_mu2}. }Lastly, a bound on $\ell$ can be obtained via elementary means.
    \par
    This example demonstrates the discrepancy between the exact invariant measures (e.g., for which there are two, $\mu_1^\star$ and $\mu_2^\star$) and the single invariant measure $\rho^\star(x)$ predicted by the diffusion approximation. In particular, the exact SGD dynamics cannot escape the local minima of $F$, while the diffusion approximation implies that iterates of SGD will eventually escape and travel between local minima. {A similar observation on the inability of the iterates $X_k$ to cross a saddle was also made for closely related dynamics in \cite[Remark 12]{KongTao2020}}.

    Lastly, the example also demonstrates that the number of invariant measures is at least one and most two, since $F(x)$ has two local minima. While the bounds on the spectral gap and the invariant measures depend on $\eta$, the sets $T_\Tindex$ ($\Tindex = 1,2$) and the number of invariant measures are independent of $\eta$ (provided $\eta < \eta_0$).  As a result, there are no bifurcations in the dynamics $\mu_{k+1} = \mathcal{P} \mu_k$ (i.e., creation or loss of new fixed points) in terms of $\eta$. 

\begin{figure}[htb!]
      \centering
      \begin{minipage}{1\textwidth}
      \centering
           \includegraphics[scale=.90]{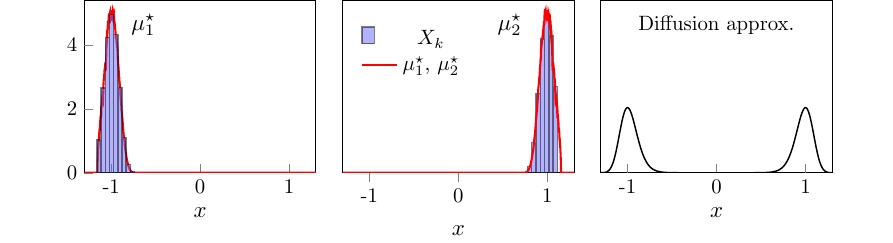}
      \end{minipage}

  \begin{minipage}[b]{1\textwidth}
  \centering
\includegraphics[scale=.9,trim={0cm 0cm .7cm 0},clip]{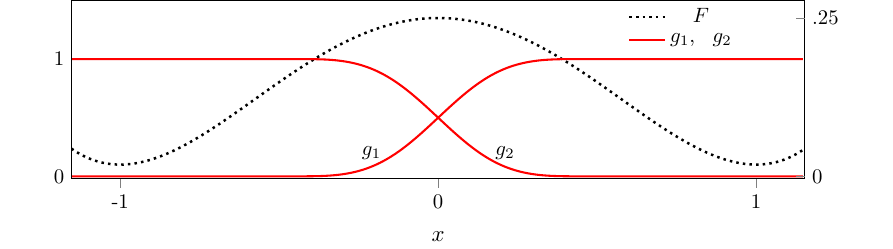}
\end{minipage}

  \caption{Model problem \eqref{DW}. For $\parval=.38 < \parval_c$ and $\eta=.33$ (satisfying \eqref{Etabbd}) there are two invariant measures $\mu_1^\star$ ({top} left), $\mu_2^\star$ ({top} middle), while the diffusion approximation incorrectly predicts a unique stationary distribution $\rho^\star$ ({top} right). Note that the lack of smoothness in $\mu_j^\star$ ($j=1,2$) again is a property of the invariant measure and not due to an unresolved computation. {The functions $g_1$, $g_2$ (red) given in \eqref{g_12_example} along with the objective function $F$ are also plotted on the bottom panel.}  
  }    
 \label{DW_mu1_mu2}
 \end{figure}

\subsection{An Example where SGD Does Not Sample the Global Minimum}\label{Subsec:ExGlobalMinFail}
Here we provide an example where the global minimum of the objective function $F$ is contained inside a uniformly transient region and hence not contained in the support of an invariant measure. From \cref{Main_thm_basin} this implies that as $k \rightarrow \infty$ the iterates $X_k$ do not sample the global minimum (even for small $\eta$ values and arbitrary initializations $X_0$).
\par 
Let the objective function $F(x)$ be the eighth order polynomial,
\begin{equation} 
\label{F_gl_min}
    F(x)= c_4 x^4 + c_6x^6 + c_8x^8 \, ,
\end{equation}
where $c_4 \approx 2.84$, $c_6 \approx -2.94 $, and $c_8 \approx 0.78$ are chosen so that $F(x)$ has two local minima at $x= \pm 1.35$ and two local maxima at $x = \pm 1$. The global minimum lies at $x = 0$, which locally is a quartic and is also the ``flatest'' minima, e.g., see \cref{GL_Min_plot}. 
\par 
We again split $F$ as in \eqref{F_splitting_ex} and \eqref{f_1f_2} depending on the paramter $\lambda$.  From \cref{LR_GL_min} there are two critical values of $\lambda$:
\begin{equation*}
    \lambda_1 \approx 1.47 ,  \qquad \textrm{and}  \qquad \lambda_2 \approx 1.85 \, ,
\end{equation*}
for which the critical points of $f_1$ and $f_2$ have a bifurcation. 

\begin{figure}[htb!]
\centering
\begin{minipage}[b]{.9\textwidth}
\includegraphics[trim={.5cm 0 0 0},clip,width=\textwidth]{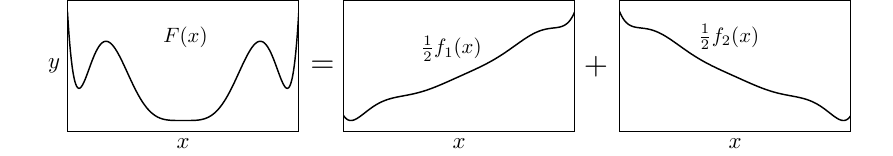}
\end{minipage}
\begin{minipage}[b]{.9\textwidth}
\includegraphics[trim={.5cm 0 0 0},clip,width=\textwidth]{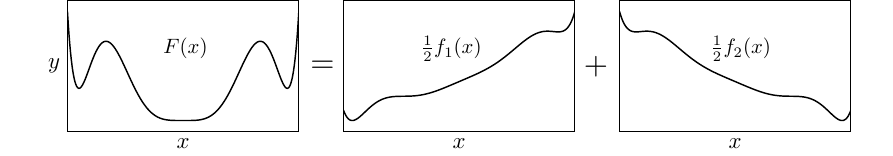}
\end{minipage}
 \begin{minipage}[b]{.9\textwidth}
\includegraphics[trim={.5cm 0 0 0},clip,width=\textwidth]{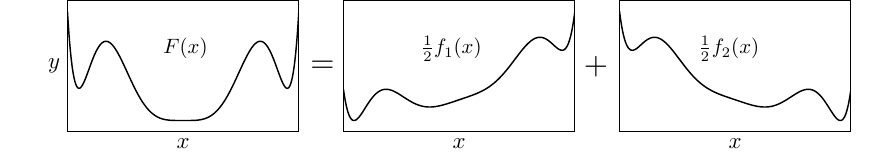}
\end{minipage}
\caption{Visualization of the SGD model problem given by \eqref{F_splitting_ex}--\eqref{f_1f_2} and \eqref{F_gl_min} for values (Top) $\lambda> \lambda_1$, (Middle) $ \lambda_1 < \lambda< \lambda_2$, and (Bottom) $\lambda < \lambda_1$.  When $\lambda > \lambda_2$, the SGD iterates can cross over the barriers of $F$ and there is a unique invariant measure. When  $\lambda_1 < \lambda < \lambda_2$, for any $x_0 \in I$, the SGD iterates get trapped in a sub-optimal local minimum and there are two invariant measures. Lastly, when $\lambda< \lambda_1$ there are three invariant measures. \label{GL_Min_plot}}
\end{figure} 

From the critical points of $f_1$ and $f_2$ we can construct the left set $L$ and right set $R$ shown in \cref{LR_GL_min}, along with the state space $I$ as the interval from the smallest critical point of $f_1$ to the largest critical point of $f_2$.  The set $T$ is then constructed via \cref{Def:SetsTj} and shown in \cref{Supp_GL_Min}.


\begin{figure}[htb!]
      \centering
  \includegraphics[scale=.9]{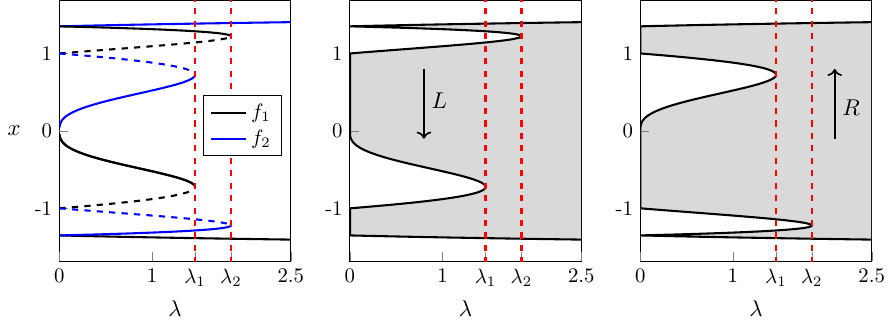}
\caption{Model problem from \eqref{F_gl_min}. Left: the critical points of the functions $f_1$ (black) and $f_2$ (blue) are plotted as a function of $\lambda$.  Solid curves represent local minimum and dashed curve represent local maximum. Middle: for each $\lambda$ the vertical cross section of the filled in region represents the left moving set $L$. Right: for each $\lambda$ the vertical cross section of the filled in region represents the right moving set $R$.  }
 \label{LR_GL_min}
 \end{figure}

 \begin{figure}
      \centering
  \includegraphics[scale=.9]{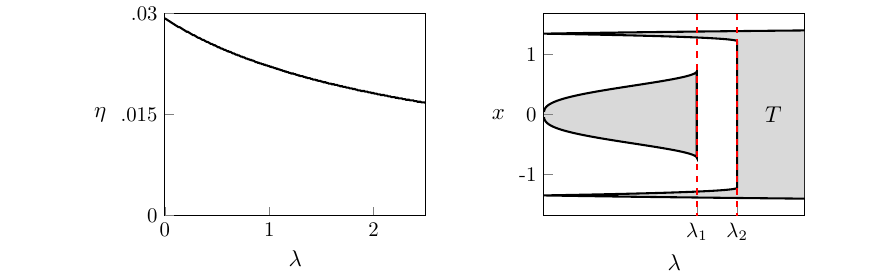}

  \caption{Model problem from \eqref{F_gl_min}. Left: the bound on $\eta$ for the validity of \cref{Main_thm_basin} given in \eqref{eta_bbd_gl_min} is plotted as a function of $\lambda$. Right: for each $\lambda$ the vertical cross section of the filled in region represents the absorbing set $T$. For $\lambda> \lambda_2$ we have one absorbing interval, for $ \lambda_1 <\lambda \leq \lambda_2$ we have two absorbing intervals, and for $\lambda \leq \lambda_1$ we have three absorbing intervals.  \label{Supp_GL_Min}}
 \end{figure}

\par
For all values
\begin{equation}
    \eta < \left( \max \left\{\textrm{Lip} \; f_1^{\prime} ,\textrm{Lip} \; f_2^{\prime} \right\} \right)^{-1}
    \label{eta_bbd_gl_min}
\end{equation}
(see \cref{Supp_GL_Min}) we can apply \cref{Main_thm_basin} to obtain the following results.

\bigskip 
\noindent\fbox{Case 1: $\lambda > \lambda_2$.} The functions $f_1, f_2$ have a unique critical point (see  \cref{LR_GL_min})  and a single absorbing interval $T_1 $ (see \cref{Supp_GL_Min}).
    By \cref{Main_thm_basin} the Markov operator $\mathcal{P}$ has a unique invariant measure $\mu^\star$ supported on $T_1$. \cref{GL_Min_Mu1} compares $\mu^\star$ (computed numerically using Ulam's method \cite{Ulam_method}) to a time histogram of the SGD iterates $X_k$ and diffusion approximation  $\rho^\star$ given by \eqref{Rho_examples} and \eqref{F_gl_min}. 
        
    \par 
    In addition, from \cref{Main_thm_basin} there exists an $\ell_1>0$ such that for any initial $\mu_0$ supported on $I=T_1$ we have
     \begin{equation*}
        d_F(\mu_{k }, \mu^\star) \leq  \left(1-\frac{1}{2^{\ell_1}} \right)^{\lfloor k/\ell_1 \rfloor} d_F(\mu_0, \mu^\star) \, .
    \end{equation*}

    \bigskip
    \noindent\fbox{Case 2: $\lambda_2 \geq \lambda > \lambda_1$.} The functions $f_1, f_2$ each have three (distinct) critical points (see \cref{GL_Min_plot} and \cref{LR_GL_min}). \cref{LR_GL_min} shows the construction of the sets $L$ and $R$, which leads to the sets $T$ displayed in \cref{Supp_GL_Min}. In this case, there are two absorbing intervals $T_1$ and $T_2$.
    
   By \cref{Main_thm_basin} the Markov operator $\mathcal{P}$ has two invariant measures $\mu_1^\star$ and $\mu_2^\star$ (see \cref{GL_min_mu1_mu2}) supported on $T_1$ and $T_2$ respectively.  Furthermore, any measure $\mu_0$ supported on $I$ converges to a convex combination of $\mu_1^\star$ and $\mu_2^\star$. 
     \par 
   From \cref{GL_min_mu1_mu2} we see that both $\mu_1^\star$ and $\mu_2^\star$ are supported around the sub-optimal local minimum of $F(x)$ at $x=-1.35$ and $x=1.35$ respectively, and hence both do no agree with the stationary density of  the diffusion approximation given in \eqref{Rho_examples}. In addition, both $T_1$ and $T_2$ do not contain $x=0$ (see \cref{Supp_GL_Min}), which is the global minimum of $F(x)$. Thus, from \cref{Main_thm_basin}  the global minimum $x=0$ is not contained in the support of invariant measures $\mu_1^\star$ and $\mu_2^\star$, and the random iterates $X_k$ do not sample the global minimum as $k \rightarrow \infty$.

    \bigskip 
    \noindent\fbox{Case 3: $\lambda \leq \lambda_1$.} The functions $f_1, f_2$ each have five (distinct) critical points (see \cref{GL_Min_plot} and \cref{LR_GL_min}). In this case, there are three absorbing intervals $T_1$, $T_2$, and $T_3$ (see \cref{Supp_GL_Min}). The set $T_2$ contains the global minimum $x=0$, while $T_1$ and $T_3$ each contain the sub-optimal local minimum $x=-1.35$ and $x=1.35$ respectively. 
    \par 
    Thus, from \cref{Main_thm_basin}  the Markov operator $\mathcal{P}$ has three invariant measures $\mu_1^\star$, $\mu_2^\star$, and $\mu_3^\star$ each shown in \cref{GL_min_mu1_mu2_mu3}.  From \cref{GL_min_mu1_mu2_mu3} we can see that $\mu_1^\star$ and $\mu_2^\star$ do not agree with the diffusion approximation, while $\mu_2^\star$, whose support contains the global minimum of $F(x)$, does agree with the diffusion approximation.


 \begin{figure}[htb!]
      \centering
  \includegraphics[scale=.9]{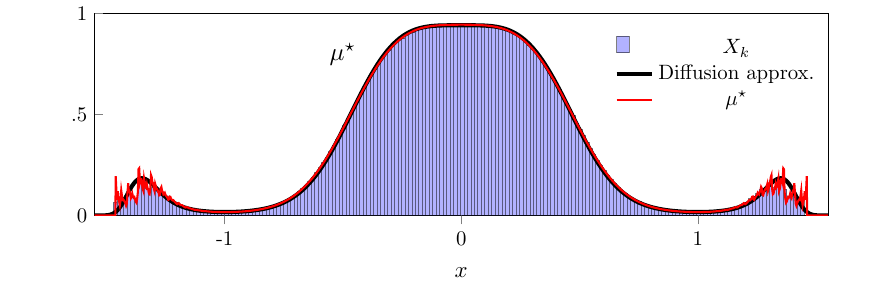}

  \caption{Model problem from \eqref{F_gl_min}. For $\parval=7 > \parval_c$ and $\eta=.007$ (satisfying \eqref{eta_bbd_gl_min}), the probability density function
 of the unique invariant measure  $\mu^\star$ is plotted. Ulam's method \cite{Ulam_method} is used to estimate the exact invariant measure (red). A histogram of the iterates $X_k$ is plotted (blue). The stationary density of the diffusion approximation given in \eqref{Rho_examples} and \eqref{F_gl_min} is plotted (black). }
   \label{GL_Min_Mu1}

 \end{figure}

 \begin{figure}[htb!]
      \centering
 \begin{minipage}[b]{.8\textwidth}
\includegraphics[trim={.7cm 0 0 0},clip,width=\textwidth]{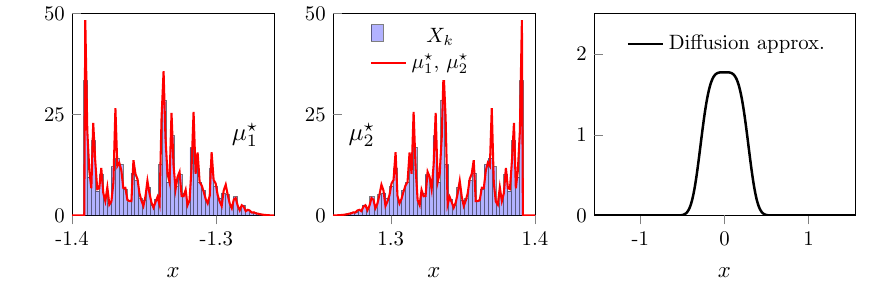}
\end{minipage}
\begin{minipage}[b]{.8\textwidth}
\includegraphics[trim={1cm 0 0 0},clip,width=\textwidth]{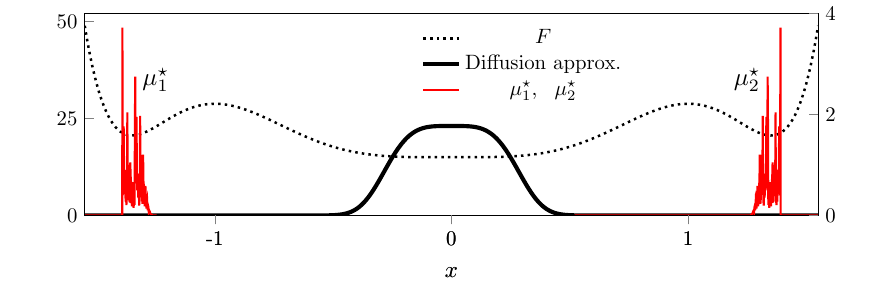}
\end{minipage}

  \caption{Model problem from \eqref{F_gl_min}. Bottom panel shows the relation of the exact invariant measures $\mu_j^{\star}$ ($j=1,2$) (vertical scale units on left) and diffusion approximation (vertical scale units on right) to the objective function (dashed line, arbitrary units).   
  This example demonstrates that the global minimizer of $F$ is not contained inside the support of either invariant measure $\mu_j^{\star}$ ($j=1,2$) which are plotted in red (time-histograms of the SGD iterates $X_k$ are in blue). The supports of $\mu_j^{\star}$ are each on a neighborhood of the sub-optimal minimizers of $F$. In contrast, the diffusion approximation $\rho^\star$ (black) is localized to the neighborhood of the global minima of $F$ and fails to approximate the invariant measures.  The top row plots an enlarged image of each measure.
 }
 \label{GL_min_mu1_mu2}
 \end{figure}

  \begin{figure}[htb!]
      \centering
  \includegraphics[scale=.9]{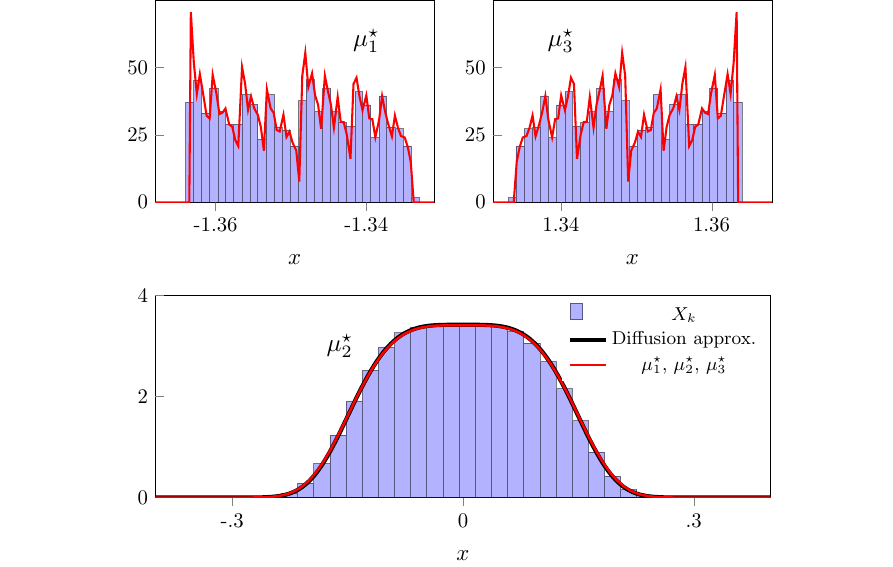}

  \caption{Model problem from \eqref{F_gl_min}. For $\parval=.5 < \parval_1$ and $\eta=.015$ (satisfying \eqref{eta_bbd_gl_min}), the probability density function
 of $\mu_1^\star$ (top left), $\mu_3^\star$ (top right), $\mu_2^\star$ (bottom), and the diffusion approximation (black bottom) is plotted. Ulam's method \cite{Ulam_method} is used to estimate $\mu_1^\star$, $\mu_2^\star$, and $\mu_3^\star$ (red). In addition, a histogram of the iterates $X_k$ is also plotted (blue). Note that $\rho^{\star}$ approximates $\mu_2^{\star}$ but fails to approximate the other two invariant measures. }
 \label{GL_min_mu1_mu2_mu3}
 \end{figure}

\newpage
\section{Mathematical Background}
\label{Sec:MathBackground}
This section collects the mathematical background used throughout the paper. 

We say that a non-empty set $T \subset \mathbb{R}^\dimR$ is \emph{positive invariant} for the SGD dynamics, if 
\begin{equation}\label{Def:Trapping}
   \varphi_i(T) \subset T \, , \qquad \textrm{for all } 1 \leq i \leq n \, .
\end{equation}
For a general state space, a Borel set $T$ is \emph{absorbing} \cite[Section 4.2.2]{Meyn_Tweedie} if the Markov transition kernel $p(x,T) = 1$ for all $x \in T$. Identity \eqref{Def:Trapping} ensures that if the initial measure $\mu_0$ is supported on $T$, then all successive iterations $\mu_k$ defined by \eqref{Eq:MarkovOperatorDynamics} will be supported on $T$. Hence, for the SGD dynamics, a Borel set $T$ is absorbing if and only if it is positive invariant.

A Borel set $B$ is \emph{uniformly transient} \cite[Chapter 8]{Meyn_Tweedie} if the function 
\begin{align*}
    U(x) := \sum_{n=0}^{\infty}(\mathcal{P}^n \delta_x)(B)\, , 
\end{align*}
is bounded on $B$, i.e., $\sup_{x \in B} U(x) < \infty$.  The function $U(x)$ measures the expected number of times the Markov chain initialized to $X_0 = x \in B$ visits $B$; uniformly transient sets are expected to spend only a finite time in $B$. The inequality \eqref{Eq:Mass_On_B} implies $U(x)$ is bounded by a geometric series, and hence the associated set $B$ is uniformly transient.

{The dual of the Markov operator $\mathcal{P}$ is defined on bounded measurable functions $f$ as
\begin{equation*}
    (\mathcal{P}^{\star} f) (x) := \int_{\mathbb{R}^\dimR} f(y) \, p(x, dy) \, .
\end{equation*}
For iterated functions systems where $p$ is given by \eqref{Def:TransitionKernel}, the  operator $\mathcal{P}^{\star}$ takes the form
\begin{align}\label{Def:Pstar}  
    (\mathcal{P}^{\star} f)(x)= \frac{1}{n} \sum_{i=1}^n f\big(\varphi_{i}(x) \big) \, . 
\end{align}
When $\varphi_i$ is continuous (e.g., as in Assumption~\ref{A1}), $\mathcal{P}^{\star}$ is said to be \emph{weak Feller} as it maps continuous functions into continuous functions.
Writing $\langle f, \mu \rangle := \int_{\mathbb{R}^d} f(x) \, d\mu(x)$ as the pairing between bounded measurable functions and finite measures, then
\begin{align}\label{Eq:AdjointProperty}
    \langle f, \mathcal{P} \mu \rangle = \langle \mathcal{P}^\star f, \mu \rangle \, .
\end{align}
Throughout $\delta_x$ denotes the Dirac measure on $\mathbb{R}^{\dimR}$ and $\delta_{\Tindexd \Tindexd'}$ the Kronecker delta (i.e., $\delta_{\Tindexd \Tindexd'} = 1$ when $\Tindexd = \Tindexd'$ and $0$ otherwise).
}

Let $\dtuple = (\dtuple_1, \, \dtuple_2, \, \ldots, \, \dtuple_\dimR) \in \{-1, +1\}^\dimR$ denote the $\dimR$-tuple of signed unit coefficients, and associate to each of the $2^\dimR$ vectors $\dtuple$ the closed orthont of $\mathbb{R}^\dimR$
\begin{align*}
    \mathbb{R}_{\dtuple}^{\dimR} :=
     \left\{ c_1 \, (\dtuple_1 e_1) + c_2 \, (\dtuple_2 e_2) \hdots + c_\dimR \, (\dtuple_\dimR e_\dimR)  \, : \, c_j \geq 0  \textrm{ for } 1\leq j \leq \dimR \right\} \, .
\end{align*}
Here $e_j$ denotes the usual basis vectors in $\mathbb{R}^\dimR$.  Each orthant $\mathbb{R}_{\dtuple}^{\dimR}$ then defines a partial ordering $\preceq_{{\dtuple}}$ on $\mathbb{R}^\dimR$ given by 
\begin{equation*}
    x \preceq_{{\dtuple}}  y  \qquad \textrm{if and only if} \qquad y-x \in \mathbb{R}_{\dtuple}^{\dimR} \, .
\end{equation*}
For two (non-empty) sets $A, B \subset \mathbb{R}^\dimR$, we write $x \in A \setminus B$ if $x \in A$ and $x \notin B$, and also $A \preceq_{\dtuple} B$ if $a \preceq_{\dtuple} b$ for all $a \in A$ and $b \in B$.

A map $\gamma : S \rightarrow S$ ($S \subset\mathbb{R}^{\dimR}$) is said to be monotone with respect to the cone $\mathbb{R}_{\dtuple}^{\dimR}$ on a set $S$ if for any $x, y \in S$
\begin{equation}\label{Eq:MonotoneProperty}
    x \preceq_{\dtuple} y  \qquad \textrm{implies} \qquad  \gamma(x) \preceq_{\dtuple} \gamma(y) \, .
\end{equation}
Note that if $x \preceq_{\dtuple} y$ then $y \preceq_{-\dtuple} x$. To avoid this redundancy, we take $\alpha_1 = +1$.

The metric used in the convergence result of Bhattacharya and Lee makes use of the following restricted class of sets. Define $\mathcal{A}_{\dtuple}$ to be the family of sublevel sets of continuous monotone maps
\begin{equation*}
   A \in  \mathcal{A}_\dtuple \qquad \textrm{if} \qquad A = \left\{ y \in \mathbb{R}^{\dimR} \; : \; \gamma(y) \preceq_\dtuple c \right\} \, , 
\end{equation*}
for some constant vector $c \in \mathbb{R}^{\dimR}$ and continuous function $\gamma : \mathbb{R}^d \rightarrow \mathbb{R}^d$ that is monotone with respect to $\mathbb{R}_{\dtuple}^{\dimR}$.  When $\alpha = (+1, +1, \ldots, +1)$, the set $\mathcal{A}_{\dtuple}$ includes all (semi-infinite) rectangles of the form $(-\infty, c_1] \times (-\infty, c_2] \times \ldots \times (-\infty, c_\dimR]$, for $c \in \mathbb{R}^{\dimR}$, but also includes other sets as well. 

For a closed and bounded Borel set $I \subset \mathbb{R}^\dimR$ (which, in practice, we will take as a rectangle), and two (Borel) probability measures $\mu, \nu \in \mathscr{P}(I)$, the metric of Bhattacharya and Lee is
\begin{equation}\label{Def:d_BhatLeeMetric}
    d_{\dtuple}(\mu,\nu) := \sup_{A \in \mathcal{A}_{\dtuple}}  |\mu(A \cap I )-\nu(A \cap I ) | \, .
\end{equation}
For closed and bounded $I$, the metric space $(\mathscr{P}(I), d_{\dtuple})$ is complete \cite{Bhattacharya88, BhattacharyaLee1997}.

In dimension $\dimR = 1$, $d_{\dtuple}(\mu,\nu)$ reduces to the Komolgorov distance (which we write as $d_F$): 
\begin{equation*}
   d_F( \mu,\nu) = \| F_{\mu}(x) - F_{\nu}(x) \|_{\infty} \, \qquad \mu, \nu \in \mathscr{P}(I) \quad (I = [a,b]) \, , 
\end{equation*}
where $F_{\mu}(x) := \mu\left( [a,x] \right)$ is the cumulative distributions function (CDF) and
\begin{align*}
    \| f \|_{\infty} = \sup_{x \in [a,b]} |f(x) |\, .
\end{align*}

The metric $d_{\dtuple}$ is stronger than the Wasserstein metric and weak convergence. The converse is not true, e.g., in $\dimR = 1$, $\mu_k = \delta_{\frac{1}{k}}$ converges weakly to $\delta_0$ but not with respect to $d_F$. On the other hand, $d_{\dtuple}$ is weaker than the total variation metric $d_{\rm TV}(\mu,\nu)$ which is defined as \eqref{Def:d_BhatLeeMetric} where the supremum is taken over all sets in the Borel $\sigma$-algebra. 

The metric $d_{\dtuple}$ generalizes naturally to all finite non-negative measures $\mathscr{M}(I)$ with mass $|\mu|$. The following properties also hold:
    \begin{equation}\label{d_F_less1}
        d_{\dtuple}(\mu,\nu) \leq \max\{ |\mu|, |\nu|\} \, , \qquad (\mu, \nu \in \mathscr{M}(I)) \, ;
    \end{equation}     
    and, for all $\mu_1, \mu_2, \nu_1, \nu_2 \in \mathscr{M}(I)$ and $c>0$,
    \begin{align} \label{dF_split11}
                d_{\dtuple}(\mu_1 + \mu_2,\nu_1+ \nu_2) & \leq |\mu_1| + |\nu_1| + d_\dtuple(\mu_2,\nu_2) \, , \\ 
                \label{d_F_tri}
                  d_{\dtuple}(\mu_1 + \mu_2,\nu_1+ \nu_2) & \leq d_\dtuple(\mu_1,\nu_1)    + d_\dtuple(\mu_2,\nu_2) \, , 
      \\ \label{dF_split1}
                d_{\dtuple}( c\, \mu_1  , c \, \nu_2) &=  c \,   d_{\dtuple}(\mu_1,\nu_1) \,.
        \end{align}
    The identities \eqref{d_F_less1}--\eqref{dF_split1} follow directly from the definition of $d_{\dtuple}$ and the triangle inequality.
    
    To represent trajectories taken by iterates of SGD, let
    \begin{align*}
        \vec{\findex} = \begin{pmatrix}
        \findex_1, & \findex_2, & \ldots \, , & \findex_{m-1}, & \findex_m
    \end{pmatrix} \in [n]^m \, ,
    \end{align*}
where each $\findex_k \in [n]$ $(1\leq k \leq m)$ and $|\vec{i}| = m$ is the \emph{path length}. The \emph{path} defined by $\vec{i}$ is the composition
\begin{align*}
    \varphi_{\, \vec{\findex} \, }(x) := \varphi_{\findex_m}\big( \cdots \varphi_2\big( \varphi_1(x) \big) \cdots \big) = \varphi_{\findex_m} \circ \varphi_{\findex_{m-1}} \circ \cdots \circ \varphi_{\findex_1}(x) \, . 
\end{align*}
Two paths are distinct, $\vec{i} \neq \vec{j}$, if they differ in at least one entry.  For two paths $\vec{j}$ and $\vec{i}$ with lengths $|\vec{j}| = m_1$ and $|\vec{i}| = m_2$ we write 
\begin{align*}
    \vec{i} \circ \vec{j} =    
    \begin{pmatrix}
        j_1, & \ldots \; , & j_{m_1}, & i_1 , & \ldots \; , & i_{m_2} 
    \end{pmatrix}
\end{align*}
to be the concatenation of the paths, so that
\begin{align*}    
    \varphi_{ \, \vec{i} \circ \vec{j} \, }(x) := 
    \varphi_{\, \vec{i}\,} \big( \varphi_{\, \vec{j} \, }(x) \big) = 
    \varphi_{\, \vec{i}\,} \circ \varphi_{\, \vec{j} \, }(x) \, . 
\end{align*}

\section{Markov Operators for Iterated Function Systems with Monotone Maps}\label{Sec:MarkovCvg}
We state here a result of Dubins and Freedman \cite{Dubins66} and the higher dimensional extension by Bhattacharya and Lee \cite{Bhattacharya88, BhattacharyaLee1997}; note that a similar result was independently proven by Hopenhayn and Prescott \cite{Hopenhayn87} (cf. \cite{HopenhaynPrescott92}). The theorems yield sufficient conditions for convergence of Markov operators arising from iterated function systems with monotone maps.  The original theorems are stated in greater generality than we need, and so we state them as they would be applied to the notation and dynamics in \eqref{Eq:SGDIterates}. 

\begin{theorem}[Dubins and Freedman {\cite[Theorem 5.10]{Dubins66}}]
\label{Thm:DB_Path}
   Assume $\varphi_i : [a,b] \rightarrow \mathbb{R}$ are continuous and strictly monotone (increasing or decreasing) on $[a,b]$ and that $[a,b]$ is positive invariant. If there exists distinct pairs $i \neq j$, $1 \leq i, j \leq n$, and a point $x \in [a,b]$ for which 
   \begin{equation}
    \label{Con_DF1}
        \varphi_i\big([a,b]\big) \subset[a,x]  \qquad \textrm{and} \qquad 
        \varphi_j\big([a,b] \big) \subset[x,b] \, ,
    \end{equation}
    then there exists a unique invariant measure $\mu^\star$ to the operator $\mathcal{P}$ given in \eqref{Eq:SGD_Markov_Operators}. 
    
    Furthermore, for any measure $\mu$ supported on $[a,b]$ we have, 
    \begin{equation}\label{Eq:DF_Conv_Rate}
        d_F( \mathcal{P} \mu, \mu^\star) \leq  \left(1-\frac{1}{n} \right) d_F(\mu, \mu^\star) \, ,
    \end{equation}
    where $(1-\frac{1}{n})$ is the geometric rate of convergence. 
\end{theorem}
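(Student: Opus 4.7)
The plan is to establish a one-step contraction for $\mathcal{P}$ in the Kolmogorov metric $d_F$, from which existence and uniqueness of $\mu^\star$, together with the estimate \eqref{Eq:DF_Conv_Rate}, follow from the Banach fixed-point theorem. Positive invariance of $[a,b]$ ensures that $\mathcal{P}$ maps $\mathscr{P}([a,b])$ into itself, and $(\mathscr{P}([a,b]), d_F)$ is complete, so once the contraction $d_F(\mathcal{P}\mu, \mathcal{P}\nu) \leq (1 - 1/n)\, d_F(\mu, \nu)$ is in hand, applying it with $\nu = \mu^\star$ yields \eqref{Eq:DF_Conv_Rate}.

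To derive the contraction, I would start from the explicit form \eqref{Eq:SGD_Markov_Operators} and write the CDF as
\begin{equation*}
    F_{\mathcal{P}\mu}(y) = (\mathcal{P}\mu)([a,y]) = \frac{1}{n}\sum_{k=1}^n \mu\big(\varphi_k^{-1}([a,y]) \cap [a,b]\big).
\end{equation*}
Because each $\varphi_k$ is continuous and strictly monotone, it is a homeomorphism from $[a,b]$ onto $\varphi_k([a,b]) \subset [a,b]$, so each intersected preimage is either $\emptyset$, all of $[a,b]$, or a subinterval of the form $[a, \varphi_k^{-1}(y)]$ (when $\varphi_k$ is increasing) or $[\varphi_k^{-1}(y), b]$ (when $\varphi_k$ is decreasing). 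In every case, the $k$-th summand can be expressed using $F_\mu$ evaluated at a single point, and hence $|\mu(\varphi_k^{-1}([a,y]) \cap [a,b]) - \nu(\varphi_k^{-1}([a,y]) \cap [a,b])| \leq d_F(\mu,\nu)$ individually.

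The splitting condition \eqref{Con_DF1} then forces one of these $n$ differences to vanish uniformly in $y$: if $y < x$, the hypothesis $\varphi_j([a,b]) \subset [x,b]$ gives $\varphi_j^{-1}([a,y]) \cap [a,b] = \emptyset$, so both measures assign mass zero and the $j$-th summand in $F_{\mathcal{P}\mu}(y) - F_{\mathcal{P}\nu}(y)$ is zero; if $y \geq x$, the hypothesis $\varphi_i([a,b]) \subset [a,x] \subset [a,y]$ gives $\varphi_i^{-1}([a,y]) \cap [a,b] = [a,b]$, so both measures assign mass one and the $i$-th summand is zero. Summing the remaining at most $n-1$ contributions, each bounded by $d_F(\mu,\nu)$, and dividing by $n$ yields $|F_{\mathcal{P}\mu}(y) - F_{\mathcal{P}\nu}(y)| \leq (1 - 1/n)\, d_F(\mu,\nu)$; taking the supremum over $y \in [a,b]$ gives the required contraction.

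The main technical subtlety will be the decreasing case, where $\mu([\varphi_k^{-1}(y), b]) = 1 - \mu([a, \varphi_k^{-1}(y)))$ naturally involves the left-limit $F_\mu(\varphi_k^{-1}(y)^-)$, which can differ from $F_\mu(\varphi_k^{-1}(y))$ when $\mu$ has atoms. I would verify that $|F_\mu(z^-) - F_\nu(z^-)| \leq d_F(\mu,\nu)$ by taking the limit $w \uparrow z$ in the pointwise bound $|F_\mu(w) - F_\nu(w)| \leq d_F(\mu,\nu)$. Aside from this atom bookkeeping and a careful case split on the monotonicity direction of each $\varphi_k$, the proof is essentially the splitting observation: the hypothesis \eqref{Con_DF1} ensures that for every cross-section one full index drops out of the difference, which is exactly what produces the sharp rate $1 - 1/n$.
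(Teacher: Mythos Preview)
The paper does not prove this theorem: it is quoted as a result from the literature (Dubins and Freedman, with the higher-dimensional version attributed to Bhattacharya and Lee), and no argument is given beyond the citation. So there is nothing in the paper to compare your proposal against.

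That said, your proposal is correct and is essentially the standard argument behind the cited result. The key observation --- that the splitting condition forces exactly one of the $n$ summands in $F_{\mathcal{P}\mu}(y)-F_{\mathcal{P}\nu}(y)$ to vanish at every $y$, with the index switching from $j$ to $i$ as $y$ crosses the splitting point $x$ --- is precisely the mechanism that produces the sharp contraction factor $1-1/n$. Your handling of the decreasing case via left limits of the CDF is also the right bookkeeping. Combined with completeness of $(\mathscr{P}([a,b]),d_F)$ (which the paper likewise cites rather than proves), the Banach fixed-point argument goes through as you describe.
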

Dubins and Freedman referred to \eqref{Con_DF1} as a \emph{splitting} condition since $\varphi_{i}$ and $\varphi_j$  map $[a,b]$ into disjoint subintervals --- split by $x$.  If the maps $\varphi_i$ are monotone increasing, we can re-state \eqref{Con_DF1} as 
\begin{equation}
    \label{Path_cond}
        \varphi_i(b) \leq \varphi_j(a)  \qquad (i \neq j) \, , 
\end{equation}
for some distinct pairs $1 \leq i,j \leq n$ (see \cref{Fig_path}). 

\begin{figure}[!ht]
\centering
\begin{tikzpicture}[]
\draw[latex-latex] (-5,0) -- (5,0) ; 
    \draw[] (-4,-.25) node[below] {$a$} -- (-4,.2);
        \draw[] (4,-.25) node[below] {$b$} -- (4,.2);
                \draw[] (1,-.25) node[below] {$\varphi_j(a)$} -- (1,.2);
              \draw[] (-1,-.25) node[below] {$\varphi_i(b)$} -- (-1,.2);

\path[->]
    (-4,0) edge[bend left] node [left] {} (.9,0.2);
    \path[->]
    (4,0) edge[bend right] node [left] {} (-.9,0.2);
\end{tikzpicture}
\caption{Visualization of condition \eqref{Con_DF1} and \eqref{Path_cond}.}
\label{Fig_path}
\end{figure}
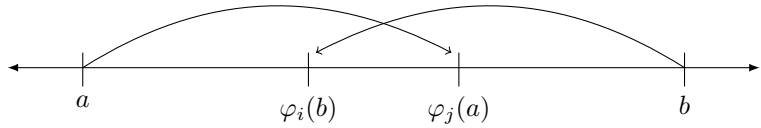

It may be that no pair of $i, j$ satisfy \eqref{Path_cond}. In this case, \cref{Thm:DB_Path} can be applied to powers of the operator $(\mathcal{P})^\ell$ where \eqref{Path_cond} (equivalently \eqref{Con_DF1}) can be replaced with a pair of distinct paths of the same length satisfying 
\begin{equation}
 \label{Path_condition}
  \varphi_{ \, \vec{i} \, }(b) \leq \varphi_{\, \vec{j} \, }(a)   \qquad (\vec{i} \neq \vec{j}) 
  \qquad |\vec{i}| = |\vec{j}| = \ell \, .  
\end{equation}
Inequality~\eqref{Path_condition} is exactly \eqref{Path_cond} with $\{ \varphi_i \; : \; i \in [n]\}$ replaced by $\{ \varphi_{\vec{i}} \; : \; \vec{i} \in [n]^\ell \}$.  Applying \cref{Thm:DB_Path} with condition \eqref{Path_condition} in lieu of \eqref{Con_DF1} modifies the geometric factor appearing in the convergence rate \eqref{Eq:DF_Conv_Rate} to $(1- n^{-\ell})$. 

The result of Bhattacharya and Lee extends \cref{Thm:DB_Path} to dimensions $\dimR > 1$ where the splitting and monotonicity conditions are with respect to a cone. 

\begin{theorem}[Bhattacharya and Lee {\cite[Theorem 2.1 \& Corollary  2.4]{Bhattacharya88}} ]
\label{Bhat_Lee}
    Set $I = [0,1]^{\dimR}$ and let $\dtuple$ be fixed. Suppose $\{ \varphi_i\}_{i=1}^n : I \rightarrow I$ are each continuous and monotone with respect to $\mathbb{R}_{\dtuple}^{\dimR}$.     
    If there are two paths $\vec{p}_1$ and $\vec{p}_2$ of length $\ell$ satisfying 
    \begin{equation}\label{Eq:BL_PathCond}
        \varphi_{\vec{p}_1}( I ) \preceq_{\dtuple} x_0 \qquad \textrm{and} \qquad 
        x_0  \preceq_\dtuple \varphi_{\vec{p}_1}( I ) 
    \end{equation}
  for some $x_0 \in I$, then the Markov operator $\mathcal{P}$ has exactly one invariant measure $\mu^\star$ with support contained in $I$. In addition, for any $\mu$ supported on $I$ 
  \begin{equation}\label{Eq:BL_ErrCvg}
      d_\dtuple ( {\mathcal{P}}^{k} \mu, \mu^\star) \leq \left( 1 - \frac{1}{n^{\ell}} \right)^{\lfloor k/\ell \rfloor} \, \qquad \quad k > 0 \, .
  \end{equation}  
\end{theorem}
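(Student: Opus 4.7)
The plan is to reduce the statement to a per-coordinate uniqueness claim and then exploit the product structure of the absorbing rectangles established in \cref{Main_thm_basin}. Since $T_{\Tindexd} = T_{\Tindex_1}^{(1)} \times \cdots \times T_{\Tindex_d}^{(d)}$ ranges over $\Tindexd \in \mathcal{\M} = [\M_1] \times \cdots \times [\M_d]$, and part (b) of the main theorem associates exactly one invariant measure $\mu_{\Tindexd}^\star$ to each absorbing rectangle (and asserts that these are the only invariant measures on $I$), uniqueness of $\mu^\star$ is equivalent to $|\mathcal{\M}| = \prod_{\dimindex=1}^\dimR \M_\dimindex = 1$. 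It therefore suffices to prove the following one-dimensional claim: for each coordinate $\dimindex \in [\dimR]$, if some $f_{\findex^\star}^{(\dimindex)}$ has a unique critical point, then $\M_\dimindex = 1$.

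For the one-dimensional claim, fix $\dimindex$, drop the superscript, and let $c$ denote the unique critical point of $f_{\findex^\star}$. By Assumption~\ref{A3}, $f_{\findex^\star}(x) \to \infty$ as $|x| \to \infty$, so $c$ is the global minimizer of $f_{\findex^\star}$; this forces $f_{\findex^\star}'(x) < 0$ for $x < c$ and $f_{\findex^\star}'(x) > 0$ for $x > c$. Directly from the definitions \eqref{L_def}--\eqref{R_def} it follows that
\begin{equation*}
    (-\infty, c) \subset R \qquad \text{and} \qquad (c, \infty) \subset L.
\end{equation*}
Note moreover that $L$ and $R$ are open sets, being unions of preimages of $(0,\infty)$ and $(-\infty,0)$ under the continuous derivatives $f_\findex'$.

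Next, let $T_\Tindex = [l, r]$ be any absorbing interval as in \cref{Def:SetsTj}. I claim $l \leq c \leq r$. Indeed, if $l > c$ then $l \in (c, \infty) \subset L$; since $L$ is open, $l$ lies in its interior, contradicting $l \in \partial L$. The symmetric argument shows that $r < c$ would give $r \in \inter R$, contradicting $r \in \partial R$. Hence every absorbing interval contains $c$. Since \cref{Prop:FiniteTs} guarantees the $T_\Tindex$'s are pairwise disjoint, there is at most one such interval; combined with the existence of at least one absorbing interval supplied by part (a)(ii) of \cref{Main_thm_basin}, this forces $\M_\dimindex = 1$.

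I do not foresee a substantive obstacle: the only step requiring care is correctly combining the boundary conditions $l \in \partial L$, $r \in \partial R$ with the openness of $L$ and $R$, which renders the ``each $T_\Tindex$ contains $c$'' conclusion immediate from the sign structure of $f_{\findex^\star}'$. The remainder is bookkeeping via the product structure of $\mathcal{\M}$ and the one-to-one correspondence between absorbing rectangles and invariant measures furnished by \cref{Main_thm_basin}.
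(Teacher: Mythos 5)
You have proven the wrong statement. The theorem you were asked to address is the Bhattacharya--Lee result (\cref{Bhat_Lee}), which concerns an \emph{arbitrary} iterated function system of continuous maps on $[0,1]^\dimR$ that are monotone with respect to a fixed cone: the hypothesis is the splitting condition \eqref{Eq:BL_PathCond}, and the conclusion is uniqueness of the invariant measure together with the geometric convergence rate \eqref{Eq:BL_ErrCvg} in the metric $d_\dtuple$. This is a cited result from the literature; the paper restates it (with minor notational adjustments) precisely so it can be \emph{invoked} in the proof of \cref{Main_thm_basin}\ref{Main_b}. Your argument, by contrast, proves \cref{Coro_unique}: under the additional hypothesis that in each coordinate some $f_{\findex^\star}^{(\dimindex)}$ has a unique critical point, there is exactly one absorbing rectangle and hence a unique invariant measure.

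Beyond the mismatch, the approach cannot be repaired into a proof of \cref{Bhat_Lee}. First, your argument is circular: you appeal to \cref{Main_thm_basin}\ref{Main_b} (that each $T_\Tindexd$ supports exactly one invariant measure and that these exhaust the invariant measures), but the paper proves that very statement by \emph{applying} \cref{Bhat_Lee} to each $T_\Tindexd$. Second, \cref{Bhat_Lee} is a statement about general monotone IFS --- the maps $\varphi_i$ need not come from gradients of functions, there are no sets $L$, $R$, or $T_\Tindex$, no Assumptions \ref{A1}--\ref{A5}, and no separable structure. The product decomposition $\mathcal{\M} = [\M_1] \times \cdots \times [\M_d]$ you lean on simply does not exist in that setting. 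Third, even restricted to the SGD setting, \cref{Bhat_Lee} contains the quantitative geometric contraction estimate in $d_\dtuple$, which your counting argument does not address at all. A correct answer here would either reproduce the Dubins--Freedman/Bhattacharya--Lee coupling argument (showing that the splitting condition forces the kernels from any two initial points to overlap with probability at least $n^{-\ell}$ over $\ell$ steps, contracting $d_\dtuple$), or simply note that the result is quoted from \cite{Bhattacharya88} and explain the minor translation (the orthont change of variables $\varphi \mapsto \Lambda\varphi(\Lambda\,\cdot\,)$ with $\Lambda = \diag(\dtuple)$, and the passage from $\delta_x$ to general $\mu$).

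As a separate observation, your one-dimensional argument for \cref{Coro_unique} is itself sound and essentially matches the paper's proof of that corollary, so it would be worth keeping --- just attached to the right statement.
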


There are a few minor differences from the exact theorem statement \cite[Theorem 2.1 \& Corollary  2.4]{Bhattacharya88} and the version we state in \cref{Bhat_Lee}.  In \cite{Bhattacharya88}, \eqref{Eq:BL_ErrCvg} is stated with $\mu$ replaced by $\delta_x$ (the upper bound in \eqref{Eq:BL_ErrCvg} being uniform in $x \in I$). This is equivalent to \eqref{Eq:BL_ErrCvg} as written above. 

The formulation in \cite{Bhattacharya88} also allows for the maps $\varphi_i$ to be drawn from an infinite index set.  In this case, the convergence rate \eqref{Eq:BL_ErrCvg} (or \eqref{Eq:DF_Conv_Rate}) is given in terms of the probability that the event \eqref{Eq:BL_PathCond} holds. If $\kappa$ pairs of distinct paths satisfy \eqref{Eq:BL_ErrCvg}, then \cite{Bhattacharya88} yields the stronger geometric rate of $1 - \kappa/n^{\ell}$ in \eqref{Eq:BL_ErrCvg}.  


Lastly, \cite[Theorem 2.1 \& Corollary  2.4]{Bhattacharya88} is stated only for $\alpha$ being the positive orthont. However, the theorem extends trivially to the version stated in \cref{Bhat_Lee} defined over any cone $\alpha$ by the change of variables: $\varphi(x) \rightarrow \Lambda \varphi( \Lambda x)$ where $\Lambda = \textrm{diag}(\alpha)$.

\section{A Few Lemmas for One Dimensional SGD}\label{Sec:Lemmas1d_SGD}
Throughout this section we will assume that $d=1$ and build up a series of results for SGD. For notational convenience we refrain from writing superscripts on the functions $f_i$, the maps $\varphi_i$, and the sets $T_\Tindexd$, e.g., we replace
\begin{align*}
    f_i^{(1)} \rightarrow f_i   \qquad
    \varphi_i^{(1)} \rightarrow \varphi_i \quad (1 \leq \findex \leq n)  \qquad \textrm{and} \qquad 
    T_\Tindexd^{(1)} \rightarrow T_{\Tindex} \quad (1 \leq \Tindex \leq \numT) \, .
\end{align*}
Thus, in assumptions \ref{A1}--\ref{A5} each $ f_i^{(1)}$ is simply $f_i$. 

\subsection{Properties of the Sets \texorpdfstring{$L$}{L}, \texorpdfstring{$R$}{L}, and \texorpdfstring{$T_\Tindex$}{Lg}}\label{Sec:PropertiesLRT}
Here our goal is to collect and prove basic properties of the sets $L$, $R$ and $T_\Tindex$ (see \eqref{L_def}, \eqref{R_def}, and \cref{Def:SetsTj}). 

\begin{proposition}[Basic properties of $L$ and $R$]\label{Prop:BasicLR} 
Let $d=1$. Given assumptions \ref{A1}--\ref{A4} and $\leftint, \rightint$ defined by $I$ in \eqref{I_state_space}, then sets $L$ and $R$ defined in \eqref{L_def} and \eqref{R_def} are finite unions of open intervals with $\partial L, \partial R \subset I$ and
\begin{align}\label{Eq:SetInclusion}
    (-\infty, a) \subset R \setminus L \qquad \textrm{and} \qquad (b, \infty) \subset L \setminus R \, .
\end{align}

If in addition \ref{A5} holds, then 
\begin{align}\label{Eq:SetCover}
    \partial L \cap \partial R = \phi \qquad \textrm{and} \qquad L \cup R = \mathbb{R} \, . 
\end{align}
\end{proposition}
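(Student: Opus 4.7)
The approach is to unpack the definitions of $L$ and $R$ using assumptions \ref{A1}--\ref{A5} one at a time, handling each conclusion separately. The main tools are that $L$ and $R$ are open (since $f_i'$ is continuous), that each non-zero $f_i$ has only finitely many critical points, and that $f_i \to \infty$ forces the sign of $f_i'$ outside a bounded set.

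First I would observe that by \ref{A1} the set $\{x : f_i'(x) > 0\}$ is open for each $i$, so $L$ and $R$ are open as finite unions of open sets. By \ref{A2} each non-zero $f_i'$ has finitely many zeros, so $\{x : f_i'(x) > 0\}$ is a finite union of open intervals (its complement in $\mathbb{R}$ is a finite union of closed sets where $f_i' \leq 0$); hence $L$ (and similarly $R$) is a finite union of open intervals after merging.

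Next I would argue the set inclusions \eqref{Eq:SetInclusion}. For $x < a$, no non-zero $f_i$ has a critical point in $(-\infty, a)$ (by definition of $a$ as the smallest element of $\mathcal{C}$), so $f_i'$ has constant sign there. Assumption \ref{A3} forces $f_i$ to be eventually decreasing as $x \to -\infty$, so that constant sign must be negative. Thus every non-zero $f_i$ contributes $(-\infty, a)$ to $R$ and nothing to $L$, giving $(-\infty, a) \subset R \setminus L$. The argument for $(b, \infty) \subset L \setminus R$ is symmetric. As a consequence, $\partial L \subset [a,b]$: if $x \notin [a,b]$, a small neighborhood lies in $(-\infty,a) \cup (b,\infty)$, and on each of these pieces $L$ is either all of the interval or empty, so $x$ cannot be a boundary point of $L$. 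The same reasoning gives $\partial R \subset I$.

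Finally, under \ref{A5} I would show $L \cup R = \mathbb{R}$ and $\partial L \cap \partial R = \phi$. For every $x \in \mathbb{R}$, \ref{A5} provides some $i$ with $f_i'(x) \neq 0$ (note non-zero $f_i$ are implicit since a zero function trivially has every point as a critical point, so \ref{A5} implicitly requires at least one non-zero $f_i$ disagreeing at $x$), placing $x$ in $L$ or $R$; hence $L \cup R = \mathbb{R}$. For the disjoint-boundary claim, if $x \in \partial L$ then $x \notin L$ (since $L$ is open), so $x \in R$ by the covering property, and therefore $x \notin \partial R$ (since $R$ is open). Thus $\partial L \cap \partial R = \phi$.

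No step looks genuinely hard; the only subtlety is being careful that the definitions of $L, R$ range over all $i$ including those with $f_i \equiv 0$, for which $\{f_i' > 0\} = \{f_i' < 0\} = \emptyset$ and thus contribute nothing — so the arguments really only use the non-zero $f_i$, which is exactly the class for which \ref{A1}--\ref{A3} are assumed.
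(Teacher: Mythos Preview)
Your proposal is correct and follows essentially the same route as the paper: use continuity and the finiteness of critical points to get the open-interval structure, use \ref{A3} to force the sign of $f_i'$ on $(-\infty,a)$ and $(b,\infty)$, and use \ref{A5} for the covering. The only minor difference is in the argument for $\partial L \cap \partial R = \phi$: the paper observes directly that $x \in \partial L \cap \partial R$ forces $f_i'(x) \leq 0$ and $f_i'(x) \geq 0$ for all $i$, contradicting \ref{A5}, whereas you deduce it from openness together with the already-established covering $L \cup R = \mathbb{R}$ --- both are equally short and valid.
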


\begin{proof}
    The sets $L$ and $R$ are finite unions of open intervals since each $f'_i$ is continuous with a finite number of roots. The boundaries $\partial L, \partial R$ are contained in the set of critical points $\mathcal{C}$ of the $f^{\prime}_i$'s and confined to $I$. 
    
    The assumptions \ref{A1}--\ref{A3}, with $d=1$, imply \eqref{Eq:SetInclusion} since
    \begin{align*}
        f^\prime_i(x) < 0 \quad \textrm{if} \quad x < a \quad \textrm{for all} \quad 1\leq i \leq n \, ,   
    \end{align*}
    and similarly $f^\prime_i(x) > 0$ for $x > b$. Lastly, $x \in \partial L \cap \partial R$ implies $f_i'(x) = 0$ for all $i$ violating \ref{A5}. 
    Since for every $x \in \mathbb{R}$ from \ref{A5}, either $f_i^\prime(x) < 0$ or $f^\prime_i(x) > 0$ for some $i$. Thus, if $x \notin L$, then $x \in R$ and vice versa.     
\end{proof}

The next proposition in this section establishes basic properties of the sets $T_\Tindex$. 

\begin{proposition}[Properties of $T_\Tindex$]\label{Prop:FiniteTs} Let $d=1$. Under assumptions \ref{A1}--\ref{A5}, the sets $T_\Tindex = [l_\Tindex, r_\Tindex]$ in \cref{Def:SetsTj} satisfy the following:
\begin{enumerate}
    \item [(a)] There is at least one $T_\Tindex$, i.e., $\numT \geq 1$.
    \item [(b)] The endpoints satisfy
        \begin{align}\label{Eq:EndpointProp}
            r_\Tindex \in L \, , \quad r_\Tindex \notin R \qquad \textrm{and} \qquad l_\Tindex \in R \, ,  \quad l_\Tindex \notin L \qquad \textrm{for all} \quad 1 \leq \Tindex \leq \numT \, . 
        \end{align}
    \item [(c)] Each $T_\Tindex$ contains at least one local minimum of $F$. In particular, $\numT \leq M_F$ where $M_F$ is the number of local minima of $F$.
    \item [(d)] The sets $T_\Tindex$ are pairwise disjoint and contained in $I$.
\end{enumerate}
\end{proposition}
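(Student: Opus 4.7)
The plan is to prove the four parts in the order (b), (d), (a), (c), relying throughout on \cref{Prop:BasicLR} which guarantees $L, R$ are open (finite unions of open intervals), $L \cup R = \mathbb{R}$, $\partial L \cap \partial R = \phi$, and $\partial L \cup \partial R \subset I$, together with the tail inclusions $(-\infty, a) \subset R \setminus L$ and $(b, \infty) \subset L \setminus R$. I expect parts (b) and (d) to be short structural consequences of \cref{Prop:BasicLR}; the real work is in establishing existence in part (a).

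For part (b), if $l_{\Tindex} \in \partial L$ then openness of $L$ forces $l_{\Tindex} \notin L$, after which $L \cup R = \mathbb{R}$ gives $l_{\Tindex} \in R$, and $\partial L \cap \partial R = \phi$ gives $l_{\Tindex} \notin \partial R$; the argument for $r_{\Tindex}$ is symmetric. For part (d), containment $T_{\Tindex} \subset I$ is immediate from $l_{\Tindex}, r_{\Tindex} \in \partial L \cup \partial R \subset I$. For disjointness I would suppose $T_{\Tindex} \cap T_{\Tindex'} \neq \phi$ with $\Tindex \neq \Tindex'$ and, assuming without loss of generality $l_{\Tindex} \leq l_{\Tindex'}$, rule out the three subcases: $l_{\Tindex'} \in (l_{\Tindex}, r_{\Tindex}) \subset L \cap R$ contradicts $l_{\Tindex'} \in \partial L$ by openness of $L$; $l_{\Tindex'} = r_{\Tindex}$ puts a common point in $\partial L \cap \partial R = \phi$; and $l_{\Tindex'} = l_{\Tindex}$ forces $r_{\Tindex'} = r_{\Tindex}$ since the right endpoint is uniquely determined as the next $\partial R$-point past the left endpoint.

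For part (a), the main obstacle, I would construct a specific $T_{\Tindex}$ via a ``last-transition'' argument. Set $\alpha := \sup(\mathbb{R} \setminus L)$, which lies in $I$ by the tail inclusions. Openness of $L$ gives $\alpha \notin L$, and hence $\alpha \in R$ by $L \cup R = \mathbb{R}$; by maximality of $\alpha$, $(\alpha, \infty) \subset L$, which puts $\alpha \in \partial L$. Next let $\beta := \sup\{x > \alpha : (\alpha, x) \subset R\}$. Openness of $R$ at $\alpha$ makes this set non-empty, while $(b, \infty) \cap R = \phi$ bounds it, so $\alpha < \beta \leq b$. If $\beta \in R$ were true, openness of $R$ would extend the interval past $\beta$ and contradict the supremum, so $\beta \in \partial R$; and $\beta > \alpha$ together with $(\alpha, \infty) \subset L$ gives $\beta \in L$. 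Thus $(\alpha, \beta) \subset L \cap R$ with $\alpha \in \partial L$ and $\beta \in \partial R$, so $[\alpha, \beta]$ meets the definition of a $T_{\Tindex}$, proving $\numT \geq 1$.

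Finally, for part (c), I would translate the endpoint characterization from (b) into signs of $F'$. At $l_{\Tindex}$: $l_{\Tindex} \notin L$ means $f_i'(l_{\Tindex}) \leq 0$ for every $i$, while $l_{\Tindex} \in R$ means $f_j'(l_{\Tindex}) < 0$ for some $j$, so $F'(l_{\Tindex}) = \frac{1}{n}\sum_i f_i'(l_{\Tindex}) < 0$; symmetrically $F'(r_{\Tindex}) > 0$. The continuous $F$ attains its minimum on the compact $[l_{\Tindex}, r_{\Tindex}]$, and the strict signs of $F'$ at the endpoints rule them out as minimizers, so the minimum is interior, which makes it a local minimum of $F$ on $\mathbb{R}$. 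Combining with the disjointness established in (d), each $T_{\Tindex}$ contributes a distinct local minimum of $F$, yielding $\numT \leq M_F$.
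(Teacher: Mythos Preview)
Your proposal is correct and follows essentially the same route as the paper: part (b) via openness of $L,R$ and $L\cup R=\mathbb{R}$, part (c) via the sign of $F'$ at the endpoints, part (a) by constructing the rightmost $\partial L$-point and taking the right endpoint of the $R$-component through it, and part (d) by a short contradiction using (b). The only cosmetic differences are your ordering (b,d,a,c versus the paper's a,b,c,d) and your disjointness case analysis on the left endpoint, whereas the paper argues in one line that some $r_{\Tindex}$ would land in $[l_j,r_j)\subset R$, contradicting $r_{\Tindex}\notin R$.
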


\begin{proof}    
\medskip
\noindent 
(a) Through direct construction, we show $\numT \geq 1$.  From identity \eqref{Eq:SetInclusion} in \cref{Prop:BasicLR}, the set $L$ contains an interval of the form $(l, \infty)$ where $l \in \partial L$ is finite (in fact $l \leq b$).  Since equation \eqref{Eq:SetCover} implies $R$ and $L$ cover $\mathbb{R}$, $R$ contains an interval of the form $(\bar{r}, r)$ with $\bar{r} < l < r$, where $r \in \partial R$ is finite. Thus $(l, r)$ satisfies Definition~\ref{Def:SetsTj}.

\medskip
\noindent 
(b) Since $L$ and $R$ are open (see \cref{Prop:BasicLR}) they do not contain their boundary points, hence $r_m \notin R$ and $l_m \notin L$. Since $L \cup R = \mathbb{R}$ condition \eqref{Eq:EndpointProp} holds.

\medskip
\noindent 
(c) We show that $F^\prime (l_{\Tindex}) < 0$ and $F^\prime(r_{\Tindex}) > 0$. This implies that $\textrm{argmin}_{x \in T_{\Tindex}} F(x)$ lies in the interior of $T_{\Tindex}$, and hence must be a local minimizer. 

Since $l_{\Tindex} \notin L$ and $r_{\Tindex} \notin R$ from part (b), we have
\begin{align}\label{Eq:fprimeIneq} 
    f_i^\prime(l_{\Tindex})  \leq  0  \qquad \textrm{and} \qquad f_i^\prime(r_{\Tindex})  \geq  0
    \qquad 
    \textrm{for all} \; 1 \leq i \leq n \, ,
\end{align}
where each of the inequalities in \eqref{Eq:fprimeIneq} are strict for at least one $i$ due to Assumption~\ref{A5}. The sign of $F^\prime$ at $l_\Tindex$ and $r_\Tindex$ then follows from the definition of $F$. 

\medskip
\noindent 
(d) First note the endpoints $l_\Tindex, r_\Tindex$ are confined to the critical points of the $f^\prime_\findex$'s and hence must lie in $I$.  To show the sets $T_\Tindex$ are pairwise disjoint, suppose by contradiction that $T_\Tindex \neq T_j$ and $T_\Tindex \cap T_j \neq \phi$. Since the sets are distinct closed intervals, an endpoint of one interval must lie in the other and also differ from the same endpoint, i.e., without loss of generality the right endpoint of $T_\Tindex$ must differ from the right endpoint of $T_j$ and also intersect $T_j$ so that
\begin{align*}
    r_\Tindex \in [l_j, r_j)  \, .     
\end{align*}
The definition of $T_j$ implies $(l_j, r_j) \subset R$ while part (b) implies $l_j \in R$. Thus, $r_\Tindex \in [l_j, r_j) \subset R$, however this contradicts (b).
\end{proof}

\subsection{Properties Related to the SGD Dynamics}\label{Subsec:Properties1D_dynamics}
The next proposition proves the fact that intervals of $L$ and $R$ bound the regions for which SGD can move to the left and right respectively.
\begin{proposition}[Dynamics related to $L$ and $R$]\label{Prop:LeftRight}
  Let $d=1$.  Assume \ref{A1}--\ref{A3} hold with $L$ and $R$ defined in \eqref{L_def}--\eqref{R_def}, and let $0 < \eta < 1/\LipK$.  Then the maps $\varphi_{i}$ are monotone increasing on $I$, i.e.,
    \begin{align*}
        \varphi_i(x) < \varphi_i(y) \qquad \textrm{for all} \quad x<y \in I \quad 
        \textrm{and} \quad 1 \leq i \leq n \, . 
    \end{align*}
    
    Furthermore, if $(l, r] \subset L \cap I$ with $l \in \partial L$, there exists paths that map $r$ arbitrarily close to $l$ (but no further):  
        \begin{align}\label{Eq:LiminfL}
            \inf_{\vec{p} \in Q} \varphi_{\vec{p} \, }(r) = l \, .
        \end{align}
        Analogously, $[l, r) \subset R \cap I$ with $r \in \partial R$ then 
        \begin{align}\label{Eq:LimsupR}
            \sup_{\vec{p} \in Q} \varphi_{\vec{p} \, }(l) = r \, .
        \end{align}
        Here $Q$ is the set of all paths of arbitrary length.
\end{proposition}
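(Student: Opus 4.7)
The plan has three parts. First, I would prove monotonicity of $\varphi_i$ on $I$. For any $x<y$ in $I$ and any $1\leq i \leq n$,
\begin{equation*}
    \varphi_i(y)-\varphi_i(x) = (y-x) - \eta\bigl(f_i'(y)-f_i'(x)\bigr) \geq (1-\eta K)(y-x) > 0,
\end{equation*}
using Assumption \ref{A4} and $\eta < 1/K$. Monotonicity of compositions $\varphi_{\vec{p}\,}$ follows immediately.

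Next I would establish the lower bound $\varphi_{\vec{p}\,}(r) \geq l$ for every path $\vec{p} \in Q$. The key observation is that $l \in \partial L$ together with the fact that $L$ is open (\cref{Prop:BasicLR}) implies $l \notin L$; by \eqref{not_inL} this gives $\varphi_i(l)\geq l$ for every $i$. Since $r \geq l$, monotonicity yields $\varphi_i(r) \geq \varphi_i(l) \geq l$, and by induction on path length $\varphi_{\vec{p}\,}(r)\geq l$ for every $\vec{p}$. Hence $\inf_{\vec{p}}\varphi_{\vec{p}\,}(r) \geq l$.

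The main (and most interesting) step is to show the infimum is exactly $l$, for which I would construct a concrete sequence of paths driving $r$ down to $l$. Set $x_0 := r$ and inductively $x_{k+1} := \min_{1\leq i \leq n} \varphi_i(x_k)$; let $i_k$ be an index achieving the minimum, so $x_k = \varphi_{i_{k-1}} \circ \cdots \circ \varphi_{i_0}(r)$ is realized by an actual path. By the preceding step $x_k \geq l$ for all $k$, and since every $x_k \in (l,r]$ lies in $L$, there exists some $j$ with $f_j'(x_k) > 0$, so $\varphi_j(x_k) < x_k$ and in particular $x_{k+1} \leq \varphi_j(x_k) < x_k$. Thus $\{x_k\}$ is strictly decreasing and bounded below by $l$, hence converges to some $x^\star \geq l$. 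To conclude $x^\star = l$, suppose for contradiction that $x^\star > l$. Then $x^\star \in (l,r]\subset L$, so some $f_{i^\star}'(x^\star) > 0$, giving $\varphi_{i^\star}(x^\star) < x^\star$. By continuity of $\varphi_{i^\star}$ (Assumption \ref{A1}) there exist $\delta>0$ and $K_0$ with $\varphi_{i^\star}(x_k) \leq x^\star - \delta$ for all $k\geq K_0$, so $x_{k+1} \leq x^\star -\delta$, contradicting $x_k \downarrow x^\star$. Therefore $x^\star = l$, establishing \eqref{Eq:LiminfL}. The identity \eqref{Eq:LimsupR} follows by an identical argument with inequalities and min/max reversed (alternatively, by applying the $L$-statement to the reflected dynamics $x\mapsto -x$, $f_i \mapsto f_i(-\cdot)$, which swaps the roles of $L$ and $R$). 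The main obstacle is the convergence argument for $x^\star = l$; everything else is routine monotonicity bookkeeping.
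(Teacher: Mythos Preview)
Your proof is correct and follows essentially the same approach as the paper: monotonicity from the Lipschitz bound, the lower bound $\varphi_{\vec p\,}(r)\ge l$ from $l\notin L$ plus monotonicity, and a contradiction argument if the infimum were strictly above $l$. The only stylistic difference is in the contradiction step: you build a greedy decreasing sequence and use continuity at its limit, whereas the paper introduces $\Delta(x)=\max_i f_i'(x)$ and uses compactness of $[\alpha,r]$ to obtain a uniform decrement $\eta\Delta_0$---both arguments are standard and equivalent here.
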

 
\begin{proof} The monotonicity of $\varphi_i$ follows from
\begin{align*}
    \varphi_i(x) < \varphi_i(y) \qquad \Longleftrightarrow \qquad 
    f^{\prime}_i(y) - f^{\prime}_i(x) < \eta^{-1} (y - x) \, , 
\end{align*}
combined with the Lipschitz bound on $f^{\prime}_i$ in \ref{A4} and $\eta < 1/\LipK$.

We prove \eqref{Eq:LiminfL} as \eqref{Eq:LimsupR} follows by an identical argument. Since $l \notin L$, we have 
    \begin{align*}
        \varphi_{i}(l) \geq l \qquad \textrm{for all} \quad 1 \leq i \leq n \, .
    \end{align*}
    Combining this with the fact that the maps $\varphi_{\vec{p}}$ are monotone on $(l, r]$ (which is in $I$) implies
    \begin{align*}
        \alpha := \inf_{\vec{p} \in Q} \varphi_{\vec{p} \,}(r) \geq \inf_{\vec{p} \in Q} \varphi_{\vec{p} \, }(l) \geq l \,. 
    \end{align*}
    Suppose now by contradiction that $\alpha > l$. Introduce the function
    \begin{align*}
        \Delta(x) := \max_{1 \leq i \leq n} f^\prime_i(x) \, .
    \end{align*}
    The function $\Delta(x)$ is continuous (since it is the pointwise maximum of a finite collection of continuous functions) and strictly bounded by $\Delta(x) >0$ on $[\alpha, r]$ (since the interval is contained in $L$). By compactness, $\Delta(x)$ achieves its minimum $\Delta_0$ on $[\alpha, r]$ and satisfies 
    \begin{align*}
        \Delta(x) \geq \Delta_0 > 0 \qquad \textrm{for} \qquad x \in [\alpha, r] \, .     
    \end{align*}    
    However, this yields a contraction by the definition of $\alpha$: For every $x \in [\alpha, r]$ there is a map $\varphi_i$ satisfying
    \begin{align*}
        \varphi_i(x) \leq x - \eta \Delta_0 \, ,        
    \end{align*}
    which implies there is a path $\vec{p}$ of finite length for which $\varphi_{\vec{p} \, }(r) < \alpha$.
\end{proof}

We show next that the sets $T_\Tindex$ are positive invariant, or equivalently, absorbing.
\begin{proposition}[$T_\Tindex$ are positive invariant]\label{Prop:1d_Tj_PosInv}
    Let $d=1$.  Assume \ref{A1}--\ref{A5} hold and $0 < \eta < 1/\LipK$.  Then $I$ and each $T_\Tindex$ ($1 \leq \Tindex \leq \numT$) is positive invariant. 
\end{proposition}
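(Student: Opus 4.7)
The plan is to reduce both statements — positive invariance of $I$ and of each $T_\Tindex$ — to a common, very short computation based on the monotonicity of each $\varphi_i$ on $I$ (\cref{Prop:LeftRight}) together with the characterizations \eqref{not_inL}--\eqref{not_inR} of points that fail to lie in $L$ or in $R$.

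First, I would recall that under (A1)--(A4) and $0 < \eta < 1/\LipK$, each $\varphi_i$ is monotone increasing on $I$ by \cref{Prop:LeftRight}. Hence for any closed subinterval $[\alpha, \beta] \subset I$ and any $1 \le i \le n$,
\begin{equation*}
    \varphi_i\bigl([\alpha,\beta]\bigr) \;=\; [\varphi_i(\alpha),\,\varphi_i(\beta)] \,,
\end{equation*}
so positive invariance of $[\alpha, \beta]$ reduces to the two endpoint inequalities $\varphi_i(\alpha) \ge \alpha$ and $\varphi_i(\beta) \le \beta$ for every $i$.

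Next, for each $T_\Tindex = [l_\Tindex, r_\Tindex]$, \cref{Prop:FiniteTs}(b) gives $l_\Tindex \notin L$ and $r_\Tindex \notin R$. By \eqref{not_inL} and \eqref{not_inR}, these mean $\varphi_i(l_\Tindex) \ge l_\Tindex$ and $\varphi_i(r_\Tindex) \le r_\Tindex$ for all $1 \le i \le n$. Combined with the monotonicity observation, this immediately yields $\varphi_i(T_\Tindex) \subset T_\Tindex$, so $T_\Tindex$ is positive invariant. For $I = [a,b]$ I would apply the same reasoning after checking $a \notin L$ and $b \notin R$. This uses the definition of $a$ and $b$ as the smallest and largest elements of $\mathcal{C}$: for each $i$, either $f_i$ has a critical point at $a$ (so $f_i'(a) = 0$), or $a$ lies strictly to the left of every critical point of $f_i$, in which case (A3) forces $f_i' \le 0$ on $(-\infty, a]$ since a $C^1$ function without critical points on an unbounded-below interval and growing at $-\infty$ cannot be increasing there; continuity then gives $f_i'(a) \le 0$. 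In either case $\varphi_i(a) \ge a$. The symmetric argument at $b$ gives $\varphi_i(b) \le b$, so $\varphi_i(I) \subset I$.

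I do not anticipate any real obstacle: the proof is essentially a bookkeeping consequence of monotonicity plus the endpoint facts already established in \cref{Prop:BasicLR} and \cref{Prop:FiniteTs}. The only subtle point is the brief endpoint argument for $I$, which requires combining the extremal-critical-point characterization of $a, b$ with the coercivity (A3) — but this is exactly the content of \eqref{Eq:SetInclusion} in \cref{Prop:BasicLR}, so I would simply cite it to conclude $f_i'(a) \le 0$ and $f_i'(b) \ge 0$ directly and finish in one line.
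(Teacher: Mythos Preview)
Your proposal is correct and follows essentially the same approach as the paper: monotonicity of $\varphi_i$ on $I$ from \cref{Prop:LeftRight}, combined with $l_\Tindex \notin L$, $r_\Tindex \notin R$ from \cref{Prop:FiniteTs}(b) and the characterizations \eqref{not_inL}--\eqref{not_inR}, to get the endpoint inequalities and hence $\varphi_i(T_\Tindex)\subset T_\Tindex$. The paper handles $I=[a,b]$ with the one-line ``by the same argument,'' whereas you spell out why $a\notin L$ and $b\notin R$; your observation that this is precisely the content of \eqref{Eq:SetInclusion} (via continuity of $f_i'$) is exactly right.
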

\begin{proof}
Write $T_\Tindex = [l_\Tindex, r_\Tindex]$. Combining the facts that $l_\Tindex \notin  L$ and $r_\Tindex \notin  R$ (by \cref{Prop:FiniteTs}), $\varphi_i$ is monotone on $T_\Tindex$ (by \cref{Prop:LeftRight}), and \eqref{not_inL} and \eqref{not_inR} yields
\begin{align*}    
    l_\Tindex \leq \varphi_i(l_\Tindex) \leq \varphi_i(x) \leq \varphi_i(r_\Tindex) \leq r_\Tindex     
    \qquad \textrm{for all} \quad 1 \leq i \leq n\, \quad \textrm{and} \quad x \in T_\Tindex \, . 
\end{align*}
Hence,
\begin{equation}
\label{T_j_invar_d1}
    \varphi_i(T_\Tindex) \subset T_\Tindex
\end{equation}
holds for all $i$ and $\Tindex$. By the same argument $I=[a,b]$ is also positive invariant.
\end{proof}

We conclude this section with a proof that there is a fixed path length for which every point $x \in I$ outside $T$ can be mapped into $T$.  This will provide the basis for establishing that the set $B$ is uniformly transient in the main result. 
\begin{lemma}[Uniform path length in one dimension]\label{Lem:1d_Pathlength}
    Let $d=1$.  Assume \ref{A1}--\ref{A5} and $0 < \eta < 1/\LipK$ hold and define $T$ as in \eqref{Eq:T_Union}.  Then there exists a uniform path length $\ell_0$ such that for every $x \in I$ there is a path $\vec{p}$ of length $\ell_0$ satisfying $\varphi_{\vec{p}\,}(x) \in \mathrm{int}\, T$, where $\mathrm{int}\, T$ is the interior of $T$.
\end{lemma}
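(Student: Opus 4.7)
The proof proceeds in three steps: (i) for each $x \in I$, exhibit a finite-length path $\vec{p}_x$ such that $\varphi_{\vec{p}_x}(x) \in \mathrm{int}\, T$; (ii) use continuity and compactness of $I$ to extract a uniform upper bound on the required length; (iii) pad shorter paths to the common length, using that $\mathrm{int}\, T$ is strictly invariant under every $\varphi_i$.

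For step (i), I would case-split on the position of $x$ relative to $L$ and $R$. Since $L \cup R = \mathbb{R}$ by \cref{Prop:BasicLR}, either $x \in L$ or $x \in R \setminus L$. Assume $x \in L$: then $x$ lies in a maximal $L$-interval $(l, r)$ with $l \in \partial L$, and the version of \cref{Prop:LeftRight} with $r$ replaced by $x$ yields paths making $\varphi_{\vec{p}\,}(x)$ arbitrarily close to $l$ from above. If $l = l_m$ for some $m$, then strict monotonicity of each $\varphi_i$ (valid since $\eta K < 1$), together with $\varphi_i(l_m) \geq l_m$ (as $l_m \notin L$), forces $\varphi_{\vec{p}\,}(x) > l_m$ for every finite path; for a sufficiently long path \cref{Prop:LeftRight} pushes $\varphi_{\vec{p}\,}(x) < r_m$, placing the iterate in $(l_m, r_m) = \mathrm{int}\, T_m$. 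If instead $l \notin \{l_m\}_m$, then $l \in R \setminus L$ (via $\partial L \cap \partial R = \phi$ from \cref{Prop:BasicLR}), and I switch strategy: having moved close to $l$, apply the $R$-analog of \cref{Prop:LeftRight} to travel rightward inside a neighboring $R$-component toward some $\partial R$ endpoint. Iterating this alternation of left and right excursions, and using the finiteness of the critical-point partition of $I$, one reaches a $\partial R$ endpoint of the form $r_m$; approaching such an endpoint from the left places the iterate in $(l_m, r_m) = \mathrm{int}\, T_m$. The case $x \in R \setminus L$ is symmetric.

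For step (ii), each $\varphi_{\vec{p}_x}$ is continuous on $I$, so $U_x := \varphi_{\vec{p}_x}^{-1}(\mathrm{int}\, T)$ is an open neighborhood of $x$; compactness of $I$ provides a finite subcover $\{U_{x_k}\}_{k=1}^{N}$ with path lengths $\ell_1, \ldots, \ell_N$, and I set $\ell_0 := \max_k \ell_k$. For step (iii), the key observation is that for $z \in (l_m, r_m)$ and any $i$, strict monotonicity combined with $l_m \notin L$ and $r_m \notin R$ (\cref{Prop:FiniteTs}) yields $l_m \leq \varphi_i(l_m) < \varphi_i(z) < \varphi_i(r_m) \leq r_m$; hence each $\varphi_i$ preserves $\mathrm{int}\, T_m$ (and so $\mathrm{int}\, T$), and one pads the path from step (ii) to exactly length $\ell_0$ by appending arbitrary maps.

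The hard part is step (i), and the main obstacle is ensuring that the alternation between left and right excursions always terminates at a genuine $T_m$ and does not get trapped in an ``anti-$T$'' region, namely a maximal $L \cap R$ interval whose left endpoint lies in $\partial R$ and right endpoint in $\partial L$. Such regions are excluded from $\{T_m\}$ by \cref{Def:SetsTj}, but every point in an anti-$T$ region admits both a strictly left-moving and a strictly right-moving $\varphi_i$, so the iterates can exit such a region in finitely many steps. Combined with the finiteness of the critical-point partition of $I$, this rules out infinite recursion and forces termination at a true $T_m$.
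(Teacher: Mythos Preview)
Your steps (ii) and (iii) match the paper's Step~2 essentially verbatim: the paper also passes to the open sets $U_k = \{x : \varphi_{\vec{t}}(x) \in \mathrm{int}\,T \text{ for some } |\vec{t}| = k\}$, uses compactness of $I$ to extract a finite cover, and then uses strict monotonicity together with positive invariance of $T$ to obtain $\varphi_i(\mathrm{int}\,T) \subset \mathrm{int}\,T$, which gives the nesting $I \cap U_k \subset I \cap U_{k+1}$ and hence the padding to a common length $\ell_0$.

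The real difference is in step~(i). Your argument is dynamical: move $x$ left toward $l \in \partial L$, and if $l$ is not an $l_m$, switch to a rightward excursion, iterating and invoking finiteness of the critical-point partition for termination. This is correct in principle, but the termination argument is left vague, and the ``anti-$T$'' worry you flag is exactly where the work hides. The paper's Step~1 is cleaner and sidesteps the alternation entirely: it argues purely at the set level that for every $x \in I$ there is a $T_m$ with either $(l_m, x] \subset L$ or $[x, r_m) \subset R$, after which a \emph{single} application of \cref{Prop:LeftRight} lands you in $\mathrm{int}\,T_m$. The key observation is that at most one ``switch'' is ever needed: if the left endpoint $\beta_1$ of the maximal $L$-interval containing $x$ is not some $l_m$, then the right endpoint $\beta_3$ of the maximal $R$-interval containing $\beta_1$ is \emph{always} some $r_m$ (because $\beta_3 \in L$, its maximal $L$-interval $(\beta_4,\beta_3]$ has $\beta_4 \geq \beta_2 > \beta_1$, forcing $(\beta_4,\beta_3) \subset L \cap R$ with the correct boundary types). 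Moreover $[x,\beta_3) \subset R$, so one rightward push from $x$ itself suffices---no need to first drift left toward $\beta_1$. This structural fact is what your alternation argument would eventually discover, but proving it up front removes both the recursion and the termination concern.
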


\begin{proof}
Step 1: We first show that for every $x \in I$ there exists a path $\vec{t}_x$, whose length may depend on $x$, such that
\begin{equation}\label{Pathx}
    \varphi_{\, \vec{t}_x \, }(x) \in \inter T \, .
\end{equation}
 
To prove \eqref{Pathx} we show for each $x \in I$, there exists a $T_\Tindex = [l_\Tindex, r_\Tindex]$ satisfying \cref{Def:SetsTj} such that either
\begin{align}\label{Eq:SetIdenity}
    [x, r_\Tindex) \subset R \qquad \textrm{or} \qquad (l_\Tindex, x] \subset L \, .
\end{align}
Condition \eqref{Eq:SetIdenity} combined with \eqref{Eq:LiminfL}--\eqref{Eq:LimsupR} from \cref{Prop:LeftRight} imply \eqref{Pathx}.

The remaining proof of \eqref{Eq:SetIdenity} for Step 1 is visualized in \cref{Fig:ProofPath1d}.  Assume without loss of generality that $x \in (\dvar_1, \dvar_2) \subset L$ where $\dvar_1$ and $\dvar_2$ defines the largest open sub-interval in $L$ containing $x$ (i.e., both $\dvar_1, \dvar_2$ lie on $\partial L$). If $x \notin L$ an identical assumption may be made regarding $x \in R$. 

If $\dvar_1 = l_\Tindex$ for some $\Tindex$, then we are done (see \cref{Fig:ProofPath1d}(a)). 

If $\dvar_1 \neq l_\Tindex$ (for every $\Tindex$), then $\dvar_1 \in R$ by \eqref{Eq:EndpointProp} (see \cref{Fig:ProofPath1d}(b)). Let $(\dvar_1, \dvar_3) \subset R$ be the largest sub-interval of $R$ with $\dvar_3 \in \partial R$. Then $\dvar_2 < \dvar_3$, otherwise $[\dvar_1, \dvar_3]$ would define a $T_\Tindex$ and \eqref{Eq:SetIdenity} would hold.  

By construction, the interval $[x, \dvar_3) \subset R$. We now claim that $\dvar_3 = r_\Tindex$ for some $\Tindex$, in which case we are done. The reason is that since $R \cup L$ covers $I$ (e.g., \eqref{Eq:SetCover}) with $\dvar_3 \in \partial R$, so there must be an interval of the form $(\dvar_4, \dvar_3] \subset L$ with $\dvar_4 \in \partial L$. Lastly, $[\dvar_4, \dvar_3]$ satisfies \cref{Def:SetsTj}, since $\beta_3 >\beta_2$ and $\beta_2 \notin  L$ implies $\beta_4 \geq \beta_2>\beta_1$, which implies that $(\beta_4,\beta_3) \subset R$. 

\begin{figure}
      \centering
       \includegraphics[width=0.95\textwidth]{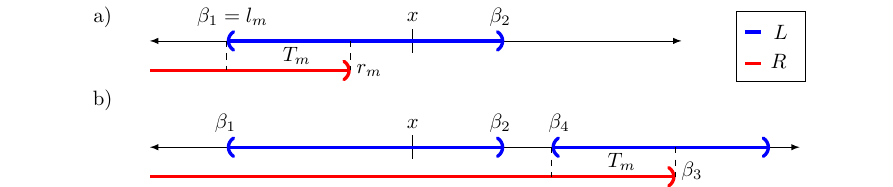}
       \caption{Visualization of two sub-cases for the proof in Step 1 of \cref{Lem:1d_Pathlength}.}
        \label{Fig:ProofPath1d}       
\end{figure}

\bigskip 

Step 2: To complete the proof of the Lemma, it is sufficient to show that there exists an $\ell = \ell(\eta)$ for which $I \subset U_{\ell}$ where 
\begin{equation}\label{Def:U_k}    
    U_k := 
    \Big \{ x \in \mathbb{R} \; : \; 
    \varphi_{\, \vec{t} \, }(x) \in \inter T   
    \textrm{ and } 
    |\, \vec{t} \, | = k \Big \} \, ,
\end{equation}
is the set of points that map into the interior of $T$ after $k$ steps.  Let 
\begin{equation*}
    U= \bigcup_{k=0}^\infty U_k \, , 
\end{equation*}
be the set of all points that can reach the interior of $T$ (with an arbitrary path length). Note that $U_k$ is equivalently  
\begin{equation*}
    U_k = \left \{ \varphi_{\, \vec{t} \, }^{-1}( \inter T) \;\; : \;\; |\, \vec{t} \,| = k \right\} \, .
\end{equation*}
Since $\varphi_j$ is continuous and $\inter T$ is open, $U_k$ and hence $U$ is open. By \eqref{Pathx}, the collection $\{U_k \; : \; k \in \mathbb{Z} \}$ is an open cover of $I$ and by compactness has a finite sub-cover, i.e., 
\begin{equation*}
    I \subset \bigcup_{k=0}^{\ell} U_k \, ,    
\end{equation*}
for some $\ell$. Since $\varphi_\findex$ is (strictly) monotone on $I$ when $\eta < 1/\LipK$, and $T$ is positive invariant, we have that each $\varphi_\findex$ maps open subsets of $T$ into open subsets of $T$, whence $\varphi_\findex( \inter T) \subset \inter T$ for every $1 \leq \findex \leq n$. Consequently, the sets $I \cap U_k \subset I \cap U_{k+1}$ are nested, and hence $I \subset U_\ell$. 
\end{proof}

\section{Proof of the Main Result}\label{Sec:ProofsMainResult}
Building on the one dimensional results, we provide the proofs for each part of the main result.

\subsection{Main Result Proof of Part \ref{Main_a}} \label{Subsec:ProofMainResult_Parta}

The proof of \cref{Main_thm_basin}\ref{Main_a} makes use of the following lemma --- which is a general statement regarding uniformly transient sets for Markov operators arising from iterated function systems. The lemma makes no assumptions on the regularity or monotonicity of the maps $\varphi_i$. 
\begin{lemma}[Uniformly transient sets for iterated function systems]\label{Lem:Mass_onT}
    Consider the dynamics \eqref{Eq:SGD_Maps} for a family of maps $\varphi_i : I \rightarrow I$ ($1 \leq i \leq n$) where $I \subset \mathbb{R}^{\dimR}$ is non-empty and $\mathcal{P}$ is defined in \eqref{Eq:SGD_Markov_Operators}.  Suppose the set $T \subset I$ ($B := I \setminus T)$ is positive invariant and for each $x \in B$, there exists a $\varphi_i(x) \in T$. 
    
    Then for any finite measure $\mu \in \mathscr{M}(I)$ 
    \begin{align*}
        (\mathcal{P} \mu)(B) \leq \left( 1 - \frac{1}{n} \right) \mu(B) \, .
    \end{align*}
\end{lemma}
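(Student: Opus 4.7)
The plan is to unpack the definition of $(\mathcal{P}\mu)(B)$ in terms of indicator functions and then exploit the two hypotheses (positive invariance of $T$, and the single-step reachability of $T$ from every point of $B$) as \emph{pointwise} bounds on the integrand. No geometry or monotonicity is needed, only a careful book-keeping of the sum $\sum_{i=1}^n \chi_B(\varphi_i(x))$.

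\textbf{Step 1 (rewrite as an integral).} Starting from \eqref{Eq:SGD_Markov_Operators}, I would write
\begin{equation*}
    (\mathcal{P}\mu)(B) \;=\; \frac{1}{n}\sum_{i=1}^n \mu\big(\varphi_i^{-1}(B)\big) \;=\; \frac{1}{n}\int_{I}\sum_{i=1}^n \chi_B\big(\varphi_i(x)\big)\,d\mu(x),
\end{equation*}
using Fubini/Tonelli (the integrand is a finite sum of non-negative measurable functions).

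\textbf{Step 2 (kill the integrand on $T$).} Positive invariance $\varphi_i(T)\subset T$ means $\chi_B(\varphi_i(x))=0$ for every $x\in T$ and every $i$. Consequently the integral collapses to an integral over $B$:
\begin{equation*}
    (\mathcal{P}\mu)(B) \;=\; \frac{1}{n}\int_{B}\sum_{i=1}^n \chi_B\big(\varphi_i(x)\big)\,d\mu(x).
\end{equation*}

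\textbf{Step 3 (bound the integrand on $B$).} The reachability hypothesis says that for each $x\in B$ there exists at least one index $i(x)\in[n]$ with $\varphi_{i(x)}(x)\in T$, so $\chi_B(\varphi_{i(x)}(x))=0$. Hence at most $n-1$ of the indicators can equal one, giving the pointwise bound
\begin{equation*}
    \sum_{i=1}^n \chi_B\big(\varphi_i(x)\big) \;\leq\; n-1 \qquad\text{for all } x\in B.
\end{equation*}
Inserting this into the integral yields $(\mathcal{P}\mu)(B) \leq \tfrac{n-1}{n}\mu(B) = (1-1/n)\mu(B)$, which is the claim.

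\textbf{Expected obstacles.} There is no substantive obstacle: the result is essentially a counting argument once the integrand is written out. The only minor care point is measurability of the sets $\varphi_i^{-1}(B)$ and of $B=I\setminus T$, which is implicit in the hypothesis that $\mathcal{P}$ acts on measures via \eqref{Eq:SGD_Markov_Operators} (this in turn requires each $\varphi_i$ to be Borel measurable and $T,B$ to be Borel sets); since these are standing assumptions in the paper's setting, nothing further needs to be verified.
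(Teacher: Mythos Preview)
Your proof is correct and is essentially the same counting argument as the paper's: the paper phrases it via the auxiliary sets $B_i := \{x\in B : \varphi_i(x)\in T\}$, writes $\varphi_i^{-1}(B)\cap I = B\setminus B_i$ and $\bigcup_i B_i = B$, and then uses $\sum_i \mu(B_i) \geq \mu(\cup_i B_i)$, which is exactly your pointwise bound $\sum_i \chi_B(\varphi_i(x)) \leq n-1$ on $B$ expressed in set language. Your indicator-function formulation is slightly more streamlined but mathematically identical.
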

The intuition behind \cref{Lem:Mass_onT} is that a fraction of the mass $\mu(B)$ travels into $T$ every iteration.  The proof is provided here for completion. 

\begin{proof}
        For each $i = 1, \ldots, n$ let $B_i := \{ x\in B \; : \; \varphi_i(x) \in T\}$, i.e., $B_i \subset B$ with $\varphi_i(B_i) \subset T$.  The sets $B_i$ then satisfy the following identities:
        \begin{align}\label{Propt_A}
            \varphi_i^{-1}(B) \cap I \, &=\, B \setminus B_i \, ,
        \end{align}
        holds since $T$ and $I$ are positive invariant, and
        \begin{align}\label{Propt_B}
            \bigcup_{i = 1}^n  B_i  &= B \, ,
        \end{align}
        holds since for each $x \in B$ one of the maps $\varphi_i(x) \in T$. 

        By direct calculation, since $\mu$ is supported on $I$ we then have
        \begin{alignat*}{2}\label{PM_mu_T}
            (\mathcal{P} \mu)(B) = 
        \frac{1}{n} \sum_{i=1}^n \mu\left( \varphi_{\, i \, }^{-1}(B) \cap I \right) 
        &= \mu(B) - \frac{1}{n} \sum_{i=1}^n \mu(B_i) \, , & \quad &(\textrm{Via} \; \eqref{Propt_A}) \, , \\
        &\leq \mu(B) - \frac{1}{n} \mu\left( \cup_{i=1}^n B_i\right) \, . &  & 
\end{alignat*}
    Substituting \eqref{Propt_B} into the last line yields the desired result.
\end{proof}

\begin{proof}[Proof of Main Result \cref{Main_thm_basin}(a)] 
\bigskip 

\noindent \ref{Main_a_i} For each $1 \leq j \leq d$ the family of functions $\{f_{i}^{(j)} \}_{i=1}^n$ satisfies \ref{A1}--\ref{A5}.  \cref{Prop:FiniteTs}(a) implies for each $j$, $\M_j \geq 1$ so that $\numT \geq 1$ and $T$ is non-empty.  Secondly, \cref{Prop:FiniteTs}(c) implies that for each $\Tindex = 1, \ldots, \M_j$ at least one local minima of 
\begin{align*}
    \wmac^{(j)}(x_j) := \sum_{\findex=1}^{n} f_i^{(j)}(x_j) \, ,
\end{align*}
is contained in each $T_\Tindex^{(j)}$. Since we can write $F(x) = \sum_{j=1}^{\dimR} \wmac^{(j)}(x_j)$ and $T_{\Tindexd}$ is a rectangle of the form \eqref{Eq:DefT_Rd}, it follows that each $T_{\Tindexd}$ ($\Tindexd \in \mathcal{\M}$) contains a local minima of $F$. 

Thirdly, \cref{Prop:1d_Tj_PosInv} implies that $T_\Tindex^{(j)}$ ($1 \leq \Tindex \leq \M_j$) is positive invariant with respect to the maps $\{\varphi_i^{(j)} \}_{i=1}^n$. Combining this fact with the form \eqref{Eq:SepVarphi} of $\varphi_i$ for separable functions implies that both $I$ and $T_{\Tindexd}$ ($\Tindexd \in \mathcal{\M}$) are positive invariant with respect to the $\dimR$ dimensional maps $\{\varphi_i\}_{i=1}^n$.

Additional positive invariant sets can also be constructed via intersections and unions, for instance, rectangles of the form $I^{(1)} \times \ldots \times I^{(j-1)} \times T^{(j)} \times I^{(j+1)} \times \ldots \times I^{(\dimR)}$ are also positive invariant.  This will be used in the proof of Part~(ii). 

\bigskip
\noindent \ref{Main_a_ii} Apply \cref{Lem:1d_Pathlength} separately to each dimension. Then there exists an $\ell^{(j)}$ such that for every $x_j \in I^{(j)}$ there exists a path $\vec{t}_j$ (depending on $x_j$) of length $\ell^{(j)}$ satisfying  $\varphi_{\vec{t}_j}^{(j)}(x_j) \in   T^{(j)}$.  Now let 
   \begin{equation}\label{Def:Ell}
       \ell_0 =  \sum_{j=1}^d \ell^{(j)} \, . 
   \end{equation}
   Then for every $x = (x_1, \, x_2, \, \ldots, \, x_{\dimR}) \in I$ define the composition path $\vec{p} = \vec{p}_1 \circ \vec{p}_2 \circ \cdots \circ \vec{p}_\dimR$ as follows: Set
   $\vec{p}_{\dimR}$ (of length $\ell^{(\dimR)}$) so that
  { \begin{align}\label{Eq:Mapping1}
       \varphi_{\vec{p}_{\dimR}}(x) \subset I^{(1)} \times \ldots \times I^{(\dimR-1)} \times \  T^{(\dimR)} \, . 
   \end{align}}
   Pick $\vec{p}_{\dimR-1}$ with length $\ell^{(\dimR - 1)}$ to map the $\dimR -1$ component of $\varphi_{\vec{p}_{\dimR}}(x)$ into $    T^{(\dimR - 1)}$. The positive invariance of the set in the right hand side of \eqref{Eq:Mapping1} yields:
    \begin{align*}
       \varphi_{\vec{p}_{\dimR-1}\,} \circ \varphi_{\vec{p}_{\dimR}}(x) \subset I^{(1)} \times \ldots \times 
       I^{(\dimR-2)} \times  T^{(\dimR-1)} \times T^{(\dimR)} \, . 
   \end{align*}
    Continuing to define $\varphi_{\vec{p}_j}$ with length $\ell^{(j)}$ to map the $j$th component of $\varphi_{\vec{p}_{j+1}} \circ \cdots \circ \varphi_{\vec{p}_{\dimR}}(x)$ into $   T^{(j)}$, one obtains
    \begin{equation*}
        \varphi_{\vec{p}}(x) \subset   T^{(1)} \times  T^{(2)} \times \cdots \times   T^{(\dimR)} =   T  \, .
    \end{equation*}    
    
\bigskip
\noindent \ref{Main_a_iii} Take $\ell_0$ as defined in \eqref{Def:Ell} and apply \cref{Lem:Mass_onT} to the family of $n^{\ell_0}$ maps $\{ \varphi_{\vec{j}}  :  |\vec{j}| = \ell_0 \}$.   
\end{proof}
\subsection{Main Result Proof of Part \ref{Main_b}}

    \cref{Main_thm_basin}\ref{Main_b} follows directly from \cref{Bhat_Lee}, where the key technical statement to prove is that the SGD dynamics satisfy the splitting condition \eqref{Eq:BL_PathCond}.  
    
    To prove \eqref{Eq:BL_PathCond}, we proceed by induction on the dimension.  In particular, the idea is to consider the restriction of the SGD dynamics (for separable functions) into the first $j$ coordinates. We then show, that if \eqref{Eq:BL_PathCond} holds for the first $j$ coordinates, then \eqref{Eq:BL_PathCond} holds for the first $j+1$ coordinates as well.  
    
    There is a subtlety in the proof in that the argument relies on the dynamics \eqref{Eq:SGDIterates} to construct $\alpha$ defining the orthant $\mathbb{R}_{\dtuple}^{d}$.  The following example demonstrates that \eqref{Eq:BL_PathCond} does not necessarily hold for every choice of $\alpha$, but rather does hold for at least one $\alpha$.    
    \begin{example}
        Set $\dimR = 2$, $f_1(x_1,x_2) = (x_1)^2 + (x_2-1)^2$ and $f_2(x_1, x_2) = (x_1-1)^2 + (x_2)^2$ so that
        \begin{align*}
            f_1^{(1)}(x_1) = x_1^2 \qquad f_1^{(2)}(x_2) = (x_2 - 1)^2
            \qquad 
            f_2^{(1)}(x_1) = (x_1-1)^2 \qquad f_2^{(2)}(x_2) = (x_2)^2 \, . 
        \end{align*}
        In this example, there is only a single set $T = I = [0,1]^2$ as defined in \eqref{Eq:DefT_Rd}. Provided $\eta < 1/\LipK$, the following subsets of $T$ are positive invariant: (i) the line segment $y = 1-x$; (ii) the region $y > 1-x$; and the region (iii) $y < 1-x$.  As a result, the condition \eqref{Eq:BL_PathCond} with respect to $\alpha = (+1, +1)$ is never satisfied (for any pair of paths of any length) since mappings of $(0,0)$ and $(1,1)$ will stay separated by the line segment $y = 1-x$; no pair of maps will flip the ordering of $(0,0)$ and $(1,1)$.         
        
        The condition \eqref{Eq:BL_PathCond} is however readily verified for $\alpha = (+1, -1)$. Despite the fact that $T$ has three positive invariant sets, the main result implies any initial measure $\mu_0 \in \mathscr{P}(I)$ converges to a unique invariant measure (which in this case is supported on the line segment $y=1-x$ ($0\leq x \leq 1$).
    \end{example}
   
    \begin{proof}[Proof of Main Result \cref{Main_thm_basin}(b)] It is sufficient to show that $\{\varphi_i\}_{i=1}^n$ and the set $T_{\Tindexd}$ satisfy the hypothesis in \cref{Bhat_Lee} (or equivalently \cref{Thm:DB_Path} when $d = 1$).
    
    Under the restriction $\eta < 1/\LipK$, every map $\varphi_i$ is monotone on $I$ with respect to every orthont $\mathbb{R}_{\dtuple}^{\dimR}$.  This is because each component $\varphi_i^{(j)}$ of $\varphi_i$ (see \eqref{Eq:SepVarphi}) is an increasing function on $I^{(j)}$ via \cref{Prop:LeftRight}.
    
    It remains to show $\{\varphi_i\}_{i=1}^n$ satisfy \eqref{Eq:BL_PathCond}.  For notational simplicity, we assume without loss of generality (e.g., after translation and scaling of $X_k$), that $T_{\Tindexd}$ has the form
    \begin{equation*}
        T_{\Tindexd}= [0,1]^\dimR \,.
    \end{equation*}
    We prove \eqref{Eq:BL_PathCond} by induction on the dimension $\dimR$.  
    
    Suppose $\dimR = 1$. By the definition of $T_{\Tindex}$ and \cref{Prop:FiniteTs}(b) we have $[0, 1) \subset R$ and $(0, 1] \subset L$. Thus, \eqref{Eq:LiminfL}--\eqref{Eq:LimsupR} in \cref{Prop:LeftRight} imply there exists paths $\vec{t}_1$ and $\vec{t}_2$ (which can be chosen to be the same length) that map $\varphi_{\vec{t}_1\,}(0) > x$ and $\varphi_{\vec{t}_2 \,}(1) < x$ where $ x$ is any point in the interior of $T_\Tindex = [0,1]$. Thus,  \eqref{Eq:BL_PathCond} holds with $\dtuple = +1$ and $x_0 = \frac{1}{2}$.
    
    Assume that \eqref{Eq:BL_PathCond} holds in $\mathbb{R}^{\dimR-1}$. That is, for the family of maps $\{\hat{\varphi}_i\}_{i=1}^{n}$ given as
    \begin{equation*}
        \hat{\varphi}_i(x_1, \, x_2, \, \ldots, \, x_{\dimR-1}) = 
        \begin{pmatrix}  \varphi_i^{(1)}(x_1), &  \varphi_i^{(2)}(x_2), & \hdots, & \varphi_i^{(\dimR-1)} (x_{\dimR-1})
        \end{pmatrix} \, ,
    \end{equation*} 
    there exists an $\hat{\dtuple} \in \{+1, -1\}^{\dimR - 1}$, $\hat{x}_0 \in \mathbb{R}^{\dimR - 1}$ and two paths $\vec{p}_1$ and $\vec{p}_2$ (each having the same length $\widehat{\ell}$) satisfying 
    \begin{equation}\label{Path_cond_Rd1}
        \hat{\varphi}_{\vec{p}_1} \big([0,1]^{\dimR-1} \big) \, \preceq_{\hat{\dtuple} } \hat{x}_0 
        \qquad  \textrm{and} \qquad  
        \hat{x}_0 \preceq_{\hat{\dtuple}} \hat{\varphi}_{\vec{p}_2}\big([0,1]^{\dimR-1} \big) \, .
    \end{equation}

    Next consider the case of dimension $\dimR$. For each map $\varphi_i$ write
    \begin{align*}
        \varphi_i(x) = \begin{pmatrix}
            \hat{\varphi}_i, & \varphi_i^{(\dimR)}(x_{\dimR})
        \end{pmatrix} \, , 
    \end{align*}
    where $\hat{\varphi}_i$ is the restriction of $\varphi_i$ to the first $\dimR-1$ coordinates. By hypothesis 
    $\{ \hat{\varphi_i}\}_{i=1}^{n}$ satisfies \eqref{Path_cond_Rd1} for some $\hat{\dtuple}$, $\hat{x}_0$ and paths $\vec{p}_1$ and $\vec{p}_2$.  We now construct $\dtuple \in \{+1, -1\}^{\dimR}$, $x_0 \in \mathbb{R}^{\dimR}$, and two paths for which \eqref{Eq:BL_PathCond} holds.

    Turning attention to the $\dimR$th coordinate, either 
    \begin{align*}
        \varphi_{\vec{p}_2}^{(\dimR)} (0) \geq \varphi_{\vec{p}_1}^{(\dimR)} (1) \qquad 
        \textrm{or} \qquad 
        \varphi_{\vec{p}_2}^{(\dimR)} (0) < \varphi_{\vec{p}_1}^{(\dimR)} (1) \, . 
    \end{align*}
    
     If $\varphi_{\vec{p}_2}^{(\dimR)} (0) \geq \varphi_{\vec{p}_1}^{(\dimR)} (1)$, then the $d$th coordinate satisfies the splitting condition in one dimension     
     \begin{align*}
         \varphi_{\vec{p}_1}^{(\dimR)}\big( [0,1] \big) \leq \Midpt \leq 
         \varphi_{\vec{p}_2}^{(\dimR)} \big( [0, 1] \big) \, , 
     \end{align*}
     where
     \begin{align}
        \Midpt := \frac{1}{2} \left( \varphi_{\vec{p}_2}^{(\dimR)}(0)  + \varphi_{\vec{p}_1}^{(\dimR)}(1) \right) \, .
     \end{align}
     Then \eqref{Eq:BL_PathCond} holds in dimension $\dimR$ for $\{\varphi_i\}_{i=1}^n$ with paths $\vec{p}_1$, $\vec{p}_2$, orthont $\dtuple = (\hat{\dtuple}, \, +1)$ and
     \begin{align}\label{Def:x0}
        x_0 := \begin{pmatrix}\hat{x}_0 \, , & \Midpt
        \end{pmatrix} \in \mathbb{R}^{\dimR} \, .
      \end{align}
    
    If $\varphi_{\vec{p}_2}^{(\dimR)} (0) < \varphi_{\vec{p}_1}^{(\dimR)} (1)$, then we construct two new paths (via concatenation) for which \eqref{Eq:BL_PathCond} holds.  Let $\LipK_0 :=  (1+\eta \LipK )^{\widehat{\ell}}$ and set
    \begin{equation*}
       \varepsilon :=\frac{1}{2 \LipK_0}\left( \varphi_{\vec{p}_1}^{(\dimR)} (1) -\varphi_{\vec{p}_2}^{(\dimR)} (0) \right) > 0 \, .
    \end{equation*}
    The choice of $\LipK_0$ with Assumption \ref{A4} then yields the following Lipschitz condition
    \begin{align}\label{Lip_path_Rd}
        \left| \varphi_{\vec{p}_j}^{(d)}(x) - \varphi_{\vec{p}_j}^{(d)}(y) \right| \leq \LipK_0 |x - y| \, , 
        \qquad j = 1,2 \quad \textrm{and} \quad x,y \in [0,1] \, . 
    \end{align}    

    Next, from \cref{Prop:LeftRight} there exist two paths $\vec{q}_1$ and $\vec{q}_2$ for which
   \begin{equation}
   \label{Small_xd_range1}
         \varphi_{\vec{q}_1}^{(d)}\big([0,1] \big) \subset   [1-\varepsilon,1]  \, ,
   \end{equation}
    and
    \begin{equation}\label{Small_xd_range2}
        \varphi_{\vec{q}_2}^{(d)}\big([0,1]\big) \subset [0,\varepsilon] \,.
    \end{equation}
    Applying $\varphi_{\vec{p}_1}^{(d)}$ to \eqref{Small_xd_range1} yields
    \begin{align}\label{Eq:SetInc1}
         \varphi_{\vec{p}_1 \circ \vec{q}_1}^{(d)}\big([0,1]\big) &\subset\varphi_{\vec{p}_1}^{(d)} \big( [1-\varepsilon, \, 1 ] \big) \, .  
    \end{align}    
    Using the fact that $\varphi_{\vec{p}_1}^{(d)}$ is increasing, together with \eqref{Lip_path_Rd} implies
    \begin{align}\label{Eq:SetInc2}
        \varphi_{\vec{p}_1}^{(d)}(1) - \varphi_{\vec{p}_1}^{(d)}(1- \varepsilon) 
        \leq K_0 \varepsilon \, . 
    \end{align}
    Substituting the definition of $\varepsilon$ into \eqref{Eq:SetInc2} yields $\varphi_{\vec{p}_1}^{(d)}(1-\varepsilon) \geq \Midpt$. Together with $\varphi_{\vec{p}_1}^{(d)}(1) \leq 1$, \eqref{Eq:SetInc1} implies
    \begin{align*}
        \varphi_{\vec{p}_1 \circ \vec{q}_1}^{(d)}\big([0,1]\big) 
         &\subset\left[ \Midpt , \, 1 \right] \, . 
    \end{align*}
    
    By a similar argument, applying the $\varphi_{\vec{p}_2}^{(d)}$ and \eqref{Lip_path_Rd} to \eqref{Small_xd_range2} yields
    \begin{align*}
         \varphi_{\vec{p}_2 \circ \vec{q}_2}^{(d)}\big([0,1]\big) \subset 
         \left[ 0, \Midpt \right] \, .
    \end{align*}
    Hence, the $d$th coordinate satisfies the splitting condition 
    \begin{align*}
        {\varphi}_{\vec{p}_2 \circ \vec{q}_2}^{(d)}\big([0,1] \big) \leq \Midpt \leq {\varphi}_{\vec{p}_1 \circ \vec{q}_1}^{(d)}\big( [0,1] \big) \, .
    \end{align*}
    Subsequently, the maps $\{\varphi_i\}_{i=1}^n$ satisfy condition \eqref{Eq:BL_PathCond} in dimension $\dimR$ with paths $\vec{p}_1 \circ \vec{q}_1$ and $\vec{p}_2 \circ \vec{q}_2$, orthont $\dtuple = ( \hat{\dtuple}, \, -1)$ and point $x_0$ defined in \eqref{Def:x0}. 
    \end{proof}

\subsection{Main Result Proof of Part \ref{Main_result_d}}
The proof of part \ref{Main_result_d} utilizes the metric $\tilde{d}$ defined in \eqref{d_big_def} which satisfies the following: for any finite non-negative Borel measures $\mu_1, \mu_2, \nu_1, \nu_2$ supported on $I$, 
\begin{align}
    \label{d_a_tri}
     \tilde{d}(\mu_1+ \mu_2,\nu_1 + \nu_2) &\leq \tilde{d}(\mu_1,\nu_1) +\tilde{d}(\mu_2,\nu_2) \, ,  
 \\
\label{d_a_split}
     \tilde{d}(\mu_1+ \mu_2,\nu_1 + \nu_2) &\leq \nu_1(I) + \mu_1(I) + \tilde{d}(\mu_2,\nu_2).
\end{align}
Here \eqref{d_a_tri} and \eqref{d_a_split} follow from \eqref{dF_split11} and \eqref{d_F_tri} together with the fact that for $ \Tindexd \in \mathcal{\M}$, $T_{\Tindexd} \subset I$  and $B \subset I$ are pairwise disjoint.

\begin{proof}[Proof of Main Result \cref{Main_thm_basin}\ref{Main_result_d}]  {We first establish the properties of $\{ g_{\Tindexd}\}_{\Tindexd \in \mathcal{M}}$ and the integral representation of $c_{\Tindexd}$.}

Let $\mu_0 \in \mathscr{P}(I)$. Then for each $\Tindexd \in \mathcal{\M}$, $\mu_{k}( T_\Tindexd)$ is a bounded increasing sequence since $T_\Tindexd$ is positive invariant; see \cref{Main_thm_basin}\ref{Main_a}. {The sequence converges to a limit, call it
    \begin{equation}
     \label{c_mu_def}
          c_{\Tindexd}(\mu) := \lim_{k \rightarrow \infty} (\mathcal{P}^k \mu) (T_{\Tindexd}) \,.
     \end{equation} 
     Using this definition introduce the functions $\{g_{\Tindexd}\}_{\Tindexd \in \mathcal{M}}$ as
     \begin{align*}
         g_{\Tindexd}(x) := c_{\Tindexd}(\delta_x) \, \qquad ( x \in I) \,.
     \end{align*}}
    {From \eqref{c_mu_def}, the coefficients $c_{\Tindexd } \in [0,1]$. Furthermore, they satisfy $\sum_{\Tindexd \in \mathcal{\M} } c_{\Tindexd } = 1$ since $(\mathcal{P}^k \mu)(B) \rightarrow 0$ as $k \rightarrow \infty$ (see \cref{Main_thm_basin}\ref{Main_a}\ref{Main_a_iii}). Hence, the functions $g_{\Tindexd}$ are non-negative, $g_{\Tindexd}(x) = 1$ for $x \in T_{\Tindexd}$ and for each $x \in I$ satisfy $\sum_{\Tindexd \in \mathcal{M}} g_{\Tindexd}(x) = 1$. 

     The remaining properties of $g_{\Tindexd}$ will be shown with the use of an auxiliary function.  Fix $\Tindexd \in \mathcal{M}$ and let $\psi_0 : I \rightarrow [0,1]$ be any continuous function such that $\psi_0 \equiv 1$ on $T_{\Tindexd}$ and $\psi_0 \equiv 0$ on $T_{\Tindexd'}$ for all $\Tindexd' \neq \Tindexd$. Such a function exists since the $T_{\Tindexd}$ are disjoint rectangles.  It is helpful to view $\psi_0$ as a continuous version of a characteristic function for $T_{\Tindexd}$.    
     
     The properties of $\psi_0$, together with \cref{Main_thm_basin}\ref{Main_a}\ref{Main_a_iii}, imply
     \begin{align}\label{c_mu_rep1}
          c_{\Tindexd}(\mu) &= \lim_{k \rightarrow \infty} \int_I \psi_0(x) \, d{\mathcal{P}}^k \mu(x)  \\ \label{c_mu_rep2}
          &= \lim_{k \rightarrow \infty} \int_I \psi_k(x) \, d\mu(x)  \, .
     \end{align}
      Here \eqref{c_mu_rep2} follows from \eqref{c_mu_rep1} by introducing $\psi_k := {\mathcal{P}^\star}^k \psi_0$ $(k \geq 0)$ defined as in \eqref{Def:Pstar} and applying the adjoint property \eqref{Eq:AdjointProperty}.  Each $\psi_k$ is continuous since $\mathcal{P}^{\star}$ maps continuous functions on $I$ to continuous functions on $I$.  Note also that each $\psi_k$ may be written as
    \begin{align}\label{Eq:DualityOfPsi}        
        \psi_k(x) = \langle {\mathcal{P}^\star}^k \psi_0, \delta_x\rangle = \langle \psi_0, \mathcal{P}^k \delta_x\rangle \,.
    \end{align}       
    We now claim that $\psi_k$ converges uniformly to $g_{\Tindexd}$ on $I$: For any $x \in I$ 
      \begin{alignat*}{2}
        \left |g_\Tindexd(x) - \psi_k(x) \right| &= \left |c_{\Tindexd}(\delta_x) - \int_I \psi_0(y)  \, d\mathcal{P}^k \delta_x(y)  \right| && \qquad
        (\textrm{Using \, \eqref{Eq:DualityOfPsi}}) \\
        & \leq \left |c_{\Tindexd}(\delta_x) -  (\mathcal{P}^k \delta_x)(T_\Tindexd)  \right| + (\mathcal{P}^k \delta_x)(B) &&\qquad
        (\textrm{By definition of } \psi_0) \\
        & \leq 2  \left( 1 - \frac{1}{n^{\ell_0}} \right)^{\lfloor \frac{k}{\ell_0} \rfloor} \, &&\qquad (\textrm{From \cref{Main_thm_basin} \ref{Main_a}\ref{Main_a_iii}}). 
     \end{alignat*}     
     Thus, $g_\Tindexd$ is continuous since it is the uniform limit of continuous functions and satisfies \newline $\int_{I} g_{\Tindexd} \, d\mu_{\Tindexd'}^{\star} = \delta_{\Tindexd\Tindexd'}$.  Applying the dominated convergence theorem to \eqref{c_mu_rep2} yields
     \begin{equation*}
           c_{\Tindexd}(\mu) 
           =\int_I  g_\Tindexd (x) \, d\mu(x) \, .
     \end{equation*}
     Next, $g_{\Tindexd}$ is an eigenfunction satisfying $\mathcal{P}^{\star} g_{\Tindexd} = g_{\Tindexd}$.  This follows since \eqref{Def:Pstar} ensures $\|\mathcal{P}^{\star} f\|_{\infty} \leq \|f\|_{\infty}$ for any continuous function $f$ on $I$, so that one has
     \begin{alignat*}{2}
         \|\mathcal{P}^{\star} g_{\Tindexd} - g_{\Tindexd} \|_{\infty} &\leq 
         \|\mathcal{P}^{\star} g_{\Tindexd} - \mathcal{P}^{\star}\psi_k  \|_{\infty} + \|\psi_{k+1} - g_{\Tindexd}\|_{\infty}  && \\
         &\leq 
         \|g_{\Tindexd} - \psi_k  \|_{\infty} + \|\psi_{k+1} - g_{\Tindexd}\|_{\infty}  \\
         &\rightarrow 0  \qquad 
          \textrm{as } \qquad k \rightarrow \infty.&& 
     \end{alignat*}     
     We have shown that $g_{\Tindexd}$ satisfies the properties \cref{Main_thm_basin}\ref{Main_result_d}\ref{Main_d_i}--\ref{Main_d_iii}. It is also the unique function satisfying these properties. For if $h_{\Tindexd}$ were a second such function, set $\psi_0 = h_{\Tindexd}$ above. But then $\psi_k = h_{\Tindexd}$ for all $k$ since $h_{\Tindexd}$ is an eigenvector of $\mathcal{P}^{\star}$ and so $h_{\Tindexd} = g_{\Tindexd}$. 
     }
   
    Next introduce the probability measure
    \begin{equation*}
         \mu^\star := \sum_{\Tindexd  \in \mathcal{\M}}  c_{\Tindexd } \mu_{\Tindexd }^\star \, 
    \end{equation*}
    {where, for brevity, we suppress the dependence on $\mu$, i.e., $c_{\Tindexd} = c_{\Tindexd}(\mu)$}.  Note that $\mu^\star$ is the convex combination of invariant measures and hence is invariant.

       We now show the result
    \begin{equation}\label{Eq:MainEstimate} 
        \Tilde{d} \left ( {\mathcal{P}}^{2 \ell k} \mu_0, \mu^\star \right) \leq 3 \gamma^k \, .
    \end{equation}
    where
    \begin{align*}        
        \ell := \ell_0 \vee \max_{ \gamma_{\Tindexd}  \in \mathcal{\M}} \{ \ell_{\Tindexd} \} \qquad 
        \textrm{and} \qquad
        \gamma := 1 - \frac{1}{n^{\ell}} \, ,
    \end{align*}
    with $\plen_0$ and $\ell_{\Tindexd}$ defined in \cref{Main_thm_basin}\ref{Main_a}-\ref{Main_b}.
    
    For notational brevity, also introduce
    \begin{align*}        
        \tilde{\mu}_{k} := \mu_{ \ell k} \qquad \textrm{and} \qquad 
        \mathcal{Q} := {\mathcal{P}}^{\ell}  \, .
    \end{align*}
    
    With these notations, and using the fact that ${\mathcal{Q}}^{2k} \mu_0 = {\mathcal{Q} }^{k} \tilde{\mu}_k$ we can decompose the measure
    \begin{align}\label{Eq:DecompQ2k}
        {\mathcal{Q}}^{2k} \mu_0 = 
         {\mathcal{Q}}^k \left( \tilde{\mu}_{k} \big |_{B} \right) + 
         \sum_{ \Tindexd \in \mathcal{\M}} {\mathcal{Q}}^k \left( \tilde{\mu}_{k} \big |_{T_\Tindexd } \right) \, ,
    \end{align}
    into its components in $B$ and separately in $T_\Tindexd $. 
    We can also write
    \begin{align}\label{Eq:DecompInvk}
         \mu^\star = 
         \sum_{\Tindexd  \in \mathcal{\M} }  \big(c_\Tindexd -\tilde{\mu}_{k}(T_\Tindexd ) \big) \, \mu_{\Tindexd}^\star+    \sum_{\Tindexd \in \mathcal{\M} } \tilde{\mu}_{k}(T_\Tindexd) \, \mu_{\Tindexd}^\star \, ,
    \end{align}
    measuring the discrepancy between the mass on $T_\Tindexd$ and $c_\Tindexd$.

    Substituting \eqref{Eq:DecompQ2k} and \eqref{Eq:DecompInvk} into $\tilde{d}$ and using the inequality \eqref{d_a_split} yields:
      \begin{alignat*}{2}
      \tilde{d} \left(  {\mathcal{Q}}^{2k}  \mu_0, \mu^\star \right) &\leq  
        \mathcal{I}_1 + \mathcal{I}_2 + \mathcal{I}_3 \, , 
    \end{alignat*}
    where
    \begin{align*}
        \mathcal{I}_1 &= {\mathcal{Q}}^{k}  \left( \tilde{\mu}_{k} \big |_{B} \right) (I) \, ,
        \phantom{\left(     \sum_{\Tindexd \in \mathcal{\M}} \right)}
   \\
        \mathcal{I}_2 &=    \sum_{\Tindexd \in \mathcal{\M} }  \big(c_\Tindexd-\tilde{\mu}_{k}(T_\Tindexd) \big) \, \mu_{\Tindexd}^\star (I)  \, , \phantom{\left(     \sum_{\Tindexd \in \mathcal{\M}} \right)}
   \\
        \mathcal{I}_3 &= \tilde{d} \left(     \sum_{\Tindexd \in \mathcal{\M}} {\mathcal{Q}}^k \left( \tilde{\mu}_{k} \big |_{T_\Tindexd} \right) , \sum_{\Tindexd \in \mathcal{\M} } \tilde{\mu}_{k}(T_\Tindexd) \, \mu_{\Tindexd}^\star  \right ) \, .
    \end{align*}
    The result \eqref{Eq:MainEstimate} is then proved provided $\mathcal{I}_i \leq \gamma^{k}$ for each $i = 1,2,3$.

     The first two terms $\mathcal{I}_1, \mathcal{I}_2$ measure the mass of $\mu_0$ that remains in the uniformly transient region $B$ after $k$ iterations of $\mathcal{Q}$. We have, 
    \begin{alignat}{2}\label{Eq:EstI1}
        \mathcal{I}_1 &= \tilde{\mu}_{k} \big |_{B}(I) =
        \tilde{\mu}_{k}(B) \leq \gamma^{k} \, . 
    \end{alignat}
        The first equality in \eqref{Eq:EstI1} follows since $I$ is positive invariant and $\mathcal{Q}$ is a Markov operator; the last inequality follows from \eqref{Eq:Mass_On_B}. 
        
    Similarly, since $\mu_{\Tindexd}^{\star}(I) = 1$ and the $c_\Tindexd$'s sum to one, the second term is 
    \begin{align*}
        \mathcal{I}_2 &= \sum_{\Tindexd \in \mathcal{\M} }  \big(c_\Tindexd-\tilde{\mu}_{k}(T_\Tindexd) \big)  = 1 - \tilde{\mu}_k(T) = \tilde{\mu}_k(B) \leq \gamma^k \, .
    \end{align*}
 Estimating the third term $\mathcal{I}_3$ follows from \cref{Main_thm_basin}\ref{Main_b} which quantifies the convergence of the (normalized) restriction $\tilde{\mu}_k\big|_{T_\Tindexd}$ to $\mu_{\Tindexd}^{\star}$.  In particular, for any non-negative measure $\nu$ supported on $T_{\Tindexd}$ from \cref{Main_thm_basin}\ref{Main_b} we obtain for all $k \geq 0$ and $ \Tindexd \in \mathcal{\M}$:
  \begin{alignat}{2}\label{Con_TiNu}    
        \tilde{d} \big( {\mathcal{Q} }^{k} \nu, \, 
        | \nu | \, \mu_{\Tindexd}^\star\big) &=  
        d_{\alpha_{\Tindexd}}\Big( {\mathcal{P} }^{k \ell_{\Tindexd} + k (\ell- \ell_{\Tindexd}) } \nu, \,  |\nu| \, \mu_{\Tindexd}^\star\Big)  && \\ \nonumber
        &\leq  |\nu| \, \gamma_{\Tindexd}^k \, 
        d_{\alpha_{\Tindexd}} \Big( {\mathcal{P} }^{  k (\ell- \ell_{\Tindexd}) } \frac{\nu}{|\nu|}, \, \mu_{\Tindexd}^\star\Big) && (\textrm{By} \; \eqref{dF_split1} \textrm{ and } \eqref{Con_Tr_Rd} ) \\ \nonumber 
        &\leq  | \nu| \,  \gamma^k \,.   &\qquad&   
    \end{alignat}
    Applying \eqref{Con_TiNu} to $\mathcal{I}_3$ gives
     \begin{alignat*}{2}
        \mathcal{I}_3 &\leq \sum_{\Tindexd \in \mathcal{\M}}  \tilde{d} \left(  {\mathcal{Q}}^k \left( \tilde{\mu}_{k} \big |_{T_\Tindexd} \right) , \, \tilde{\mu}_{k}(T_\Tindexd) \, \mu_{\Tindexd}^\star   \right ) 
        &\qquad & (\textrm{By } \eqref{d_a_tri}) \, , \\
        &\leq \sum_{\Tindexd \in \mathcal{\M} }  \tilde{\mu}_{k}(T_{\Tindexd})  \, \gamma^k 
        && (\textrm{By} \; \eqref{Con_TiNu} \; \textrm{with} \; \nu = \tilde{\mu}_k\big|_{T_\Tindexd}) \, ,\\ 
        &= \tilde{\mu}_{k}(T) \, \gamma^k \leq \gamma^{k} \, . && 
        (\textrm{Since} \; \tilde{\mu}_k(T) \leq 1) \, ,
    \end{alignat*}
    which proves \eqref{Eq:MainEstimate} and hence \eqref{Eq:MainCvgMetric}.
    
    Lastly, in the case where $d=1$ and $I=[a,b]$, note that for any two probability measures $\mu$ and $\nu$,
    \begin{align*}
        d_F(\mu,\nu ) &= \sup_{x \in [a,b]} |\mu([a,x]) -\nu([a,x]) | \\
        &\leq  \sup_{x \in [a,b]}  |\mu([a,x] \cap B ) -\nu([a,x] \cap B ) | + \sum_{\Tindexd \in \mathcal{\M}}  |\mu([a,x] \cap T_{\Tindexd}) -\nu([a,x] \cap T_{\Tindexd})|
        \\
        & \leq \tilde{d}(\mu,\nu) \, ,
    \end{align*}
    since $B \cup \left(\cup_{\Tindexd \in \mathcal{\M}} T_{\Tindexd} \right) =I$ and are disjoint.
\end{proof}

{We conclude with a proof of the Corollary.}
\begin{proof}[Proof of \cref{Coro_unique}] 
    {
    It is sufficient to show that for each dimension $j \in [\dimR]$ the collection of functions $\{f_i^{(j)}\}_{i=1}^n$ admits exactly one interval $T_1^{(j)}$ satisfying \cref{Def:SetsTj}. In this case there is a single set $T_\Tindexd$ and a unique invariant measure.

    Consider $d=1$, with functions $\{f_i\}_{i=1}^n$ on $\mathbb{R}$ and suppose $f_{i^{\star}}$ (for some $i^{\star}$) has a single critical point at $x_0$. From \ref{A3} we then have that 
    \begin{equation}
    \label{Cor_equ}
        (x_0, \infty) \subset L  \qquad \textrm{and} \qquad (-\infty,x_0) \subset R \, .
    \end{equation}
    We claim that every $T_m =[l,r]$ satisfying \cref{Def:SetsTj} must contain $x_0$. Since if $x_0 \notin [l,r]$, then either $l > x_0$ or $r <x_0 $ and \eqref{Cor_equ} implies that either 
      \begin{equation*}
       [l,r] \subset L  \qquad \textrm{or} \qquad [l,r] \subset R \, ,
    \end{equation*}
    which contradicts $l \in \partial L$ and $r \in \partial R$. Thus, $x_0 \in T_m$ and \cref{Prop:FiniteTs}(d) implies that $T_m$ is unique.}     
\end{proof}
 \section{Conclusions and outlook}  
{
In this work we analyze SGD for a class of separable objective functions and  establish a ``Doeblin-type decomposition'' of the state space into a disjoint union of closed absorbing rectangles and a single  uniformly transient set. We show that for each absorbing rectangle $T_{\Tindexd}$ there is a unique invariant measure $\mu_{\Tindexd}$ whose support is contained in $T_{\Tindexd}$, and that the set of all invariant measures is the convex hull of the $\mu_{\Tindexd}$'s. 
Lastly, we show that for arbitrary initializations, SGD converges at a geometric rate to a convex combination of the  $\mu_{\Tindexd}$'s.  Despite the fact that the long time dynamics may be complicated by multiple invariant measures, geometric convergence rules out the possibility of more complicated dynamics for the $\mu_k$ such as periodic orbits.  

One of the motivations for this work is to provide a greater understanding of the constant step-size SGD dynamics as a model for the early stages of  vanishing step-size SGD. For separable functions, \cref{Main_thm_basin} provides the following picture. If one runs SGD in two phases: a first phase with constant steps sizes (long enough to sample the invariant measure $\mu^{\star}$), followed by a second phase with a sufficiently fast vanishing learning rate $\eta_k \rightarrow 0$, the dynamics will effectively perform gradient descent with initial conditions provided by $\mu^{\star}$. The probability of converging to a minimum $x^{\star}$ of $F$ is the mass $\mu^{\star}(A)$ where $A$ is the basin of attraction of $x^{\star}$. 

The fact that SGD may have multiple invariant measures implies that SGD, even for large times, will not in general explore the entire state space.  This may be of concern if one seeks to use SGD to perform global optimization.  For instance, example \ref{Subsec:ExGlobalMinFail} highlights the failure of SGD to perform global optimization: regardless of initialization or $\eta$, the probability laws $\mu_k$ (as $k \rightarrow \infty$) will contain zero mass in the neighborhood of the global minima.  All is not lost, however, as some structure on $f_i$ can be imposed to ensure a unique invariant measure (\cref{Coro_unique} provides a simple condition). In this case, the long time behavior is independent of initialization. 

Another aspect of this work is a cautionary tale of how the diffusion approximation may fail to predict the long-term behavior of SGD. In the diffusion approximation, there is always a small but non-zero probability of transitioning between local minima, and independent of initialization, the dynamics  $\rho(x,t)$  eventually converge to a unique invariant measure. However, as highlighted in examples \ref{Subsec:DoubleWell} and \ref{Subsec:ExGlobalMinFail}, SGD iterates may get trapped in local minima, leading to multiple invariant measures. Thus, in general, the large-time dynamics $\mu_k$  (as $k \rightarrow \infty$) of SGD depend on the initialization $\mu_0$ of SGD.

Lastly, this work serves as a building block for additional work on SGD dynamics.  For instance, \cref{Main_thm_basin} provides a stepping stone to establish a \emph{transition state} theory for SGD (e.g., \cite{AzizianIutzelerMalickMertikopoulos2024}) which can provides additional asymptotics (in $\eta$) on the mass of the invariant measure in the vicinity of local minima of $F$. Another natural question is whether the main result here \cref{Main_thm_basin} generalizes to the case of non-separable functions? An affirmative answer will rule out the possibility of other complex dynamics such as the probability laws $\mu_k$ traversing periodic orbits.  Just as the concept of \emph{basins of attraction} provides a useful theoretic (albeit often computationally intractable) framework for understanding gradient descent dynamics, absorbing sets could provide a similar conceptual theory for understanding the long-time behavior of SGD. }

\section*{Acknowledgments}
The authors would like to thank William Joseph McCann for helpful discussions, and Sean Lawley for highlighting the connection between Infinite Bernoulli convolutions and SGD. {We would also like to thank the anonymous reviewers for comments which led to improvements in the paper.}

This material is based upon work partially supported by the National Science Foundation under Grants No.\ DMS--2012268 and DMS--2309727.  Any opinions, findings, and conclusions or recommendations expressed in this material are those of the authors and do not necessarily reflect the views of the National Science Foundation.

\bibliographystyle{siamplain}
\bibliography{refs}

\appendix

\end{document}